\newcommand{\N}{{\mathbb N}}
\newcommand{\Z}{{\mathbb Z}}
\newcommand{\R}{{\mathbb R}}
\newcommand{\C}{{\mathbb C}}
\newcommand{\e}{\mathrm{e}}
\newtheorem{thm}{Theorem}[section]
\newtheorem{lem}[thm]{Lemma}
\newtheorem{cor}[thm]{Corollary}
\newtheorem{prop}[thm]{Proposition}
\theoremstyle{definition}
\newtheorem{rem}[thm]{Remark}
\newtheorem{dfn}[thm]{Definition}
\title[Dixmier traces of Hankel operators]{Estimating Dixmier traces of Hankel operators in Lorentz ideals}
\author{Magnus Goffeng, Alexandr Usachev}
\address{
Email: goffeng@chalmers.se, usachev@chalmers.se\newline
\indent Department of Mathematical Sciences\newline 
\indent Chalmers University of Technology and \newline 
\indent University of Gothenburg\newline 
\indent SE-412 96 Gothenburg\newline 
\indent Sweden\newline}
\date{\today}
\begin{document}

\begin{abstract}
In this paper we study Dixmier traces of powers of Hankel operators in Lorentz ideals. We extend results of Engli\v{s}-Zhang to the case of powers $p\geq 1$ and general Lorentz ideals starting from abstract extrapolation results of Gayral-Sukochev. In the special case $p=2,4,6$ we give an exact formula for the Dixmier trace. For general $p$, we give upper and lower bounds on the Dixmier trace. We also construct, for any $p$ and any Lorentz ideal, examples of non-measurable Hankel operators. 
\end{abstract}

\maketitle

\section{Introduction}

The construction of Dixmier traces goes back to work of Dixmier \cite{Dixie} who was motivated by the problem of finding a non-normal trace on the von Neumann algebra of bounded operators. Since then Dixmier traces have  taken a prominent role in Connes' program for noncommutative geometry \cite{C_book} and found applications in the analysis of rough structures such as Julia sets \cite{juliaconnes}, limit sets of quasi-Fuchsian groups \cite{quasiconnes} and in complex geometry \cite{EZe,GG0,GG}. The non-normality of the Dixmier trace and the non-separability of its domain of definition makes computations and estimates of Dixmier traces a challenging problem. In this paper we propose a methodology to estimate Dixmier traces of powers of Hankel operators, building on work of Gayral-Sukochev \cite{GS}.

The inspiration for this work is a paper by Engli\v{s}-Zhang \cite{EZ} where Dixmier traces of Hankel operators in the Lorentz ideal $\mathcal{M}_{1,\infty}$ were estimated by means of Besov norms. Recent work in fractal geometry \cite{juliaconnes,quasiconnes} and the questions posed in \cite[Section 7.3]{EZ} lead us to ask for an extension of the estimates in \cite{EZ} to powers $p\geq 1$ and more general Lorentz ideals. The approach we take in this paper differs from that of \cite{EZ}. Our method consists of a rather straightforward application of extrapolation results of Gayral-Sukochev \cite{GS}. 

In the classical examples, naturally appearing physical and geometrical operators are measurable, that is all Dixmier traces take the same value on such operators. An example of a non-measurable pseudo-differential operator with symbol of H\"ormander type $(1,0)$ can be found in \cite[Proposition 11.3.22]{LSZ}. Engli\v{s}-Zhang \cite[Theorem 4]{EZ} constructed a non-measurable Hankel operator from $\mathcal{M}_{1,\infty}$. We show that there are non-measurable Hankel operators in any ($p$-convexified) Lorentz ideal.

Let us summarize our main results in a theorem. For a function $f$ on the circle $S^1$, we let $H_{\bar{f}}$ denote the associated Hankel operator on the Hardy space $H^2(S^1)$ (see more below in Section \ref{hanekopdldl} below). We let $\mathcal{M}^{(p)}_\psi$ denote the $p$-convexification of the Lorentz ideal $\mathcal{M}_\psi$ and $\mathcal{M}^{(p)}_{\psi,0}$ its separable subspace (see more in Subsection \ref{opidelalsos} below), and let $\mathrm{Tr}_{\omega,\psi}: \mathcal{M}_\psi\to \C$ be the Dixmier trace associated with an exponentiation invariant extended limit $\omega$. We write $A\sim B$ if there is a universal constant $C>0$ such that $C^{-1}A\leq B\leq CB$. When saying universal, we still allow for a dependence on $p$ and $\psi$.

\begin{thm}
\label{mainthmintro}
Let $p\geq 1$, $(\|\cdot\|_{B^{1/q}_{q,q},*})_{q\geq p}$ a family of norms on the Besov spaces $B^{1/q}_{q,q}(S^1)$ for $q\geq p$ satisfying the conditions of Corollary \ref{equivbesovgeneerl}, and $\psi:[0,\infty)\to [0,\infty)$ be an increasing concave function with regular variation of index $0$ satisfying $\psi(0)=0$, $\lim_{t\to \infty}\psi(t)=\infty$ and the conditions \eqref{apsicond} and  \eqref{Phi_as1}. Then for any holomorphic function $f$ the following holds:
\begin{enumerate}
\item[I)] $H_{\bar{f}}\in \mathcal{M}_{\psi}^{(p)}$ if and only if $\sup_{q>p}\frac{1}{\psi(\mathrm{e}^{(q-p)^{-1}})} \|f\|_{B^{1/q}_{q,q},*}^q<\infty$.
\item[II)] For any exponentiation invariant extended limit $\omega$, 
$$\mathrm{Tr}_{\omega,\psi}(|H_{\bar{f}}|^p)\sim\lim_{q-p\to  \tilde{\omega}}\frac{1}{\psi(\mathrm{e}^{(q-p)^{-1}})} \|f\|_{B^{1/q}_{q,q},*}^q.$$
Here $\tilde{\omega}$ is defined as in Equation \eqref{omegatildedef} (see page \pageref{omegatildedef}). 
\item[III)] It holds that
$$\mathrm{d}_{\mathcal{M}_\psi}(|H_{\bar{f}}|^p,\mathcal{M}_{\psi,0}):=\inf_{A\in \mathcal{M}_{\psi,0}}\||H_{\bar{f}}|^p-A\|_{\mathcal{M}_\psi}\sim\limsup_{q\searrow p} \frac{1}{\psi(\mathrm{e}^{(q-p)^{-1}})}\|f\|_{B^{1/q}_{q,q},*}^q$$ 
\end{enumerate}
Moreover, if $\psi$ satisfies that $A_\psi(\alpha)\neq 1$ for some $\alpha>1$ (see Equation \eqref{apsicond}), there are holomorphic functions $f\in \cap_{q>p} B^{1/q}_{q,q}(S^1)$ such that $|H_{\bar{f}}|^p\in \mathcal{M}_\psi$ is non-measurable.
\end{thm}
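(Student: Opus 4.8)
The plan is to manufacture a holomorphic symbol $f$ whose frequencies sit on widely separated lacunary blocks, arranged so that the function
$$c(t):=\frac{1}{\psi(t)}\int_0^t\mu_s(|H_{\bar f}|^p)\,\mathrm ds,$$
equivalently the quantity $g(q):=\psi(\e^{(q-p)^{-1}})^{-1}\|f\|_{B^{1/q}_{q,q},*}^q$ from parts I--III, oscillates as $t\to\infty$ (resp.\ $q\searrow p$) between saturating the bound in part I and dropping to $0$; since $\mathrm{Tr}_{\omega,\psi}(|H_{\bar f}|^p)$ tracks $g$ through part II, this forces non-measurability. I would first fix $\alpha_0>1$ with $A:=A_\psi(\alpha_0)\neq1$; then $A>1$ since $\psi$ is increasing, and iterating \eqref{apsicond} gives $\psi(t^{\alpha_0^n})/\psi(t)\to A^n$, so $\psi(t^C)/\psi(t)$ is unbounded as $C\to\infty$. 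The hypothesis on $A_\psi$ enters only through this: over a scale range where no new frequencies occur, $\int_0^t\mu_s(|H_{\bar f}|^p)\,\mathrm ds$ is frozen while $\psi(t)$ grows by the factor $\psi(t^C)/\psi(t)$, so $c$ decays all the way to $0$ along such a range precisely because this factor is unbounded.

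For the construction I would take reals $u_1<u_1'<u_2<u_2'<\cdots$ growing faster than any fixed tower of exponentials (pinned down at the end), and set $f=\sum_jf_j$ with $f_j$ a lacunary polynomial $\sum_i a^{(j)}_i z^{\nu^{(j)}_i}$ whose frequencies $\nu^{(j)}_i$ form a geometric progression exhausting the window $[\e^{u_j},\e^{u_j'}]$ and whose coefficients satisfy $|a^{(j)}_i|^p\asymp\psi'(\nu^{(j)}_i)$. Using the standard relation between the singular values of a lacunary Hankel operator and block $L^q$-norms --- equivalently, part I itself --- one gets $\int_0^t\mu_s(|H_{\bar f}|^p)\,\mathrm ds\asymp\psi(t)$ on the ``full'' windows $[\e^{u_j},\e^{u_j'}]$ (after a transient that closes once $\psi(t)$ exceeds $\psi(\e^{u_j})$ by a large factor), while this integral stays frozen on the ``empty'' windows $[\e^{u_j'},\e^{u_{j+1}}]$. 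Thus $\sup_{q>p}g(q)<\infty$, so $H_{\bar f}\in\mathcal M_\psi^{(p)}$ by part I, while $c\asymp1$ on a large portion of each full window and $c(\e^{u_{j+1}})\asymp\psi(\e^{u_j'})/\psi(\e^{u_{j+1}})\to0$ provided $u_{j+1}=C_ju_j'$ with $C_j\to\infty$ (so that $\psi(\e^{u_{j+1}})/\psi(\e^{u_j'})=\psi((\e^{u_j'})^{C_j})/\psi(\e^{u_j'})\to\infty$). Finally $f\in\bigcap_{q>p}B^{1/q}_{q,q}(S^1)$, since for fixed $q>p$ one has $\|f_j\|_{B^{1/q}_{q,q}}^q\asymp\sum_i\nu^{(j)}_i\psi'(\nu^{(j)}_i)^{q/p}\asymp\e^{-u_j(q-p)/p+o(u_j)}$, which is summable in $j$; and $f$ is holomorphic, all frequencies being non-negative.

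For non-measurability itself I would use that the full windows $[u_j,u_j']$ and the empty windows $[u_j',u_{j+1}]$ can simultaneously be made long on the scale seen by an exponentiation invariant extended limit (this only forces $(u_j)$ to grow faster than any fixed tower, consistently with everything above), so that there are exponentiation invariant extended limits $\omega_1,\omega_2$ whose associated $\tilde\omega_1,\tilde\omega_2$ (in the sense of \eqref{omegatildedef}) concentrate on the empty windows, resp.\ on the full windows. Then part II gives $\mathrm{Tr}_{\omega_1,\psi}(|H_{\bar f}|^p)\sim\lim_{q-p\to\tilde\omega_1}g(q)=0$, hence this trace is $0$, whereas $\mathrm{Tr}_{\omega_2,\psi}(|H_{\bar f}|^p)\sim\lim_{q-p\to\tilde\omega_2}g(q)\asymp1$, hence positive; so $|H_{\bar f}|^p$ is non-measurable. (This is consistent with part III, which gives $\mathrm d_{\mathcal M_\psi}(|H_{\bar f}|^p,\mathcal M_{\psi,0})\sim\limsup_{q\searrow p}g(q)\asymp1>0$, placing $|H_{\bar f}|^p$ outside $\mathcal M_{\psi,0}$.)

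The hard part will be the second paragraph: producing one holomorphic symbol whose modified Besov norms $\|\cdot\|_{B^{1/q}_{q,q},*}$ (equivalently, whose Hankel singular values) have the prescribed behaviour uniformly for $q$ in a right neighbourhood of $p$, which means running the lacunary block estimates and their interaction with the equivalence of Corollary~\ref{equivbesovgeneerl} with constants independent of $q$. A secondary, more bookkeeping-level difficulty is to check that the growth of $(u_j)$ required to make the windows long on the exponentiation invariant scale is compatible with both $\sup_{q>p}g(q)<\infty$ and $g\to0$ on the empty windows; this is the point at which $A_\psi(\alpha_0)\neq1$ is indispensable, since it is what keeps the decay factor $\psi(\e^{u_j'})/\psi(\e^{u_{j+1}})$ able to reach $0$ over windows that are still long on that scale.
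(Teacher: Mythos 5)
Your proposal addresses only the final ``Moreover'' clause (non-measurability); that is reasonable since parts I--III are, in the paper too, just a restatement of Corollary~\ref{equivbesovgeneerl}, which is in turn a direct application of the Gayral--Sukochev extrapolation machinery (Theorem~\ref{allthethingsGSsaid}, Corollary~\ref{equivalencescor}) together with Peller's theorem.

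For the non-measurability, your route is recognizably the same in spirit as the paper's -- a lacunary holomorphic symbol whose $\psi$-averaged singular data oscillates, with $A_\psi(\alpha)\neq 1$ exploited precisely as you say (so that $\psi(t^C)/\psi(t)$ can be made large by taking $C$ large, allowing the ratio to decay to $0$ over frequency-free stretches) -- but the concrete implementation differs. You use a bang-bang block construction: geometric progressions of frequencies on ``full'' windows $[\e^{u_j},\e^{u_j'}]$ with $|a_\nu|^p\asymp\psi'(\nu)$, and nothing on ``empty'' windows. The paper instead fixes the classical dyadic lacunary shape $\gamma_{p,c}(z)=\sum_j 2^{-j/p}c_jz^{2^j}$ once and for all, transfers the Dixmier trace to the sequence ideal $\mathfrak m^{(p)}_{\tilde\psi}(\N)$ with $\tilde\psi(t)=\psi(2^t-1)$ (which has regular variation of index $k_\psi\neq 0$), produces the coefficients $c_n=(|\bar g(n)+C|\,\tilde\psi'(n))^{1/p}$ by solving the integral equation $h=\tfrac1{\tilde\psi}\int_0^\cdot g\,\tilde\psi'$ for a smooth non-convergent $h(t)=h_0(\log(1+\log(1+t)))$, and then applies the invariant Hahn--Banach theorem on the space $\mathcal E$ of almost translation-invariant functions (Proposition~\ref{proponlimif}) to manufacture the two exponentiation-invariant extended limits. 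Your square-wave choice is essentially the degenerate case $h_0=\chi$ of this scheme. What the paper's smooth route buys is that all the transient estimates you flag as ``the hard part'' are packaged into Lemma~\ref{integracom} and Propositions~\ref{solvingceseq}--\ref{gsatisfiesmean}, with the regular-variation calculus (notably $\tilde\psi'/\tilde\psi\sim k_\psi/t$) doing the work uniformly, and the whole thing works for any bounded non-convergent $h_0\in C^{1,1}$.

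The one genuine gap in your sketch is the sentence asserting that ``there are exponentiation invariant extended limits $\omega_1,\omega_2$ whose associated $\tilde\omega_1,\tilde\omega_2$ concentrate on the empty windows, resp.\ on the full windows.'' That is exactly the content that needs a proof, and ``make the windows long'' is not by itself an argument: one must exhibit a linear functional that is a state, vanishes on $c_0$, is invariant under the full group $\{P_\alpha:\alpha\geq1\}$, and takes the prescribed value. The paper does this by the change of variable $\omega=\eta\circ\exp\circ\exp_2$ (which converts exponentiation invariance of $\omega$ to translation invariance of $\eta$, see Equation~\eqref{expinvariance}) and then applies the invariant Hahn--Banach theorem on $\mathcal E$; you would need a dilation-invariant Hahn--Banach step of the same nature, and you would also need to verify that the data you are evaluating actually lies in the invariant subspace so that the extension step is legitimate. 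Once that is supplied, and the block transient estimates are written out with constants uniform in $q$ near $p$ (as required by Corollary~\ref{equivbesovgeneerl}), your construction should close.
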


Since we only consider $p$:th powers of operators, our results extend mutatis mutandis to $0<p<1$. We restrict our attention to $p\geq 1$ in order to avoid quasi-normed Banach spaces.

In Section \ref{prelsection} we provide an overview of the theory of Lorentz ideals from an extrapolation point of view. The general form of Theorem \ref{mainthmintro} will be considered in Section \ref{hanekopdldl}. We consider the special case $p=2,4,6$ of Theorem \ref{mainthmintro} in Section \ref{p246sec} where a result of Janson-Upmeier-Wallsten allows us to give exact formulas for the Dixmier trace $\mathrm{Tr}_{\omega,\psi}(|H_{\bar{f}}|^p)$. Finally, in Section \ref{Non-meas} we construct holomorphic functions $f\in \cap_{q>p} B^{1/q}_{q,q}(S^1)$ such that $|H_{\bar{f}}|^p\in \mathcal{M}_\psi$ is non-measurable.\\

\noindent {\bf Acknowledgements:}
We are grateful to Genkai Zhang for interesting discussions and particularly for introducing the work \cite{JUW} to us. We also thank Fedor Sukochev and Evgeniy Semenov for helpful comments on the distance formula to the separable part of a Lorentz ideal. We also acknowledge support from the Swedish Research Council Grant 2015-00137 and Marie Sklodowska Curie Actions, Cofund, Project INCA 600398. This work was finalized at the Erwin Schr\"odinger Institute in Vienna during the program on ``Bivariant $K$-theory in Geometry and Physics".

\section{Lorentz spaces and extrapolation}
\label{prelsection}

We will in this section provide an overview of Lorentz ideals and Hankel operators. Most results in this section can be found in the literature, and the remainder of the results in this section are variations on well known results.

\subsection{Operator ideals}
\label{opidelalsos}

The operators we will consider in this paper will in general belong to some ideal of operators on a Hilbert space. The general theory of operator ideals is well defined starting from a semi-finite von Neumann algebra. While this introduces some additional technicalities, it will allow us to treat ideal of operators on the same footing as $L^p$-spaces on a measure space. We will not go through this theory beyond its salient points. The reader is referred to \cite{LSZ} for a more thorough presentation.

Let $\mathcal{N}$ denote a semi-finite von Neumann algebra and $\tau$ a normal, faithful, semi-finite tracial weight on $\mathcal{N}$. The two main examples to keep in mind are $\mathcal{N}=\mathbb{B}(\mathcal{H})$ -- the bounded operators on a separable Hilbert space -- with $\tau$ being the operator trace and $\mathcal{N}=L^\infty(X,\mu)$ with $\tau(a):=\int_Xa\mathrm{d}\mu$ for a $\sigma$-finite measure space $(X,\mu)$. By definition, a von Neumann algebra is a weak operator closed $*$-subalgebra of $\mathbb{B}(\mathcal{H})$ for a Hilbert space $\mathcal{H}$. We will tacitly assume that $\mathcal{N}$ has a separable pre-dual, which is equivalent to $\mathcal{H}$ being separable. For any closed densely defined positive operator $T$ affiliated with $\mathcal{N}$, we define its singular value function 
\begin{align*}
\mu_T(t):&=\inf\{\|PT\|_{\mathcal{N}}: P\in \mathcal{N} \,\mbox{a projection with} \;\tau(1-P)\leq t\}=\\
&=\inf\{s\geq 0: \tau(\chi_{[s,\infty)}(T))\leq t\}
\end{align*}
There is a rich theory of so called symmetrically normed operator ideals, see more in \cite[Chapter 3]{LSZ}, which carries over to the theory of ideals in $L^\infty(0,\infty)$ by means of the singular value function. We are mainly interested in the following two classes. \\

{\bf $L^p$-spaces.} The noncommutative $L^p$-space $\mathcal{L}^p$ is defined as the set of operators affiliated with $\mathcal{N}$ such that $\mu_T\in L^p(0,\infty)$. The space $\mathcal{L}^p$ is a symmetrically normed operator ideal, in particular a Banach space, in the norm 
$$\|T\|_{\mathcal{L}^p}:=\|\mu_T\|_{L^p(0,\infty)}.$$

In the case that $\mathcal{N}=\mathbb{B}(\mathcal{H})$, we write $\mathcal{L}^p(\mathcal{H})$ for the associated noncommutative $L^p$-space. The space $\mathcal{L}^p(\mathcal{H})$ coincides with the $p$:th Schatten ideal with the same norm. 

In the case that $\mathcal{N}=L^\infty(X,\mu)$ with $\tau(a):=\int_Xa\mathrm{d}\mu$ for a $\sigma$-finite measure space $(X,\mu)$, it holds that $\mathcal{L}^p=L^p(X,\mu)$ with the same norm. \\

{\bf Lorentz ideals.} Let $\psi:[0,\infty)\to [0,\infty)$ be an increasing concave function with $\psi(0)=0$, $\lim_{t\to\infty} \psi(t)=\infty$. For later purposes of Dixmier trace computations, we often assume a condition which is slightly stronger than that in the original Dixmier paper. This condition is that the limit
\begin{equation}
\label{apsicond}
A_{\psi}(\alpha):=\lim_{t\to \infty} \frac{\psi(t^\alpha)}{\psi(t)}\quad\mbox{exists for all $\alpha>1$}. 
\end{equation}
Since $\psi$ is increasing, $A_\psi(\alpha)\geq 1$ for all $\alpha$. Condition \eqref{apsicond} guarantees that $\psi$ has regular variation of index $0$. Recall that a function $\psi$ has regular variation of index $\rho\in \R$ if 
$$\lim_{t\to\infty}\frac{\psi(\lambda t)}{\psi(t)}=\lambda^\rho, \quad\forall \lambda>0.$$
By \cite[Theorem 1.8.2]{RegVar} we can without restrictions assume that $\psi$ is smooth. For the purpose of extrapolation results, the following condition on $\psi$ often comes into play:
\begin{equation}
\label{Phi_as1}
\|\psi'\|_p \le C \psi(e^\frac1{p-1}), \ \forall p>1.
\end{equation}

We define the Lorentz ideal $\mathcal{M}_\psi$ to consist of operators affiliated with $\mathcal{N}$ such that 
$$\|T\|_{\mathcal{M}_\psi}:=\sup_{t>0}\frac{1}{\psi(t)}\int_0^t\mu_T(s)\mathrm{d}s<\infty.$$
The norm $\|\cdot\|_{\mathcal{M}_\psi}$ makes $\mathcal{M}_\psi$ into a symmetrically normed operator ideal. 

If the function $\psi$ satisfies condition~\eqref{apsicond}, the ideal $\mathcal{M}_\psi$ carries a plethora of singular traces, with Dixmier traces being those of most relevance to this paper. For $\alpha\geq 1$ we define $P_\alpha:L^\infty(0,\infty)\to L^\infty(0,\infty)$ by $P_\alpha f(t):=f(t^\alpha)$. If $\omega\in L^\infty(0,\infty)^*$ is a state satisfying that $\omega(f)=0$ if $\lim_{t\to \infty}f(t)=0$ we say that $\omega$ is an extended limit at $\infty$. By an abuse of notation, we write $\lim_{t\to\omega} f(t):=\omega(f)$ for an extended limit $\omega$ and $f\in L^\infty(0,\infty)$. If $\omega=\omega\circ P_\alpha$ for all $\alpha\geq 1$, we say that $\omega$ is an exponentiation invariant extended limit. Associated with an exponentiation invariant extended limit $\omega$ there is a Dixmier trace $\mathrm{Tr}_{\omega,\psi}:\mathcal{M}_\psi\to \C$ defined by 
$$\mathrm{Tr}_{\omega,\psi}(T):=\lim_{t\to \omega} \frac{1}{\psi(t)}\int_0^t \mu_T(t)\mathrm{d}t,$$
for positive $T\in \mathcal{M}_\psi$ and extending to $\mathcal{M}_\psi$ by linearity (see \cite[Proposition 1.12]{GS} for the proof).

The $p$:th convexification $\mathcal{M}_\psi^{(p)}$ is defined as the set of operators $T$ for which $|T|^p\in \mathcal{M}_\psi$; it is normed by $\|T\|_{\mathcal{M}_\psi^{(p)}}:=\||T|^p\|_{\mathcal{M}_\psi}^{1/p}$. The separable part $\mathcal{M}_{\psi,0}^{(p)}$ is defined as the closure in $\mathcal{M}_\psi^{(p)}$ of the finite trace operators in $\mathcal{N}$. 

The most studied example of Lorentz ideals comes from the function $\psi(t):=\log(1+t)$. In this case, one often writes $\mathcal{M}_{1,\infty}:=\mathcal{M}_\psi$ and $\mathcal{M}_{p,\infty}:=\mathcal{M}_\psi^{(p)}$. The reader should note that in \cite{EZ}, the Lorentz ideal $\mathcal{M}_{1,\infty}$ associated with $\mathcal{N}=\mathbb{B}(\mathcal{H})$ is denoted by $\mathcal{S}^{Dixm}$.

\subsection{Technical results on extrapolation and Dixmier traces}

The following result takes its starting point in work of Gayral-Sukochev \cite{GS}. The first statement is found in~\cite[Theorem 3.3]{GS} and the second statement in ~\cite[Proposition 2.17]{GS}. The third statement will be proven below, and is inspired by work of Engli\v{s}-Zhang \cite{EZ}.

\begin{thm}
\label{allthethingsGSsaid}
Let $\psi: [0,\infty)\to [0,\infty)$ be an increasing concave function satisfying \eqref{apsicond} and \eqref{Phi_as1},
$\psi(0)=0$, $\lim_{t\to\infty} \psi(t)=\infty$. We set $k_\psi := \log(A_{\psi}(e))$. 
\begin{enumerate} 
\item[(i)] For any exponentiation invariant extended limit $\omega\in (L^\infty)^*$ and $T\in \mathcal{M}_\psi^{(p)}$, the formula
$$\mathrm{Tr}_{\omega,\psi}(|T|^p)=\frac1{\Gamma(1+k_\psi)}\cdot\lim_{h\to  \tilde{\omega}} \frac{1}{\psi(\mathrm{e}^{1/h})}\|T\|_{p+h}^{p+h}$$
holds where $\tilde{\omega}\in (L_\infty)^*$ is the extended limit at $0$ given by 
\begin{equation}
\label{omegatildedef}
\lim_{h\to  \tilde{\omega}} x(t) := \lim_{t\to \omega} x(\frac1{\log(t)}).
\end{equation}
\item[(ii)] For any $T\in \mathcal{N}$,  
$$\|T\|_{ \mathcal{M}_\psi^{(p)}}\sim  \sup_{h>0} \frac{1}{\psi(\mathrm{e}^{1/h})}\|T\|_{p+h}^{p+h}.$$
In particular, $T\in  \mathcal{M}_\psi^{(p)}$ if and only if $\|T\|_{p+h}^{p+h}=O(\psi(\mathrm{e}^{1/h}))$.
\item[(iii)] Assume that 
$\mathcal{N}$ is atomic. For any $T\in \mathcal{M}_\psi^{(p)}$, we have that 
\begin{align*}
\limsup_{h\searrow 0} \frac{1}{\psi(\mathrm{e}^{1/h})}\|T\|_{p+h}^{p+h}&\leq \mathrm{d}_{\mathcal{M}_\psi}(|T|^p,\mathcal{M}_{\psi,0})\leq \mathrm{e} \limsup_{h\searrow 0} \frac{1}{\psi(\mathrm{e}^{1/h})}\|T\|_{p+h}^{p+h}.
\end{align*}
\end{enumerate}
\end{thm}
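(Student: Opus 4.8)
The plan is to derive (iii) from two ingredients: a formula for the distance from a positive operator to the separable part $\mathcal M_{\psi,0}$ of $\mathcal M_\psi$, and a Tauberian estimate relating that distance to the $\zeta$-type quantities of (i)--(ii). Since $\mathcal M_\psi$, $\mathcal M_{\psi,0}$ and all functionals in play depend only on singular values, I may assume $\mathcal N=\mathbb B(\mathcal H)$. Put $A:=|T|^p\in\mathcal M_\psi$, write $\mu:=\mu_A$ and $B(t):=\int_0^t\mu(s)\,\mathrm{d}s$, so $\|A\|_{\mathcal M_\psi}=\sup_{t>0}B(t)/\psi(t)$, and set $L:=\limsup_{t\to\infty}B(t)/\psi(t)$. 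By Theorem~\ref{allthethingsGSsaid}(i)--(ii) the quantity $\tfrac1{\psi(\mathrm{e}^{1/h})}\|T\|_{p+h}^{p+h}$ of the statement is, up to the normalisation used there, of the form $\tfrac1{\psi(\mathrm{e}^{1/h})}\int_0^\infty\mu(s)^{1+h}\,\mathrm{d}s$, and it is this object that I shall estimate.

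Step 1 (distance formula): $\mathrm{d}_{\mathcal M_\psi}(A,\mathcal M_{\psi,0})=L$. One identifies $\mathcal M_{\psi,0}$ with $\{A\ge 0:\,B(t)/\psi(t)\to 0\ \text{as}\ t\to\infty\}$: finite-rank operators lie in the right-hand set, which is closed since $\tfrac1{\psi(t)}\int_0^t\mu_{A-A'}\le\|A-A'\|_{\mathcal M_\psi}$; conversely an $A$ in that set is approximated in $\mathcal M_\psi$-norm by its rank-$N$ truncations $A_N$, using $\mu_{A-A_N}(s)=\mu(s+N)$ and the regular variation of $\psi$ (so $\psi(2t)/\psi(t)\to 1$) to control $\|A-A_N\|_{\mathcal M_\psi}=\sup_{t>0}\tfrac{B(N+t)-B(N)}{\psi(t)}$. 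The inequality $L\le\mathrm{d}_{\mathcal M_\psi}(A,\mathcal M_{\psi,0})$ is then immediate from Hardy--Littlewood submajorisation ($\tfrac{B(t)}{\psi(t)}\le\|A-b\|_{\mathcal M_\psi}+\tfrac1{\psi(t)}\int_0^t\mu_b$ for $b\in\mathcal M_{\psi,0}$, then let $t\to\infty$), and the reverse inequality follows from the truncation estimate by absorbing $\limsup_N$ into $\limsup_{t\to\infty}$, once more via slow variation. (If the distance-to-the-separable-part formula for Marcinkiewicz ideals is available in the literature, one just cites it.)

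Step 2 (Tauberian comparison): it remains to prove
\[
\limsup_{h\searrow 0}\frac1{\psi(\mathrm{e}^{1/h})}\int_0^\infty\mu(s)^{1+h}\,\mathrm{d}s\ \le\ L\ \le\ \mathrm{e}\,\limsup_{h\searrow 0}\frac1{\psi(\mathrm{e}^{1/h})}\int_0^\infty\mu(s)^{1+h}\,\mathrm{d}s .
\]
For the left inequality, integrate by parts: $\int_0^\infty\mu(s)^{1+h}\,\mathrm{d}s=\int_0^\infty B(s)\,(-\mathrm{d}\mu(s)^h)$ (boundary terms vanish because $\mu(s)\le\|A\|_{\mathcal M_\psi}\psi(s)/s$ and $\psi$ is slowly varying); splitting off a fixed head $[0,t_0]$, whose contribution is $o(\psi(\mathrm{e}^{1/h}))$, and using $B(s)\le(L+\varepsilon)\psi(s)$ on the tail, one integrates by parts back to get $\int_0^\infty\mu(s)^{1+h}\,\mathrm{d}s\le o(\psi(\mathrm{e}^{1/h}))+(L+\varepsilon)\int_0^\infty\mu(s)^h\psi'(s)\,\mathrm{d}s$; a Karamata computation (substitute $\tau=-\log\mu(s)$ and use $\psi(\mu^{-1}(\mathrm{e}^{-\tau}))\le(1+o(1))\psi(\mathrm{e}^\tau)$, where the regular variation of $\psi$ and condition~\eqref{Phi_as1} enter) shows the last integral is $(1+o(1))\psi(\mathrm{e}^{1/h})$, and $\varepsilon\to 0$ finishes it. For the right inequality, the elementary bound $\mu(s)\le\mu(s)^{1+h}\mu(t)^{-h}$ for $0\le s\le t$ gives $B(t)\le\mu(t)^{-h}\int_0^\infty\mu(s)^{1+h}\,\mathrm{d}s$ for all $h>0$; the choice $h=h(t):=-1/\log\mu(t)$ makes $\mu(t)^{-h(t)}=\mathrm{e}$ and $\mathrm{e}^{1/h(t)}=1/\mu(t)$, so that
\[
\frac{B(t)}{\psi(t)}\le\mathrm{e}\cdot\frac{\psi(1/\mu(t))}{\psi(t)}\cdot\frac1{\psi(\mathrm{e}^{1/h(t)})}\int_0^\infty\mu(s)^{1+h(t)}\,\mathrm{d}s ,
\]
and since $\mu(t)\le\|A\|_{\mathcal M_\psi}\psi(t)/t$ forces $1/\mu(t)\ge t/(\|A\|_{\mathcal M_\psi}\psi(t))$, the middle factor is $1+o(1)$ by regular variation while $h(t)\searrow 0$; taking the $\limsup$ along a sequence $t\to\infty$ realising $L$ completes Step 2, and combining with Step 1 proves (iii).

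The hard part is the right inequality of Step 2. When $\mu(t)$ decays strictly faster than $t^{-1}$, the factor $\psi(1/\mu(t))/\psi(t)$ above need not be $1+o(1)$, and one must instead apply the estimate at the last scale $t'\le t$ with $\mu(t')\ge 1/t'$, controlling $B(t)-B(t')=\int_{t'}^t\mu\le\log(t/t')$ via $A\in\mathcal M_\psi$ and slow variation; arranging this book-keeping together with the Karamata estimates built on~\eqref{Phi_as1} so that exactly the constant $\mathrm{e}$ emerges --- it ultimately comes from the identity $x^{-1/\log(1/x)}=\mathrm{e}$ for $0<x<1$ --- is the technical heart of the argument.
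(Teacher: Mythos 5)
Your Step~1 is essentially the paper's Lemma~\ref{prop7.5}: identify $\mathcal M_{\psi,0}$ with the elements whose weighted Ces\`aro means of singular values tend to zero, then compute the distance via tail truncations. The paper reduces to the commutative half-line by a rearrangement-preserving embedding (from \cite{Chi_Suk}) and carries out the truncation argument there, but the content is the same and your sketch is sound.

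Your Step~2, however, takes a genuinely different route from the paper's Lemma~\ref{prop7}, and it is there that your proposal has gaps. For the direction $L\le\mathrm e\limsup_h$ you bound $B(t)\le\mu(t)^{-h}\int\mu^{1+h}$ and then pick the $\mu$-dependent parameter $h(t)=-1/\log\mu(t)$. As you yourself flag, the resulting factor $\psi(1/\mu(t))/\psi(t)$ is only controlled from \emph{below} by the submajorisation bound $\mu(t)\le\|A\|_{\mathcal M_\psi}\psi(t)/t$; when $\mu$ decays strictly faster than $1/t$ this factor can diverge, and the ``apply at the last scale $t'$ with $\mu(t')\ge1/t'$'' patch is not carried out. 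The paper avoids this entirely with a cleaner choice: from H\"older, $\int_0^t H\le\|H\|_{1+q}\,t^{q/(1+q)}\le\|H\|_{1+q}\,t^{q}$, and then one takes the \emph{$\mu$-independent} value $q=1/\log t$, so that $t^{q}=\mathrm e$ and $\psi(\mathrm e^{1/q})=\psi(t)$ on the nose; no case analysis on the decay rate of $\mu$ is needed, and the constant $\mathrm e$ comes out in one line. This is the main thing missing from your write-up.

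For the direction $\limsup_h\le L$ you integrate by parts twice and arrive at $\int_0^\infty\mu(s)^h\psi'(s)\,\mathrm ds$, which you then claim is $(1+o(1))\psi(\mathrm e^{1/h})$ by ``a Karamata computation.'' This bound is plausible for specific $\psi$ (e.g.\ $\psi=\log$), but as stated it is not an immediate consequence of \eqref{Phi_as1}: the substitution $\tau=-\log\mu(s)$ presupposes invertibility of $\mu$, and the needed inequality $\psi(\mu^{-1}(\mathrm e^{-\tau}))\le(1+o(1))\psi(\mathrm e^{\tau})$ is again a statement about $\mu$, not just about $\psi$. The paper sidesteps this by replacing $H$ with a function $G$ that equals $H$ on $[t_0,\infty)$, is $\le C\psi'$ on $[0,t_0]$, and is therefore \emph{globally} submajorised by $C\psi'$; the Hardy--Littlewood--P\'olya inequality then gives $\|G\|_{1+h}\le C\|\psi'\|_{1+h}$, and \eqref{Phi_as1} is applied once, to $\psi'$ alone, with no Karamata-type estimate on $\mu$ required. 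I would recommend adopting that device rather than trying to justify your $(1+o(1))$ bound directly.

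One further small point: you cite Theorem~\ref{allthethingsGSsaid}(i)--(ii) to pass from $\|T\|_{p+h}^{p+h}$ to $\int_0^\infty\mu(s)^{1+h}\,\mathrm ds$, but that identification is just the spectral theorem plus the reduction $\psi\rightsquigarrow\psi^{p}$, $\mu_T\rightsquigarrow\mu_T^{p}$ used in Lemma~\ref{prop7}; invoking (i)--(ii) is unnecessary and introduces the normalisation constant $1/\Gamma(1+k_\psi)$ which plays no role in (iii).
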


Before proving the third statement of this theorem, we need two lemmas. The following result is an extension of \cite[Proposition 7]{EZ}.

\begin{lem}
\label{prop7}
Let $\psi: [0,\infty)\to [0,\infty)$ be an increasing concave function satisfying the conditions \eqref{apsicond} and \eqref{Phi_as1} and moreover that
$\psi(0)=0$, $\lim_{t\to\infty} \psi(t)=\infty$. 
For a function $f\in \cap_{0<h<\delta}L^{p+h}(0,\infty)$ for some $\delta>0$ we define the quantities
$$\|f\|_{p,\limsup} := \limsup_{h\searrow0} \frac{\|f\|_{p+h}^{1+h}}{\sqrt[p]{\psi(\mathrm{e}^{\frac{1}{h}})}}\quad
\mbox{and} \quad 
\|f\|_{p, \lim \psi} := \limsup_{t\to \infty} \sqrt[p]{\frac{1}{\psi(t)}\int_0^t |f(s)|^p \mathrm{d}s}$$
It then holds that 
$$\|f\|_{p, \lim \psi}\leq \|f\|_{p,\limsup}\leq \mathrm{e} \|f\|_{p, \lim \psi}.$$
\end{lem}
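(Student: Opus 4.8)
\emph{Setup and degenerate cases.} The plan is to mimic the proof of \cite[Proposition 7]{EZ}, now keeping track of the exponent $p$ and replacing the function $\log$ appearing there by the general slowly varying $\psi$. As replacing $f$ by $|f|$ changes neither quantity, I assume $f\geq 0$ and set $\Phi(t):=\int_0^t f(s)^p\,\mathrm{d}s$. The degenerate cases are quickly dealt with: if $\Phi$ is bounded then $\|f\|_{p,\lim\psi}=0$, and the Hölder estimate below forces $\|f\|_{p,\limsup}=0$ as well; if $\Phi(t)/\psi(t)$ is unbounded, that same estimate gives $\|f\|_{p,\limsup}=\infty$. So I may assume $0<\|f\|_{p,\lim\psi}<\infty$, whence $\Phi(t)\to\infty$ and $\|f\|_{p+h}\to\infty$ as $h\searrow 0$. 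Since $\psi$ is slowly varying --- equivalently $\log\psi(t)=o(\log t)$ --- one has $\psi(\mathrm{e}^{1/h})^{h}\to 1$, and likewise $\Phi(\mathrm{e}^{1/h})^{h}\to 1$ and $\|f\|_{p+h}^{\,h}\to 1$; these elementary facts allow me to interchange $\|f\|_{p+h}^{1+h}$ with $\|f\|_{p+h}$, and $\Phi(t)^{1+h}$ with $\Phi(t)$, inside all the limits below.

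\emph{First inequality $\|f\|_{p,\lim\psi}\le\|f\|_{p,\limsup}$.} The starting point is Hölder's inequality on $[0,t]$ with conjugate exponents $\tfrac{p+h}{p}$ and $\tfrac{p+h}{h}$:
\[
\Phi(t)\ \le\ \Big(\int_0^t f(s)^{p+h}\,\mathrm{d}s\Big)^{\frac{p}{p+h}}\,t^{\frac{h}{p+h}}\ \le\ \|f\|_{p+h}^{\,p}\,t^{\frac{h}{p+h}},
\]
which rearranges to $\|f\|_{p+h}^{\,p+h}\ge t^{-h/p}\,\Phi(t)^{(p+h)/p}$ for all $t>0$ and $0<h<\delta$. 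Feeding in a suitable choice $h=h(t)\searrow 0$ attached to a large $t$ realizing the $\limsup$ defining $\|f\|_{p,\lim\psi}$ --- so that $\mathrm{e}^{1/h}$ is comparable to a fixed power of $t$ --- and using the regular variation of $\psi$ to compare $\psi(t)$ with $\psi(\mathrm{e}^{1/h})$, one arrives at the claimed bound. The delicate point is that this couples a $\limsup$ over $t\to\infty$ with a $\limsup$ over $h\searrow 0$; reconciling the two, and in particular squeezing out the sharp constant $1$, is carried out as in \cite{EZ} by means of the uniform convergence theorem for regularly varying functions.

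\emph{Second inequality $\|f\|_{p,\limsup}\le\mathrm{e}\,\|f\|_{p,\lim\psi}$.} Fix $\varepsilon>0$ and set $C:=\|f\|_{p,\lim\psi}^{\,p}+\varepsilon$, so that $\Phi(t)\le C\psi(t)$ for all large $t$. Writing $\|f\|_{p+h}^{\,p+h}=\int_0^\infty f(s)^{p}\,f(s)^{h}\,\mathrm{d}s$ and using $f(s)^{p}=\Phi'(s)$ a.e., a dyadic decomposition together with an integration by parts reduces $\|f\|_{p+h}^{\,p+h}$ to integrals of the shape $\int_0^\infty\psi(s)^{a}s^{-b}\,\mathrm{d}s$ and $\int_0^\infty\psi(s)^{a}s^{-b}\psi'(s)\,\mathrm{d}s$. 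These are controlled by the growth condition \eqref{Phi_as1} on $\psi'$, the regular variation $\psi(\mathrm{e}^{p/h})\sim p^{k_\psi}\psi(\mathrm{e}^{1/h})$ coming from \eqref{apsicond}, and the identity $t^{1/\log t}=\mathrm{e}$; it is the last of these that produces the constant $\mathrm{e}$ once the integral is split at the level $t\sim\mathrm{e}^{1/h}$. Taking the $\limsup$ over $h\searrow 0$ and letting $\varepsilon\searrow 0$ then gives the inequality.

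\emph{Main obstacle.} The computations themselves are routine; the real work lies in (i) keeping the multiplicative constants sharp, i.e.\ getting the constant $1$ in the first inequality and precisely $\mathrm{e}$ --- rather than a larger power of $\mathrm{e}$, or a $p$- and $\psi$-dependent constant --- in the second, and (ii) legitimately interchanging the two limiting processes, which is exactly where the hypotheses \eqref{apsicond} and \eqref{Phi_as1} come in.
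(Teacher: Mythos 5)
There is a genuine obstruction in your argument for the ``first inequality,'' and it points to a sign-error in the statement itself that your sketch inherits. The H\"older computation you set up gives, with $h=1/\log t$,
\[
\int_0^t |f|^p \le \|f\|_{p+h}^{\,p}\, t^{h/(p+h)} \le \|f\|_{p+h}^{\,p}\, \mathrm{e}^{1/p},
\]
and the factor $\mathrm{e}^{1/p}$ is not an artifact: it is exactly $t^{1/\log t}=\mathrm{e}$ surfacing, independently of $\psi$. No appeal to the uniform convergence theorem for regularly varying functions --- which only governs the quotient $\psi(\lambda t)/\psi(t)$ --- can remove it. So H\"older yields $\|f\|_{p,\lim\psi}\le \mathrm{e}\,\|f\|_{p,\limsup}$ and \emph{not} $\|f\|_{p,\lim\psi}\le \|f\|_{p,\limsup}$. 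This is precisely what the paper's proof obtains: after reducing to $p=1$ (by replacing $\psi$ with $\psi^p$, a cleaner device than your $p$-bookkeeping), H\"older gives $\|H\|_{1,\lim\psi}\le \mathrm{e}\,\|H\|_{1,\limsup}$, and a separate argument gives the reverse estimate $\|H\|_{1,\limsup}\le \|H\|_{1,\lim\psi}$ with constant $1$. In other words, the displayed inequality in the lemma has the two quantities interchanged; the chain actually established, and the one used in Theorem \ref{allthethingsGSsaid}(iii), is $\|f\|_{p,\limsup}\le \|f\|_{p,\lim\psi}\le \mathrm{e}\,\|f\|_{p,\limsup}$.

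Your sketch for the ``second inequality'' also diverges from the paper, and is too vague to assess as written. The paper's route for the constant-$1$ bound $\|H\|_{1,\limsup}\le\|H\|_{1,\lim\psi}$ does not involve any dyadic decomposition or integration by parts. Instead, one observes that $\int_0^t H\le C\psi(t)$ for $t\ge t_0$ makes the function $G$ (equal to $H$ on $[t_0,\infty)$ and to $\min\{H,C\psi'\}$ on $[0,t_0)$) Hardy--Littlewood submajorized by $C\psi'$; then $\|G\|_{1+h}\le C\|\psi'\|_{1+h}$, and condition \eqref{Phi_as1} converts this into $\|G\|_{1+h}\lesssim C\psi(\mathrm{e}^{1/h})$. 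The correction on $[0,t_0]$ is seen to be negligible in the limit $h\searrow 0$ using monotone convergence. This submajorization step is the genuinely new ingredient relative to \cite{EZ}, and it is the place where \eqref{Phi_as1} is actually used; your sketch does not reproduce it and instead aims at a different, incorrect, placement of the constant $\mathrm{e}$.
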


\begin{proof}
If $\psi$ satisfies the conditions \eqref{apsicond} and \eqref{Phi_as1}, then so does $\psi^{1/p}$ for any $p\geq 1$. Indeed condition \eqref{apsicond} is readily verified for $\psi^{1/p}$ and condition \eqref{Phi_as1} for $\psi^{1/p}$ follows from that $\psi$ has regular variation of index $0$ and \cite[Proposition 2.17 and 2.23]{GS}. We can therefore replace $f$ by $H:=|f|^p\geq 0$ and $\psi$ by $\psi^p$, and thusly assume that $p=1$. For any $C>\|H\|_{1,\limsup}$ there is $q_0>0$ such that
$$\frac{\|H\|_{1+h}}{\psi(\mathrm{e}^\frac{1}{h})} <C, \quad\mbox{for}\; 0<h<q_0.$$

Using the H\"older inequality, for any $0<q<q_0$ we obtain
\begin{align*}
\int_0^t H(s) \mathrm{d} s &\le \left( \int_0^t H(s)^{1+q} \mathrm{d}s \right)^\frac1{1+q} \left( \int_0^t  \mathrm{d}s \right)^\frac{q}{1+q}\\
&\le C \cdot \psi(\mathrm{e}^\frac1{q})\cdot t^\frac{q}{1+q} \le C \cdot\psi(\mathrm{e}^\frac1{q})\cdot t^q.
\end{align*}

If $t>\mathrm{e}^{1/q_0}$ (that is, $q_0>1/\log t$), one can take $q=1/\log t$. Thus,
$$\int_0^t H(s) \mathrm{d}s \le C \mathrm{e} \cdot\psi(t), \quad\mbox{for}\;  t>\mathrm{e}^{1/q_0}.$$
Therefore,
$$\|H\|_{1,\lim \psi} \le \mathrm{e} \|H\|_{1,\limsup}.$$

Conversely for $C> \|H\|_{1,\lim \psi}$ there exists $t_0>0$ such  that
\begin{equation}
\label{eq1}
\frac1{\psi(t)} \int_0^t H(s) \mathrm{d}s \le C, \quad\forall t\ge t_0.
\end{equation}

Equivalently,
$$\int_0^t H(s) \mathrm{d}s \le \int_0^t C \psi'(s) \mathrm{d}s, \quad \forall t\ge t_0.$$

For the function 
$$G(t):=\begin{cases}
H(t), &t\ge t_0\\
\min\{H(t), C \psi'(t)\}, &t<t_0.
\end{cases}$$
we clearly have
$$\int_0^t G(s) \mathrm{d}s \le \int_0^t C \psi'(s) \mathrm{d}s,\quad \forall t> 0.$$

This means that the function $G$ is submajorised by the function $C \psi'$ (in the sense of Hardy-Littlewood). Thus, for every $h>0$ one has
$$\int_0^\infty G(s)^{1+h} \mathrm{d}s \le \int_0^\infty (C \psi'(s))^{1+h} \mathrm{d}s.$$
Since the function $\psi$ satisfies~\eqref{Phi_as1}, it follows that
$$\int_0^\infty G(s)^{1+h} \mathrm{d}s \le C^{1+h} (\psi(\mathrm{e}^{\frac{1}{h}}))^{1+h},$$
or, equivalently,
\begin{equation}
\label{G}
\limsup_{h\searrow0} \frac{\|G\|_{1+h}}{\psi(\mathrm{e}^{\frac{1}{h}})} \le C.
\end{equation}

First, 
$$\frac{1}{\psi(\mathrm{e}^{\frac{1}{h}})} \left(\int_0^{t_0} G(s)^{1+h} \mathrm{d}s\right)^{\frac{1}{1+h}} \le \frac{1}{\psi(\mathrm{e}^{\frac{1}{h}})} \left(\int_0^{t_0} (C \psi'(s))^{1+h} \mathrm{d}s\right)^{\frac{1}{1+h}} \mathop{\longrightarrow}\limits_{h\searrow0} 0,$$
since $\psi(\infty)=\infty$ and $\psi'\in L^{1+h}(0,\infty)$ for every $h>0$.

Second, by Lebesque Monotone Convergence Theorem and~\eqref{eq1} we obtain
$$\left(\int_0^{t_0} H(s)^{1+h} \mathrm{d}s\right)^{\frac{1}{1+h}} \mathop{\longrightarrow}\limits_{h\searrow0}\int_0^{t_0} H(s) \mathrm{d}s \le C \psi(t_0).$$

Therefore,
$$\limsup_{h\searrow0} \frac{1}{\psi(\mathrm{e}^{\frac{1}{h}})} \left(\int_0^{t_0} G(s)^{1+h} \mathrm{d}s\right)^{\frac{1}{1+h}} = \limsup_{h\searrow0}\frac{1}{\psi(\mathrm{e}^{\frac{1}{h}})} \left(\int_0^{t_0} H(s)^{1+h} \mathrm{d}s\right)^{\frac{1}{1+h}}=0.$$

Since $H(t)=G(t)$ for $t\ge t_0$, it follows from~\eqref{G} that
$$\limsup_{h\searrow0} \frac{\|H\|_{1+h}}{\psi(\mathrm{e}^{\frac{1}{h}})}\le C.$$
This proves that 
$$\|H\|_{1,\limsup} \le  \|H\|_{1,\lim \psi}.$$
\end{proof}

 The following result is well-known at least in the commutative setting (see e.g. \cite[Proposition 2.1]{DPSSS}. For the convenience of the reader we provide a short proof.

\begin{lem}
\label{prop7.5}
Let $\psi: [0,\infty)\to [0,\infty)$ be an increasing concave function satisfying the conditions \eqref{apsicond} and \eqref{Phi_as1} and moreover that
$\psi(0)=0$, $\lim_{t\to\infty} \psi(t)=\infty$. Assume that $\mathcal{N}$ is atomic. For any $T\in \mathcal{M}_\psi^{(p)}$, we have that 
$$\mathrm{d}_{\mathcal{M}_\psi}(|T|^p,\mathcal{M}_{\psi,0})= \limsup_{t\to \infty}  \frac1{\psi(t)}\int_0^t \mu_T(s)^p \mathrm{d}s.$$
\end{lem}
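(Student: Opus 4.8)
Set $S:=|T|^p\geq 0$; since $\mu_{|T|^p}=\mu_T^p$, the statement is equivalent to showing, for an arbitrary positive $S\in\mathcal{M}_\psi$, that $\mathrm{d}_{\mathcal{M}_\psi}(S,\mathcal{M}_{\psi,0})=L$, where $L:=\limsup_{t\to\infty}\frac1{\psi(t)}\int_0^t\mu_S(s)\,\mathrm{d}s$; we may assume $\tau(1)=\infty$, the finite case being trivial. I would prove this in two stages: a reduction of the distance to a one-parameter family of ``tail norms'', followed by a direct computation of that family.

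For the reduction, note that $\mathcal{M}_{\psi,0}$ is the $\|\cdot\|_{\mathcal{M}_\psi}$-closure of the finite-trace operators, and that the finite-rank operators are $\|\cdot\|_{\mathcal{M}_\psi}$-dense in the latter, so $\mathrm{d}_{\mathcal{M}_\psi}(S,\mathcal{M}_{\psi,0})=\inf\{\|S-A\|_{\mathcal{M}_\psi}:A\text{ finite rank}\}$. Since $\mathcal{N}$ is atomic, write $S=\sum_k\lambda_k p_k$ with $(\lambda_k)$ nonincreasing and $p_k$ pairwise orthogonal minimal projections, put $E_n:=\sum_{k\leq n}p_k$, $c_n:=\tau(E_n)\nearrow\infty$ and $A_n:=E_nS$. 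Then $\mu_{S-A_n}(s)=\mu_S(s+c_n)$, while for any finite-rank $A$ with $\tau(\mathrm{supp}\,|A|)\leq c_n$ the inequality $\mu_{s+u}(X+Y)\leq\mu_s(X)+\mu_u(Y)$ applied to $S=(S-A)+A$ gives $\mu_{S-A_n}(s)=\mu_S(s+c_n)\leq\mu_{S-A}(s)$, hence $\|S-A_n\|_{\mathcal{M}_\psi}\leq\|S-A\|_{\mathcal{M}_\psi}$. As $c_n\to\infty$ every finite-rank $A$ is eventually caught, and the sequence $\|S-A_n\|_{\mathcal{M}_\psi}$ is nonincreasing, so
$$\mathrm{d}_{\mathcal{M}_\psi}(S,\mathcal{M}_{\psi,0})=\lim_{n\to\infty}\|S-A_n\|_{\mathcal{M}_\psi}=\lim_{n\to\infty}\sup_{t>0}\frac1{\psi(t)}\int_{c_n}^{c_n+t}\mu_S(s)\,\mathrm{d}s.$$

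It remains to show this limit equals $L$. For the bound ``$\leq$'', fix $\varepsilon>0$ and $T_0$ with $\int_0^t\mu_S\leq(L+\varepsilon)\psi(t)$ for all $t\geq T_0$: for $t\geq T_0$, monotonicity of $\mu_S$ gives $\int_{c_n}^{c_n+t}\mu_S\leq\int_0^t\mu_S\leq(L+\varepsilon)\psi(t)$, and for $0<t<T_0$ one has $\int_{c_n}^{c_n+t}\mu_S\leq t\,\mu_S(c_n)\leq\mu_S(c_n)\,\tfrac{T_0}{\psi(T_0)}\,\psi(t)$, using that $t\mapsto t/\psi(t)$ is nondecreasing (concavity of $\psi$, $\psi(0)=0$); since $S\in\mathcal{M}_\psi$ forces $\mu_S(c_n)\to0$, the small-$t$ term is $o(\psi(t))$ uniformly, so $\limsup_n\sup_{t>0}\frac1{\psi(t)}\int_{c_n}^{c_n+t}\mu_S\leq L+\varepsilon$. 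For ``$\geq$'' (trivial when $L=0$), fix $L'<L$ and a sequence $s_k\nearrow\infty$ with $\int_0^{s_k}\mu_S\geq L'\psi(s_k)$; for each $n$, using that $\int_0^{c_n}\mu_S<\infty$ while $\psi(s_k)\to\infty$, pick $k(n)$ with $s_{k(n)}>c_n$ and $\int_0^{c_n}\mu_S<\psi(s_{k(n)})/n$, and evaluate the supremum at $t=s_{k(n)}-c_n$, using $\psi(s_{k(n)}-c_n)\leq\psi(s_{k(n)})$, to get a value at least $L'-1/n$; letting $n\to\infty$ and then $L'\nearrow L$ gives ``$\geq$''.

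I expect the ``$\leq$'' estimate to be the main obstacle, since the supremum in $\|\cdot\|_{\mathcal{M}_\psi}$ runs over all $t>0$ and the small-$t$ regime---where $\psi(t)\to0$---has to be controlled separately; this is precisely where one needs both $\mu_S(c_n)\to0$ and the monotonicity of $t/\psi(t)$. The ``$\geq$'' bound is a routine diagonal argument, and the reduction step is standard once the optimality of spectral truncations among finite-rank approximants is used.
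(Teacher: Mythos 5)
Your proof is correct, and it takes a genuinely different route from the paper's. The paper transfers the problem to the commutative setting $(0,\infty)$ via the rearrangement-preserving isometric embedding of \cite{Chi_Suk}, works with $\mathcal{M}'_{\psi,0}$ (the closure of elements with compactly supported singular value functions), estimates $\lim_{n}\|x^p\chi_{[n,\infty)}\|_{\mathcal{M}_\psi}$ by a case analysis on near-maximizing points $t_n$, and invokes the submajorization theorem \cite[Theorem II.3.1]{KPS} for the lower bound; it also imposes the auxiliary hypothesis $\lim_{t\to 0} t/\psi(t)=0$ to handle the case $\liminf_n t_n=0$. You instead stay inside the atomic von Neumann algebra, establish directly that the spectral truncation $A_n=E_nS$ is optimal among finite-rank approximants with support trace $\leq c_n$ via the Fan-type inequality $\mu_{s+u}(X+Y)\leq\mu_s(X)+\mu_u(Y)$, and then compute $\lim_n\|S-A_n\|_{\mathcal{M}_\psi}$ by splitting the supremum into $t\geq T_0$ and $t<T_0$; the small-$t$ regime is controlled by monotonicity of $t/\psi(t)$ together with $\mu_S(c_n)\to 0$, which sidesteps the extra $\lim_{t\to 0}t/\psi(t)=0$ assumption. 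Your lower bound is a direct diagonal choice of evaluation points rather than an appeal to submajorization. The one step you wave at is the density of finite-rank operators in $\mathcal{M}_{\psi,0}$ (defined as the closure of finite-trace operators); this does hold -- one approximates a finite-trace positive $B$ by its spectral truncations $B_n$ and checks $\|B-B_n\|_{\mathcal{M}_\psi}\to 0$ by exactly the same two-regime estimate you use for $S$, with $\int_{c_n}^\infty\mu_B\to 0$ controlling the large-$t$ part -- but it is worth a sentence since the paper circumvents it by working with $\mathcal{M}'_{\psi,0}$. Overall your argument is more elementary and self-contained, at the cost of being tied to the atomic case; the paper's detour through $(0,\infty)$ yields the more general identity \eqref{disisfifit}.
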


Let $\mathcal{M}'_{\psi,0}$ denote the norm closure of the space of elements $T\in \mathcal{M}_\psi$ with compactly supported singular value function. The assumption that $\mathcal{N}$ is atomic ensures that $\mathcal{M}_{\psi,0}=\mathcal{M}_{\psi,0}'$. Our proof will in fact consist of showing that for a general $\mathcal{N}$, it holds that 
\begin{equation}
\label{disisfifit}
\mathrm{d}_{\mathcal{M}_\psi}(|T|^p,\mathcal{M}_{\psi,0}')= \limsup_{t\to \infty}  \frac1{\psi(t)}\int_0^t \mu_T(s)^p \mathrm{d}s, \quad\forall T\in \mathcal{M}_\psi^{(p)}
\end{equation}
for any function $\psi$ additionally satisfying $\lim_{t\to0} \frac{t}{\psi(t)}=0$. Since the original statement is for atomic $\mathcal{N}$, we can always guarantee that this condition holds.

\begin{proof}
It follows from \cite{Chi_Suk} that for every $T\in \mathcal{M}_\psi^{(p)}$ there exists a rearrangement-preserving (and thus, isometric) embedding $i_T$ of $\mathcal{M}_\psi^{(p)}(0,\infty)$ into $\mathcal{M}_\psi^{(p)}$ such that $i_T(\mu(T))=T$. Thus, following the argument in \cite[Page 267]{CRSS}, it is sufficient to prove the formula \eqref{disisfifit} for every $x=\mu(x) \in \mathcal{M}_\psi^{(p)}(0,\infty)$.

For every $x=\mu(x) \in \mathcal{M}_\psi^{(p)}(0,\infty)$ and every $n\in\mathbb N$ the function $x^p\chi_{(0,n)} \in \mathcal{M}'_{\psi,0}(0,\infty)$. Hence, for every $n\in\mathbb N$ we have
$$\mathrm{d}_{\mathcal{M}_\psi(0,\infty)}(x^p,\mathcal{M}'_{\psi,0}(0,\infty))=\mathrm{d}_{\mathcal{M}_\psi(0,\infty)}(x^p\chi_{[n,\infty)},\mathcal{M}'_{\psi,0}(0,\infty)).$$

Therefore,
\begin{align*}
\mathrm{d}_{\mathcal{M}_\psi(0,\infty)}(x^p,\mathcal{M}'_{\psi,0}(0,\infty))
&\le\lim_{n\to\infty}\|x^p\chi_{[n,\infty)}\|_{\mathcal{M}_\psi(0,\infty)}\\
&=\lim_{n\to\infty}\sup_{t>0} \frac{1}{\psi(t)} \int_0^t \mu(x^p\chi_{[n,\infty)})(s) \ \mathrm{d}s\\
&=\lim_{n\to\infty}\sup_{t>0} \frac{1}{\psi(t)} \int_0^t (x(s+n))^p \ \mathrm{d}s\\
&=\lim_{n\to\infty}\sup_{t>0} \frac{1}{\psi(t)} \int_n^{t+n} (x(s))^p \ \mathrm{d}s.
\end{align*}

By the definition of supremum for every $n\in \N$ there exists $t_n>0$ such that 
$$\sup_{t>0} \frac{1}{\psi(t)} \int_n^{t+n} (x(s))^p \ \mathrm{d}s \le \frac{1}{\psi(t_n)} \int_n^{t_n+n} (x(s))^p \ \mathrm{d}s +\frac1n.$$

Denote for brevity
$$a:=\limsup_{t\to \infty}  \frac1{\psi(t)}\int_0^t (x(s))^p \mathrm{d}s.$$

1. If $\limsup_{n\to\infty} t_n=\infty$, then
$$\mathrm{d}_{\mathcal{M}_\psi(0,\infty)}(x^p,\mathcal{M}'_{\psi,0}(0,\infty))\le \lim_{n\to\infty}\sup_{t>n} \frac{1}{\psi(t)} \int_n^{t+n} (x(s))^p \ \mathrm{d}s\le a,$$
since $x=\mu(x)$.

2. If $0 < \liminf_{n\to\infty} t_n \le \limsup_{n\to\infty} t_n<\infty$, then
$$\frac{1}{\psi(t_n)} \int_n^{t_n+n} (x(s))^p \ \mathrm{d}s \le \frac{t_n (x(n))^p}{\psi(t_n)}\mathop{\longrightarrow}\limits_{n\to\infty} 0,$$
since $x=\mu(x)$ and $x(n)\to 0$ as $n\to\infty$.
Hence, $\mathrm{d}_{\mathcal{M}_\psi(0,\infty)}(x^p,\mathcal{M}'_{\psi,0}(0,\infty))=0\le a.$

3. If $0 = \liminf_{n\to\infty} t_n \le \limsup_{n\to\infty} t_n<\infty$, then
$$\frac{1}{\psi(t_n)} \int_n^{t_n+n} (x(s))^p \ \mathrm{d}s \le \frac{t_n (x(n))^p}{\psi(t_n)}\mathop{\longrightarrow}\limits_{n\to\infty} 0,$$
since $x$ is bounded and $\frac{t}{\psi(t)}\to 0$ as $t\to0$.
Hence, $\mathrm{d}_{\mathcal{M}_\psi(0,\infty)}(x^p,\mathcal{M}'_{\psi,0}(0,\infty))=0\le a.$

On the other hand, for every $x=\mu(x) \in \mathcal{M}_\psi^{(p)}(0,\infty)$ and $y\in \mathcal{M}'_{\psi,0}(0,\infty)$ by \cite[Theorem II.3.1]{KPS} we have
\begin{align*}
\mathrm{d}_{\mathcal{M}_\psi(0,\infty)}(x^p,\mathcal{M}'_{\psi,0}(0,\infty)) &=\|x^p-y\|_{\mathcal{M}_\psi(0,\infty)}\ge \|\mu(x^p)-\mu(y)\|_{\mathcal{M}_\psi(0,\infty)}\\
&\ge \limsup_{t\to \infty}  \frac1{\psi(t)}\int_0^t (\mu(x^p)-\mu(y))(s) \mathrm{d}s\\
&= \limsup_{t\to \infty}  \frac1{\psi(t)}\int_0^t \mu(x^p)(s) \mathrm{d}s,
\end{align*}
since $y\in \mathcal{M}'_{\psi,0}(0,\infty)$. This proves the assertion.
\end{proof}

\begin{proof}[Proof of third statement in Theorem \ref{allthethingsGSsaid}]
Set $f=\mu_T$. Assuming that $\mathcal{N}$ is atomic, Lemma \ref{prop7.5} ensures that $\mathrm{d}_{\mathcal{M}_\psi}(|T|^p,\mathcal{M}_{\psi,0})= \|f\|_{p,\lim \psi}^p$. By definition, 
$$\limsup_{h\searrow 0} \frac{1}{\psi(\mathrm{e}^{1/h})}\|T\|_{p+h}^{p+h}=\|f\|_{p,\limsup}^p.$$ 
We conclude the inequality stated in the third statement of Theorem \ref{allthethingsGSsaid} from Lemma \ref{prop7}.
\end{proof}

The aspect of Theorem \ref{allthethingsGSsaid} relevant to this paper lies in its implications on Hankel operators. To formalize this, we state an immediate corollary of Theorem \ref{allthethingsGSsaid}. If $(X_h)_{h\in [0,1]}$ is a family of Banach spaces with $X_h\subseteq X_{h'}$ continuously for $h<h'$, we define the extrapolation space $X_\psi\subseteq \cap_{h\in (0,1]}X_h$ to be the set of all elements $x\in\cap_{h\in (0,1]}X_h$ for which 
$$\|x\|_{X_\psi}:=\sup_{h>0} \frac{1}{\psi(\mathrm{e}^{1/h})}\|x\|^{1+h}_{X_h}<\infty.$$

\begin{cor}
\label{equivalencescor}
Let $\psi: [0,\infty)\to [0,\infty)$ be an increasing concave function satisfying the conditions \eqref{apsicond} and \eqref{Phi_as1} and moreover that
$\psi(0)=0$, $\lim_{t\to\infty} \psi(t)=\infty$. Consider the following data:
\begin{itemize}
\item A family of Banach spaces $(X_h)_{h\in [0,1]}$ with $X_h\subseteq X_{h'}$ continuously for $h<h'$.
\item A mapping $T:X_1\to \mathcal{L}^{p+1}$ restricting to a continuous mapping $T_h:=T|_{X_h}:X_h\to \mathcal{L}^{p+h}$, for $h\in [0,1]$, such that there are measurable functions 
$$c_0,c_1:[0,1]\to [r,R],\quad \mbox{for some $0<r\leq R<\infty$},$$ 
with 
$$c_0(h)\|x\|_{X_h}\leq \|T_h(x)\|_{\mathcal{L}^{p+h}}\leq c_1(h)\|x\|_{X_h},\quad \forall h\in [0,1], \; x\in X_h.$$
\end{itemize} 
Then $T$ defines a continuous mapping $T:X_\psi \to \mathcal{M}_\psi^{(p)}$ such that 
\begin{enumerate} 
\item[A)] For any exponentiation invariant extended limit $\omega\in (L_\infty)^*$ 
$$\lim_{h\to  \tilde{\omega}} \frac{c_0(h)^{p}}{\psi(\mathrm{e}^{1/h})}\|x\|_{X_{h}}^{p+h}\leq \mathrm{Tr}_{\omega,\psi}(|T(x)|^p)\leq \lim_{h\to  \tilde{\omega}} \frac{c_1(h)^p}{\psi(\mathrm{e}^{1/h})}\|x\|_{X_{h}}^{p+h},$$
where $\tilde{\omega}$ is defined as in Equation \eqref{omegatildedef}.
In particular, if $\lim_{h\to 0}\frac{c_0(h)}{c_1(h)}=1$, then 
$$\mathrm{Tr}_{\omega,\psi}(|T(x)|^p)= \lim_{h\to  \tilde{\omega}} \frac{c_0(h)^p}{\psi(\mathrm{e}^{1/h})}\|x\|_{X_{h}}^{p+h}.$$
\item[B)] For any $x\in X_\psi$ we have that 
$$r\|x\|_{X_\psi}\leq \|T(x)\|_{\mathcal{M}_\psi^{(p)}}\leq R\|x\|_{X_\psi}.$$
\item[C)] Assume that $\mathcal{N}$ is atomic. For any $x\in X_\psi$ we have that 
$$r\limsup_{h\searrow 0} \frac{1}{\psi(\mathrm{e}^{1/h})}\|x\|^{p+h}_{X_h}\leq \mathrm{d}_{\mathcal{M}_\psi^{(p)}}(|T(x)|^p,\mathcal{M}_{\psi,0})\leq \mathrm{e}R\limsup_{h\searrow 0} \frac{1}{\psi(\mathrm{e}^{1/h})}\|x\|^{p+h}_{X_h}.$$
\end{enumerate}
\end{cor}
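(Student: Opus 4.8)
The plan is to read off all three assertions from Theorem~\ref{allthethingsGSsaid}, transporting its statements along the two-sided comparison between the norms $\|\cdot\|_{X_h}$ and the noncommutative $L^{p+h}$-norms. First I would note that the hypothesis $T_h=T|_{X_h}$, together with the inclusions $X_h\subseteq X_{h'}$, means that for $x\in X_\psi\subseteq\cap_{h\in(0,1]}X_h$ the operators $T_h(x)$ coincide and define a single $T(x)\in\cap_{h\in(0,1]}\mathcal{L}^{p+h}$ (when $\mathcal{N}$ is atomic this is automatic, since then $\mathcal{L}^{p+h}\subseteq\mathcal{L}^{p+h'}$). Raising the given inequality $c_0(h)\|x\|_{X_h}\le\|T(x)\|_{\mathcal{L}^{p+h}}\le c_1(h)\|x\|_{X_h}$ to the power $p+h$ and dividing by $\psi(\mathrm{e}^{1/h})$ produces
$$\frac{c_0(h)^{p+h}}{\psi(\mathrm{e}^{1/h})}\|x\|_{X_h}^{p+h}\ \le\ \frac{1}{\psi(\mathrm{e}^{1/h})}\|T(x)\|_{p+h}^{p+h}\ \le\ \frac{c_1(h)^{p+h}}{\psi(\mathrm{e}^{1/h})}\|x\|_{X_h}^{p+h},\qquad h\in(0,1],$$
and this one chain of inequalities is what feeds A), B) and C).

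For B), applying $\sup_{h>0}$ and Theorem~\ref{allthethingsGSsaid}(ii): the right-hand side is $O(\psi(\mathrm{e}^{1/h}))$ by the definition of $\|\cdot\|_{X_\psi}$ (using $c_1(h)\le R$), so $T(x)\in\mathcal{M}_\psi^{(p)}$ and $T:X_\psi\to\mathcal{M}_\psi^{(p)}$ is well defined, while the two-sided form with $r\le c_i(h)\le R$ gives $r\|x\|_{X_\psi}\le\|T(x)\|_{\mathcal{M}_\psi^{(p)}}\le R\|x\|_{X_\psi}$. For A) I would apply $\lim_{h\to\tilde{\omega}}$ to the same chain and substitute the formula for $\mathrm{Tr}_{\omega,\psi}(|T(x)|^p)$ from Theorem~\ref{allthethingsGSsaid}(i); since $\tilde{\omega}$ is a positive extended limit the inequalities are preserved, and the ``in particular'' clause is the squeeze forced by $c_0(h)/c_1(h)\to1$. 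Statement C) is obtained in the same way with $\limsup_{h\searrow0}$ in place of $\lim_{h\to\tilde{\omega}}$ and Theorem~\ref{allthethingsGSsaid}(iii) in place of (i), atomicity of $\mathcal{N}$ being carried over from that hypothesis in (iii) and the factor $\mathrm{e}$ on the upper estimate being exactly the one appearing there.

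The only step that is not purely formal --- and the one I expect to require care --- is replacing the constants $c_i(h)^{p+h}$ coming out of the previous display by the constants $c_i(h)^{p}$ that occur in A) and C). The mechanism is that $r\le c_i(h)\le R$ forces $r^{h}\le c_i(h)^{h}\le R^{h}$, hence $c_i(h)^{h}\to1$ as $h\searrow0$; writing $c_i(h)^{p+h}\,G_i(h)=G_i(h)+(c_i(h)^{h}-1)G_i(h)$ with $G_i(h):=c_i(h)^{p}\psi(\mathrm{e}^{1/h})^{-1}\|x\|_{X_h}^{p+h}$ bounded, the error term tends to $0$ and is therefore annihilated by the extended limit $\tilde{\omega}$ and invisible to $\limsup_{h\searrow0}$. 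This is what makes A) and C) come out with the exponent $p$, whereas in B) one instead keeps the full supremum and absorbs the bounds $r\le c_i(h)\le R$ into the outer constants. A final bookkeeping point: Theorem~\ref{allthethingsGSsaid}(ii) is an equivalence only up to a universal constant and (i) carries the normalization $\Gamma(1+k_\psi)^{-1}$, so B) and the non-strict reading of A) must be understood with the paper's convention of comparison up to constants depending only on $p$ and $\psi$; granting that, the statement contains nothing beyond Theorem~\ref{allthethingsGSsaid} itself.
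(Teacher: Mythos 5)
Your proposal is correct and coincides with the paper's own approach: the paper disposes of this corollary in the remark immediately following it, observing that after replacing $\|\cdot\|_{X_h}$ with the equivalent norm $\|T(\cdot)\|_{\mathcal{L}^{p+h}}$ one may take $c_0=c_1=1$, at which point the statement is read directly off Theorem~\ref{allthethingsGSsaid}. Your explicit handling of the passage from $c_i(h)^{p+h}$ to $c_i(h)^p$ via $c_i(h)^h\to 1$, and your remark that the $\Gamma(1+k_\psi)^{-1}$ normalization from Theorem~\ref{allthethingsGSsaid}(i) and the comparison constant from (ii) must be folded into the paper's ``up to universal constants'' convention, simply carries out the bookkeeping the paper's remark leaves implicit (modulo the small typo $c_i(h)^{p+h}G_i(h)$, which should read $c_i(h)^{h}G_i(h)$).
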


\begin{rem}
In the setup of Corollary \ref{equivalencescor}, we note that the norms $\|x\|_{X_h}':=\|T(x)\|_{\mathcal{L}^{p+h}}$ on $X_h$ are equivalent to the norms $\|\cdot \|_{X_h}$. After this change of norms, we can take $c_0=c_1=1$ in which case Corollary \ref{equivalencescor} is a trivial reformulation of Theorem \ref{allthethingsGSsaid}. \emph{The relevance of Corollary \ref{equivalencescor} lies in that it is often possible to estimate the norms $\|x\|_{X_h}$ in situations where it is not possible to estimate $\|T(x)\|_{\mathcal{L}^{p+h}}$.} We will utilize this fact below for Hankel operators. 
\end{rem}

\begin{rem}
In part A of Corollary \ref{equivalencescor}, we can obtain equivalences that are independent of $\omega$. Indeed the upper and lower bounds on $c_0$ and $c_1$ implies that under the assumptions of of Corollary \ref{equivalencescor}, 
$$r\lim_{h\to  \tilde{\omega}} \frac{1}{\psi(\mathrm{e}^{1/h})}\|x\|_{X_{h}}^{p+h}\leq \mathrm{Tr}_{\omega,\psi}(|T(x)|^p)\leq R\lim_{h\to  \tilde{\omega}} \frac{1}{\psi(\mathrm{e}^{1/h})}\|x\|_{X_{h}}^{p+h}$$
\end{rem}

\section{Hankel operators and Peller's characterization}
\label{hanekopdldl}

We now turn our focus to Hankel operators on the Hardy space. The reader can recall that the Hardy space $H^2(S^1)\subseteq L^2(S^1)$ is defined as the subspace of functions with a holomorphic extension to the interior of the unit disc. We here consider $S^1$ to be the boundary of the unit disc in the complex plane. The orthogonal projection $P:L^2(S^1)\to H^2(S^1)$ is called the Szeg\"o projection. For $f\in L^\infty(S^1)$, the associated Hankel operator is defined as 
$$H_f:=(1-P)fP.$$
Clearly, if $f$ is the restriction of holomorphic function in the unit disc, $H_f=0$. In fact, $H_f$ is a well defined bounded operator for $f\in BMO(S^1)$. The space of symbols $f$ for which $H_f\in \mathcal{L}^p(L^2(S^1))$ has been characterized in terms of Besov spaces by Peller \cite{peller}. We let $B^{1/p}_{p,p}(S^1)$ denote the Besov space on $S^1$, we will review this space and various equivalent norms on this space below. 

For now we fix a particular choice of norms on the scale of Besov space defined in terms of Littlewood-Paley theory. Let $W_0:=1$ and for $n\in \N_+$ we define 
$$W_n(z):=\sum_{k=2^{n-1}}^{2^{n+1}} \min\left(\frac{k-2^{n-1}}{2^n-2^{n-1}},\frac{2^{n+1}-k}{2^{n+1}-2^{n}}\right) (z^k+z^{-k}).$$
The polynomials $W_n$ are characterized by the property that their Fourier coefficients $(\hat{W}_n(k))_{k\in \Z}$ is linearly interpolating between $\hat{W}_n(2^{n-1})=\hat{W}_n(2^{n+1})=0$, $\hat{W}_n(2^n)=1$ and $\hat{W}_n(k)=\hat{W}_n(-k)$. In particular, $\sum_{n=0}^\infty \hat{W}_n(k)=1$ for any $k$. For a function $f$ on $S^1$, we define 
$$\Phi_n(f):=W_n*f.$$
A well known result from Littlewood-Paley theory guarantees that for any function $f$ on $S^1$, 
$$\|f\|_{L^p(S^1)}\sim \|(\Phi_n f)_{n\in \N}\|_{L^p(S^1\times \N)}.$$

\begin{dfn}
We define 
$$\|f\|_{B^{1/p}_{p,p}(S^1)}:=\|(2^{n/p}\Phi_n f)_{n\in \N}\|_{L^p(S^1\times \N)}.$$
\end{dfn}

\begin{thm}[Peller \cite{peller}]
\label{pellerhem}
Let $f$ be a function on $S^1$ extending holomorphically to the unit disc with $f(0)=0$. Then $H_{\overline{f}}\in \mathcal{L}^p(L^2(S^1))$ if and only if $f\in B^{1/p}_{p,p}(S^1)$. Moreover, for any $p_0>1$ there is a constant $C>0$ such that
$$C^{-1}\|f\|_{B^{1/p}_{p,p}(S^1)}\leq \|H_{\overline{f}}\|_{\mathcal{L}^p(L^2(S^1))}\leq C\|f\|_{B^{1/p}_{p,p}(S^1)}, \quad \forall p\in [1,p_0].$$
\end{thm}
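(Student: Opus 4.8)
For each fixed $p$ this is Peller's theorem \cite{peller}; the only new content is that the constant $C$ can be chosen uniformly for $p\in[1,p_0]$. The route I would take is the atomic (frame) description of the holomorphic Besov space, which turns $H_{\overline{f}}$ into a superposition of operators of uniformly bounded rank adapted to a hyperbolic lattice, and then lets one read off Schatten norms from coefficient $\ell^p$-norms by interpolation. The computational seed is the identity $H_{\overline{k_w}}=v_w\otimes\overline{k_w}$, where $k_w(z)=(1-\bar w z)^{-1}$ and $v_w:=\sum_{m\ge1}w^mz^{-m}$, so that $H_{\overline{k_w}}$ is rank one; for the atoms $\phi_w(z)=\lambda^{(p)}_w(1-\bar w z)^{-N}$, with $N$ a fixed integer (large relative to $p_0$) and $\lambda^{(p)}_w>0$ a scalar normalising $\phi_w$ in $B^{1/p}_{p,p}(S^1)$, the same computation shows that $H_{\overline{\phi_w}}=\sum_{i=1}^{M}x^{(i)}_w\otimes\overline{y^{(i)}_w}$ has rank at most $M=N^2$, with normalised vectors $x^{(i)}_w,y^{(i)}_w$ independent of $p$ and depending smoothly on $w$.

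Concretely I would proceed as follows.
\begin{enumerate}
\item[(1)] Fix a hyperbolic lattice $\{w_j\}\subseteq\mathbb{D}$, dense and separated enough that the holomorphic Besov space admits an atomic decomposition: a reconstruction formula $f=\sum_j c_j(f)\,\phi_{w_j}$ together with $\|f\|_{B^{1/p}_{p,p}(S^1)}\sim\|(c_j(f))_j\|_{\ell^p}$, with constants bounded for $p\in[1,p_0]$ because the synthesis and analysis operators are integral operators with kernels $\lesssim(1-|w|^2)^a(1-|\zeta|^2)^a|1-\bar w\zeta|^{-2a}$ satisfying a Schur test uniformly in the finitely many relevant exponents $a$. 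One also checks, uniformly in $p$, that this atomic norm is equivalent to the Littlewood--Paley norm introduced above (only the fixed multipliers $W_n$ enter here).
\item[(2)] Verify that the families $\{x^{(i)}_{w_j}\}_j$ and $\{y^{(i)}_{w_j}\}_j$ are Bessel sequences in $\ell^2$, with bounds depending only on the lattice separation.
\item[(3)] \emph{Upper bound.} Since $H_{\overline{f}}=\sum_j c_j(f)\,H_{\overline{\phi_{w_j}}}$, the synthesis map $(c_j)\mapsto\sum_j c_j H_{\overline{\phi_{w_j}}}$ is bounded $\ell^1\to\mathcal{L}^1$ with norm $\le M$ and $\ell^\infty\to\mathcal{L}^\infty$ with norm controlled by $M$ and the Bessel bounds; as $[\ell^1,\ell^\infty]_\theta=\ell^p$, $[\mathcal{L}^1,\mathcal{L}^\infty]_\theta=\mathcal{L}^p$ and complex interpolation is exact, $\|H_{\overline{f}}\|_{\mathcal{L}^p}\le C\|(c_j(f))\|_{\ell^p}\sim C\|f\|_{B^{1/p}_{p,p}(S^1)}$.
\item[(4)] \emph{Lower bound.} A direct computation from $H_{\overline{\phi_w}}=\sum_i x^{(i)}_w\otimes\overline{y^{(i)}_w}$ shows that each $c_j(f)$ equals, up to $p$-independent factors, a finite combination of inner products $\langle\widehat{y}^{(i)}_{w_j},H_{\overline{f}}\,\widehat{x}^{(i)}_{w_j}\rangle$ with normalised kernel-type vectors. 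It therefore suffices that the analysis map $T\mapsto(\langle\widehat{y}^{(i)}_{w_j},T\,\widehat{x}^{(i)}_{w_j}\rangle)_{i,j}$ be bounded $\mathcal{L}^p\to\ell^p$ uniformly in $p$; this follows by interpolating the trivial bound $\mathcal{L}^\infty\to\ell^\infty$ with the bound $\mathcal{L}^1\to\ell^1$, the latter read off from the singular value decomposition of $T$ together with the Bessel property. Hence $\|f\|_{B^{1/p}_{p,p}(S^1)}\sim\|(c_j(f))\|_{\ell^p}\le C\|H_{\overline{f}}\|_{\mathcal{L}^p}$.
\end{enumerate}

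The main obstacle is the uniformity bookkeeping in (1)--(2): one must show that the atomic-decomposition constants, the equivalence with the Littlewood--Paley norm, and the Bessel bounds stay bounded on the compact interval $[1,p_0]$, which reduces to continuity in the parameter of Schur tests for the weighted kernels above — routine, but this is where all the care sits. A separate delicacy is the endpoint $p=1$: the Szeg\"o (Riesz) projection is unbounded on $L^1$, so any argument realising Hankel operators as a complemented subspace of $\mathcal{L}^p$ breaks down there; the scheme above avoids this since it only manipulates the smooth multipliers $W_n$ and only ever interpolates or dualises against $\mathcal{L}^{p'}$ with $p'\in[p_0',\infty]$. An alternative would be to re-run the classical Littlewood--Paley proof, bounding each dyadic block by $\|H_{\overline{\Phi_N f}}\|_{\mathcal{L}^p}\sim 2^{N/p}\|\Phi_N f\|_{L^p(S^1)}$ (via interpolation between the $L^1$-endpoint, where $H_{\overline{\Phi_N f}}$ is an average of $O(2^N)$ rank-one operators, and the operator-norm endpoint), and then performing the almost-orthogonal recombination over $N$ with $p$-independent constants — that recombination being the genuinely nontrivial part of Peller's theorem.
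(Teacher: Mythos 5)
The paper does not prove this theorem at all: it cites Peller's book \cite[Chapter 6.2, Theorem 2.1]{peller} and then adds the one-sentence remark that, although Peller does not state a uniform constant, uniformity follows because Peller's argument proceeds by (complex) interpolation between the $p=1$ trace-class case and the explicit $p=2$ Hilbert--Schmidt case. Your proposal is therefore a genuinely different route: a self-contained reproof via the atomic decomposition of the holomorphic Besov space, with the uniformity in $p\in[1,p_0]$ carried by the fact that the atom $\phi_w=\lambda_w^{(p)}(1-\bar wz)^{-N}$ produces a Hankel operator of fixed rank $\le N^2$ with $p$-independent (normalised) range and co-range vectors, and with $\lambda_w^{(p)}\sim(1-|w|^2)^N$ essentially independent of $p$. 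The overall scheme is sound and is indeed one standard modern way to prove Peller's criterion; the interpolation $\ell^1\to\mathcal L^1$ versus $\ell^\infty\to\mathcal L^\infty$ in step (3) and the Bessel-sequence bound in step (4) are correctly identified as the engines. Two remarks on where care is needed beyond what you flag. First, the lower bound in step (4) is really a sampling (or ``derivative-sampling'') characterisation of $B^{1/p}_{p,p}$ rather than a direct consequence of the reconstruction formula: what the pairings $\langle\widehat{y}^{(i)}_{w_j},H_{\overline f}\,\widehat{x}^{(i)}_{w_j}\rangle$ recover are weighted derivative values $(1-|w_j|^2)^{n-1/p}f^{(n)}(w_j)$, and it is the equivalence $\|f\|_{B^{1/p}_{p,p}}\sim\|\bigl((1-|w_j|^2)^{n-1/p}f^{(n)}(w_j)\bigr)_j\|_{\ell^p}$ (uniform on $[1,p_0]$) that closes the loop; this is related to, but not identical with, the statement that the atomic coefficients $c_j(f)$ can be read off from those pairings. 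Second, the paper's one-line justification (``the proof is by interpolation'') is itself not quite complete when $p_0>2$, since Peller's interpolation step covers $1\le p\le2$ and the regime $p\ge2$ is handled by a separate argument; one advantage of your atomic scheme is that it treats the whole compact interval $[1,p_0]$ in a single stroke, at the cost of the Schur-test and Bessel-bound bookkeeping you point to.
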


The reader can note that the statement in \cite[Chapter 6.2, Theorem 2.1]{peller} does not give a uniform constant, but existence of a uniform constant follows from the fact that the proof is by interpolation. We shall use Peller's theorem to compute and estimate Dixmier traces. To do so, it will be important to keep track of the norms used on the Besov spaces. Let us state a general result regarding the estimates of Dixmier traces of Hankel operators. This statement is a direct consequence of Corollary \ref{equivalencescor} and Theorem \ref{pellerhem}. 

\begin{cor}
\label{equivbesovgeneerl}
Let $p\geq 1$,  and $\psi:$ be a function as in Corollary \ref{equivalencescor}. Assume that $(\|\cdot\|_{B^{1/q}_{q,q},*})_{q\geq p}$ is a family of norms on the Besov spaces $B^{1/q}_{q,q}(S^1)$ for $q\geq p$ such that there is a $p_0>p$ and a constant $C_0>0$ such that 
$$C_0^{-1} \|f\|_{B^{1/q}_{q,q},*}\leq \|f\|_{B^{1/q}_{q,q}}\leq C_0 \|f\|_{B^{1/q}_{q,q},*}, \quad \forall q\in [p,p_0].$$
Then for any holomorphic function $f$, 
$$H_{\bar{f}}\in \mathcal{M}_\psi^{(p)}\quad\Leftrightarrow \sup_{q>p}\frac{\|f\|_{B^{1/q}_{q,q},*}^q}{\psi(\mathrm{e}^{(q-p)^{-1}})} <\infty.$$ 
Moreover, there is a constant $C>0$ (independent of $f$) such that for any exponentiation invariant extended limit $\omega$, 
$$C^{-1}\lim_{q-p\to  \tilde{\omega}}\frac{1}{\psi(\mathrm{e}^{(q-p)^{-1}})} \|f\|_{B^{1/q}_{q,q},*}^q\leq \mathrm{Tr}_{\omega,\psi}(|H_{\bar{f}}|^p)\leq C\lim_{q-p\to  \tilde{\omega}}\frac{1}{\psi(\mathrm{e}^{(q-p)^{-1}})} \|f\|_{B^{1/q}_{q,q},*}^q.$$
Finally, for any holomorphic function $f\in \cap_{q>p} B^{1/q}_{q,q}(S^1)$
$$\mathrm{d}_{\mathcal{M}_\psi}(|H_{\bar{f}}|^p,\mathcal{M}_{\psi,0})\sim\limsup_{q\searrow p} \frac{1}{\psi(\mathrm{e}^{(q-p)^{-1}})}\|f\|_{B^{1/q}_{q,q},*}^{q}$$
\end{cor}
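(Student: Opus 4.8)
The plan is to derive Corollary \ref{equivbesovgeneerl} as a direct instance of Corollary \ref{equivalencescor}, using Peller's theorem (Theorem \ref{pellerhem}) to translate between the Schatten norms of Hankel operators and the Besov norms of their symbols. First I would set up the data required by Corollary \ref{equivalencescor}: take the von Neumann algebra $\mathcal{N}=\mathbb{B}(L^2(S^1))$ (which is atomic, so part C applies), set $X_h := B^{1/(p+h)}_{p+h,p+h}(S^1)$ equipped with the norm $\|\cdot\|_{B^{1/(p+h)}_{p+h,p+h},*}$, and let $T$ be the map $f\mapsto H_{\bar f}$. The nesting $X_h\subseteq X_{h'}$ for $h<h'$ holds because $S^1$ has finite measure and Besov spaces on a compact manifold embed downward along this scale; continuity of the inclusions is automatic from the closed graph theorem or can be read off Peller's estimates. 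The map $T$ restricts to $T_h:X_h\to\mathcal{L}^{p+h}$ continuously, again by Peller.

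The key point is that the two-sided bound in Corollary \ref{equivalencescor} requires constants $c_0(h),c_1(h)$ bounded between fixed $0<r\le R<\infty$ with
$$c_0(h)\|f\|_{X_h}\le \|H_{\bar f}\|_{\mathcal{L}^{p+h}}\le c_1(h)\|f\|_{X_h}.$$
Combining Peller's theorem (with its uniform constant $C$ on the interval $[1,p_0]$, hence in particular on $[p,p_0]$ after possibly shrinking $p_0$) with the hypothesis that $\|\cdot\|_{B^{1/q}_{q,q},*}$ is equivalent to $\|\cdot\|_{B^{1/q}_{q,q}}$ uniformly for $q\in[p,p_0]$ with constant $C_0$, we get exactly such bounds with $c_0(h)=C^{-1}C_0^{-1}$ and $c_1(h)=CC_0$ for $h\in[0,p_0-p]$; one then rescales the parameter interval from $[0,p_0-p]$ to $[0,1]$ (which is harmless since Corollary \ref{equivalencescor} is stated on $[0,1]$ but the extrapolation space and the asymptotic quantities only see $h\searrow 0$). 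With these constants in hand, parts A, B, C of Corollary \ref{equivalencescor} give, respectively, the bounds on $\mathrm{Tr}_{\omega,\psi}(|H_{\bar f}|^p)$, the membership criterion $H_{\bar f}\in\mathcal{M}_\psi^{(p)}\Leftrightarrow \sup_{h>0}\psi(\mathrm e^{1/h})^{-1}\|f\|_{B^{1/(p+h)}_{p+h,p+h},*}^{p+h}<\infty$, and the distance formula. A final change of variables $q=p+h$ (so that $\mathrm e^{1/h}=\mathrm e^{(q-p)^{-1}}$ and $q\searrow p \Leftrightarrow h\searrow 0$) puts all three statements in the form displayed in the corollary, and the implied constants depend only on $C$, $C_0$, $p$, $\psi$, as required by the $\sim$ notation.

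One technical wrinkle worth addressing carefully: Corollary \ref{equivalencescor} is phrased for a mapping $T$ defined on all of $X_1$, whereas here the natural domain is the space of holomorphic symbols with $f(0)=0$, and Peller's theorem is stated with that normalization. This is easily handled by either (i) restricting throughout to the closed subspace of Besov functions that extend holomorphically with vanishing constant term — the Hankel operator only sees this part of $f$ since $H_{\bar f}=0$ when $f$ is anti-holomorphic, and $H_{\bar{f}}$ is insensitive to the constant term of $f$ — or (ii) noting that for general holomorphic $f$ one has $H_{\bar f}=H_{\overline{f-f(0)}}$ and $\|f-f(0)\|_{B^{1/q}_{q,q}}$ differs from $\|f\|_{B^{1/q}_{q,q}}$ only in the lowest Littlewood--Paley block, which contributes a uniformly bounded multiplicative factor over $q\in[p,p_0]$ and so can be absorbed into $C_0$. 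I expect the main obstacle to be bookkeeping of the uniformity of constants — making sure that the uniform constant in Peller's theorem on $[1,p_0]$, the equivalence constant $C_0$ on $[p,p_0]$, and the rescaling of the parameter interval all compose into genuinely $f$-independent (but $p,\psi$-dependent) constants — rather than any substantive new estimate; the analytic content is entirely contained in Theorem \ref{allthethingsGSsaid}, Corollary \ref{equivalencescor}, and Peller's theorem.
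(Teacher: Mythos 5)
Your proposal is correct and follows the same route the paper takes: the paper states the corollary is ``a direct consequence of Corollary \ref{equivalencescor} and Theorem \ref{pellerhem}'' without spelling out the details, and your write-up correctly fills them in — setting $X_h = B^{1/(p+h)}_{p+h,p+h}(S^1)$ with the $*$-norm, taking $T(f)=H_{\bar f}$, combining Peller's uniform constant with the hypothesized equivalence constant $C_0$ to produce bounded $c_0,c_1$, handling the $f(0)=0$ normalization (which the paper also disposes of in a remark), and reparametrizing $q=p+h$. The one thing the paper leaves implicit that you also handle correctly is that only the behavior of the norms near $q\searrow p$ matters, so restricting the parameter range to $[p,p_0]$ and rescaling costs nothing.
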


Corollary \ref{equivbesovgeneerl} can be applied to a variety of different norms on the scale of Besov spaces. Let $f$ be a function on $S^1$ extending holomorphically to the unit disc and $p\in [1,\infty)$. By an abuse of notation, we identify $f$ with its holomorphic extension $f:\mathbb{D}\to \C$, where $\mathbb{D}$ denotes the unit disc. Let $\mu$ denote the measure on $\mathbb{D}$ given by $\mathrm{d}\mu(z)=(1-|z|^2)^{-2}\mathrm{d}m(z)$ where $m$ denotes the Lebesgue measure. 

\begin{dfn}
We define 
$$\|f\|_{B^{1/p}_{p,p},\mathbb{D}}:=\|(1-|z|^2)^2f''\|_{L^p(\mathbb{D},\mu)}.$$
\end{dfn}

The next result can also be found in \cite[Appendix 2.6]{peller}.

\begin{prop}
For any $p_0>1$ there is a constant $C>0$ such that for all holomorphic $f$ with $f(0)=f'(0)=0$
$$C^{-1}\|f\|_{B^{1/p}_{p,p},\mathbb{D}}\leq \|f\|_{B^{1/p}_{p,p}(S^1)}\leq C\|f\|_{B^{1/p}_{p,p},\mathbb{D}}, \quad \forall p\in [1,p_0].$$
\end{prop}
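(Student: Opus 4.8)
The equivalence for each fixed $p$ is classical (see \cite[Appendix 2.6]{peller}); what requires an argument is that the constant may be chosen uniformly in $p\in[1,p_0]$. The plan is to introduce the auxiliary quantity
\[
Q_p(f):=\Big(\sum_{m\geq 0}2^{-m(2p-1)}\,\|\Phi_m(f'')\|_{L^p(S^1)}^p\Big)^{1/p},
\]
built from the Littlewood--Paley blocks of $f''$, and to show that \emph{both} $\|f\|_{B^{1/p}_{p,p}(S^1)}$ and $\|f\|_{B^{1/p}_{p,p},\mathbb D}$ are comparable to $Q_p(f)$ with constants depending only on $p_0$. The hypothesis $f(0)=f'(0)=0$ enters because it makes $z^{-2}f$ a holomorphic function with $\widehat{z^{-2}f}(k)=\hat f(k+2)$, so that one may shift the Fourier support of $f''$ back onto that of $f$ without wrapping around to negative frequencies, and because it removes the kernel of $f\mapsto f''$ so that $Q_p$ really is a norm.

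The technical core is the elementary observation that a Fourier multiplier on $S^1$ whose symbol $M$ is supported in a dyadic band $[2^{n-1},2^{n+1}]$ (or in an interval of comparable length) and satisfies $\|M\|_\infty+2^{n}\|\Delta M\|_\infty+2^{2n}\|\Delta^2 M\|_\infty\le A$ is convolution by a kernel $K_M$ with $\|K_M\|_{L^1(S^1)}\lesssim A$ --- this follows by summing by parts twice, which gives $|K_M(\theta)|\lesssim A\min(2^n,2^{-n}|\theta|^{-2})$ --- and is therefore bounded on every $L^p(S^1)$, $1\le p\le\infty$, with operator norm $\lesssim A$ \emph{independently of $p$}. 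Applying this to the symbols $k\mapsto k(k\pm1)$, their reciprocals (composed with a shift by $2$, legitimate because $z^{-2}f$ is holomorphic), $k\mapsto\hat W_m(k)$ and $k\mapsto r^{\pm k}$ (the last restricted to a band), and combining it with the trivial bounds $\|g_r\|_{L^p(S^1)}\le\|g\|_{L^p(S^1)}$ and $\|(\Phi_n h)_r\|_{L^p(S^1)}\le r^{2^{n-1}}\|\Phi_n h\|_{L^p(S^1)}$ for holomorphic $h$, I obtain, with constants independent of $p\in[1,\infty]$: (a) $\|\Phi_n f\|_{L^p(S^1)}\sim 2^{-2n}\sum_{|j-n|\le1}\|\Phi_j(f'')\|_{L^p(S^1)}$, and (b) $\|(\Phi_m h)_r\|_{L^p(S^1)}\sim\|\Phi_m h\|_{L^p(S^1)}$ for $r\in[1-2^{-m},1)$.

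From (a) one gets at once $\|f\|_{B^{1/p}_{p,p}(S^1)}^p=\sum_n 2^n\|\Phi_n f\|_{L^p(S^1)}^p\sim Q_p(f)^p$ after reindexing (the only loss, a factor $3^{p_0-1}$, coming from opening the inner sum). For the disc norm, set $h=f''$ and slice $[0,1)$ into the dyadic annuli $I_m=[1-2^{-m},1-2^{-m-1}]$, on which $1-|z|^2\sim 2^{-m}$, so that $\|f\|_{B^{1/p}_{p,p},\mathbb D}^p=\int_0^1(1-r^2)^{2p-2}\|h_r\|_{L^p(S^1)}^p\,r\,dr$. The bound $\gtrsim Q_p(f)^p$ follows by restricting the integral to $I_m$ and using $\|h_r\|_{L^p}\gtrsim\|(\Phi_m h)_r\|_{L^p}\gtrsim\|\Phi_m h\|_{L^p}$ there (boundedness of $W_m$ as a multiplier, and (b)). For the bound $\lesssim Q_p(f)^p$ one estimates, for $r\in I_m$, $\|h_r\|_{L^p}\le\sum_{n\le m+2}\|\Phi_n h\|_{L^p}+\sum_{n>m+2}e^{-2^{n-m-2}}\|\Phi_n h\|_{L^p}$ from the two trivial bounds, and then sums over $m$: the tail is handled by Jensen's inequality in $m$ together with the double-exponential decay, and the main term by Jensen's inequality applied with the \emph{geometric} convex weights $\lambda_{m,n}=2^{-(m-n)}/Z$, where $Z=\sum_{j\ge0}2^{-j}=2$ is independent of $m$ --- that independence is the whole point --- which gives $\big(\sum_{n\le m+2}\|\Phi_n h\|_{L^p}\big)^p\le Z^{p-1}\sum_{n\le m+2}2^{(p-1)(m-n)}\|\Phi_n h\|_{L^p}^p$; exchanging the order of summation and summing the geometric series $\sum_{m\ge n}2^{m((p-1)-(2p-1))}=\sum_{m\ge n}2^{-mp}$ yields $\sum_m 2^{-m(2p-1)}\|h_r\|_{L^p}^p\lesssim Q_p(f)^p$ with a constant $\lesssim 2^{p_0}$. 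The integral over $|z|<1/2$ and the contribution of the finitely many lowest-frequency blocks are dealt with separately using Bernstein's and Nikolskii's inequalities (whose constants are uniform in $p$) and subharmonicity of $|h|^p$.

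I expect the main obstacle to be precisely the bound $\|f\|_{B^{1/p}_{p,p},\mathbb D}\lesssim Q_p(f)$: the triangle inequality over dyadic blocks is lossy by a factor polynomial in the ``depth'' $m$, and the summation must be arranged so that this loss is absorbed by the geometric weight $2^{-m(2p-1)}$ while all constants remain bounded as $p\to 1$, where Hölder conjugate exponents degenerate. Running Jensen's inequality with geometric rather than uniform weights is what makes this uniform; granting it, the remaining work is bookkeeping, since the multiplier estimates behind (a) and (b) are $L^1$-kernel bounds and hence $p$-free.
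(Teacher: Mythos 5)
The paper itself does not prove this proposition: it simply cites Peller's Appendix 2.6, so there is no internal argument to compare against. Your proposal supplies a self-contained proof, and the overall strategy is sound: reduce both $\|f\|_{B^{1/p}_{p,p}(S^1)}$ and $\|f\|_{B^{1/p}_{p,p},\mathbb D}$ to the common Littlewood--Paley quantity $Q_p(f)$, make the Fourier-multiplier estimates $p$-independent by proving $L^1$ kernel bounds (hence uniform $L^p$-boundedness via Young), and control the dyadic summation by Jensen's inequality with geometric weights $2^{-(m-n)}/Z$ so that nothing degenerates as $p\to1$. That last device is precisely what the problem needs and is the genuine content of the argument.

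Two technical points should be tightened before this counts as a proof. First, the multiplier criterion you state --- $\|M\|_\infty+2^n\|\Delta M\|_\infty+2^{2n}\|\Delta^2M\|_\infty\le A$ --- does not apply to the piecewise-linear symbols $\hat W_n$ themselves: their second differences are supported at the four corners with size $O(2^{-n})$, so $2^{2n}\|\Delta^2\hat W_n\|_\infty\sim 2^n$, not $O(1)$. The double summation by parts actually gives $|K_M(\theta)|\lesssim|\theta|^{-2}\,\|\Delta^2 M\|_{\ell^1}$, and the correct hypothesis is $\|\Delta^2 M\|_{\ell^1}\lesssim 2^{-n}$, which $\hat W_n$ does satisfy and which yields $\|W_n\|_{L^1}\lesssim1$; the same $\ell^1$-of-second-differences criterion then handles the band restrictions of $k\mapsto k(k\pm1)$, $k\mapsto 1/(k(k\pm1))$ and $k\mapsto r^{\pm k}$ as you use them. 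Second, the two-sided ``$\sim$'' in your claim (a) should be read as the pair of one-sided estimates $\|\Phi_nf\|_{L^p}\lesssim 2^{-2n}\sum_{|j-n|\le1}\|\Phi_jf''\|_{L^p}$ and $\|\Phi_mf''\|_{L^p}\lesssim 2^{2m}\sum_{|j-m|\le1}\|\Phi_jf\|_{L^p}$, each carrying the neighbourhood sum on its own side; this is what the multiplier bounds actually produce and is all that the summed equivalence $\|f\|_{B^{1/p}_{p,p}(S^1)}^p\sim Q_p(f)^p$ requires after opening the finite sums (cost $3^{p_0-1}$). The low-frequency and $|z|<1/2$ contributions are only sketched, but with subharmonicity of $|f''|^p$ and Bernstein/Nikolskii estimates whose constants are indeed uniform on $p\in[1,p_0]$, they are routine. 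With these repairs the argument goes through.
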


We remark that the condition $f(0)=f'(0)=0$ plays no role once going to the extrapolation space because we can write any $f=f_0+g_0$ where $f_0(0)=f'_0(0)=0$ and $g_0=-f'(0)z-f(0)$ satisfies that $H_{\bar{g}_0}$ is finite rank.

For a holomorphic $f\in \cap_{q>p} B^{1/q}_{q,q}(S^1)$ we define $F_f\in \cap_{q>p}L^q(0,\infty)$ as the decreasing rearrangement of the function $(1-|z|^2)^2f''$ on $\mathbb{D}$ with respect to the measure $\mu$. We also define $\Phi_f\in \cap_{q>p}L^q(0,\infty)$ as the decreasing rearrangement of the function $S^1\times \N\ni (\theta,n)\mapsto W_n*f(\mathrm{e}^{i\theta})$ with respect to the product measure $\nu$ on $S^1\times \N$. It is follows from the well known fact that $L^q$-norms of functions coincides with the $L^q(0,\infty)$-norm of its decreasing rearrangement that for $q>p$
\begin{align*}
\|f\|_{B^{1/q}_{q,q}(S^1)}&=\|W_n*f\|_{L^q(S^1\times \N,\nu)}=\|\Phi_f\|_{L^q(0,\infty)}\quad \mbox{and}\\
\|f\|_{B^{1/q}_{q,q},\mathbb{D}}&=\|(1-|z|^2)^2f''\|_{L^p(\mathbb{D},\mu)}=\|F_f\|_{L^q(0,\infty)}.
\end{align*}

\begin{thm}
\label{Thm1}
Let $p\geq 1$ and $\psi$ be a function as in Corollary \ref{equivalencescor}. Assume that $f$ is holomorphic. Then the following assertions are equivalent:
\begin{enumerate}
\item $\limsup_{h\searrow 0} \left(\frac{1}{\psi(\mathrm{e}^{\frac{1}{h}})} \int_{\textbf{D}} |f''(z)|^{p+h} (1-|z|^2)^{2p+2h-2} dz\right)^{\frac{p}{p+h}} <\infty;$
\item $\limsup_{t\to \infty} \frac1{\psi(t)} \int_0^t F_f(s)^p ds < \infty;$
\item $\limsup_{h\searrow 0} \left(\frac{1}{\psi(\mathrm{e}^{\frac{1}{h}})} \int_{\textbf{T}\times \textbf{N}} |f *W_n)(e^{i\theta})|^{p+h} d\nu(\theta, n)\right)^{\frac{p}{p+h}} <\infty;$
\item $\limsup_{t\to \infty} \frac1{\psi(t)} \int_0^t \Phi_f(s)^p ds < \infty;$
\item  $H_{\overline{f}} \in \mathcal{M}_\psi^{(p)}.$
\end{enumerate}
The quantities in (1)-(4) are all equivalent to $\mathrm{d}_{\mathcal{M}_\psi}(|H_{\overline{f}}|^p,\mathcal{M}_{\psi,0})$.
\end{thm}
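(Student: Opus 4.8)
The plan is to derive Theorem \ref{Thm1} as an instance of Corollary \ref{equivbesovgeneerl} applied to the two concrete families of norms on the Besov scale introduced just above the statement, namely the Littlewood--Paley norm $\|\cdot\|_{B^{1/q}_{q,q}(S^1)}$ and the disc norm $\|\cdot\|_{B^{1/q}_{q,q},\mathbb{D}}$. First I would dispose of the hypothesis $f(0)=f'(0)=0$: as remarked after the Proposition above, a general holomorphic $f$ differs from one satisfying $f(0)=f'(0)=0$ by $g_0=-f'(0)z-f(0)$, for which $H_{\bar g_0}$ is finite rank, hence lies in $\mathcal{M}_{\psi,0}^{(p)}$ and contributes nothing to membership in $\mathcal{M}_\psi^{(p)}$ or to the distance $\mathrm{d}_{\mathcal{M}_\psi}(|H_{\bar f}|^p,\mathcal{M}_{\psi,0})$; so I may assume $f(0)=f'(0)=0$ throughout.

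Next I would verify that both norm families satisfy the hypotheses of Corollary \ref{equivbesovgeneerl}. For the disc norm this is exactly the content of the Proposition stated just above (with $C=C_0$, valid on $[1,p_0]$ for any $p_0>1$, in particular on $[p,p_0]$ for some $p_0>p$), and for the Littlewood--Paley norm it is the definition together with the trivial equality $\|\cdot\|_{B^{1/q}_{q,q},*}=\|\cdot\|_{B^{1/q}_{q,q}(S^1)}$. Applying Corollary \ref{equivbesovgeneerl} with $\|\cdot\|_{B^{1/q}_{q,q},*}=\|\cdot\|_{B^{1/q}_{q,q},\mathbb{D}}$ gives that $H_{\bar f}\in\mathcal{M}_\psi^{(p)}$ iff $\sup_{q>p}\psi(\mathrm{e}^{(q-p)^{-1}})^{-1}\|f\|_{B^{1/q}_{q,q},\mathbb{D}}^q<\infty$, and likewise that $\mathrm{d}_{\mathcal{M}_\psi}(|H_{\bar f}|^p,\mathcal{M}_{\psi,0})\sim\limsup_{q\searrow p}\psi(\mathrm{e}^{(q-p)^{-1}})^{-1}\|f\|_{B^{1/q}_{q,q},\mathbb{D}}^q$; applying it instead with the Littlewood--Paley norm gives the analogous statements with $\|f\|_{B^{1/q}_{q,q}(S^1)}$. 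This immediately yields the equivalences $(5)\Leftrightarrow(1)$ (rewriting $\|f\|_{B^{1/q}_{q,q},\mathbb{D}}^q=\int_{\mathbb{D}}|f''(z)|^q(1-|z|^2)^{2q-2}\,dm(z)$ via $d\mu=(1-|z|^2)^{-2}dm$, and re-parametrising $q=p+h$, so that the $\sup$/$\limsup$ over $q>p$ becomes the $\limsup$ over $h\searrow0$ in (1)) and $(5)\Leftrightarrow(3)$ likewise, together with the claim that the quantities in (1) and (3) are each $\sim\mathrm{d}_{\mathcal{M}_\psi}(|H_{\bar f}|^p,\mathcal{M}_{\psi,0})$.

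To bring in the "rearrangement" conditions (2) and (4), I would invoke the two identities displayed just above the theorem statement: $\|f\|_{B^{1/q}_{q,q},\mathbb{D}}=\|F_f\|_{L^q(0,\infty)}$ and $\|f\|_{B^{1/q}_{q,q}(S^1)}=\|\Phi_f\|_{L^q(0,\infty)}$, where $F_f,\Phi_f$ are the stated decreasing rearrangements. Thus $(1)$ is exactly $\|F_f\|_{p,\limsup}^p<\infty$ and $(3)$ is $\|\Phi_f\|_{p,\limsup}^p<\infty$ in the notation of Lemma \ref{prop7} (after the substitution $h\mapsto q-p$), while $(2)$ is $\|F_f\|_{p,\lim\psi}^p<\infty$ and $(4)$ is $\|\Phi_f\|_{p,\lim\psi}^p<\infty$. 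Lemma \ref{prop7} gives $\|g\|_{p,\lim\psi}\le\|g\|_{p,\limsup}\le\mathrm{e}\|g\|_{p,\lim\psi}$ for $g=F_f,\Phi_f$, so $(1)\Leftrightarrow(2)$ and $(3)\Leftrightarrow(4)$, and moreover the quantities in $(2)$ and $(4)$ are each $\sim$ those in $(1)$ and $(3)$, hence $\sim\mathrm{d}_{\mathcal{M}_\psi}(|H_{\bar f}|^p,\mathcal{M}_{\psi,0})$. Chaining the equivalences closes the cycle (1)$\Leftrightarrow$(2)$\Leftrightarrow$(3)$\Leftrightarrow$(4)$\Leftrightarrow$(5), and the final sentence of the theorem follows from the equivalence-of-quantities assertions collected along the way.

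The main obstacle is bookkeeping rather than conceptual: one must be careful that Corollary \ref{equivbesovgeneerl} is stated for the extrapolation scale indexed by $q\geq p$ (equivalently $h=q-p\geq 0$) while Lemma \ref{prop7} is stated for $h\searrow 0$ with the weight $\psi(\mathrm{e}^{1/h})$, and that the rearrangement identities are only asserted for $q>p$, so one should check that the $\limsup_{h\searrow 0}$ in (1) and (3) indeed matches $\limsup_{q\searrow p}$ appearing in the distance formula of Corollary \ref{equivbesovgeneerl} (they do, verbatim, after $q=p+h$). A secondary point to state cleanly is that atomicity of $\mathcal{N}=\mathbb{B}(L^2(S^1))$ is what licenses the use of part C of Corollary \ref{equivbesovgeneerl} (equivalently Lemma \ref{prop7.5}) here; this holds since $L^2(S^1)$ is separable and $\mathbb{B}$ of a Hilbert space is atomic. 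Everything else is a direct substitution of the already-established results.
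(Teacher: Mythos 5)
Your proof is correct and is essentially the approach the paper intends: the paper's terse remark proves the theorem by repeating \cite[Theorem 1]{EZ} with $\log t$, $p-1$, and \cite[Proposition 7]{EZ} replaced by $\psi(t)$, $\psi(\mathrm{e}^{1/h})$, and Lemma~\ref{prop7} respectively, which amounts precisely to specializing Corollary~\ref{equivbesovgeneerl} to the Littlewood--Paley and disc norms and chaining via the rearrangement identities and Lemma~\ref{prop7}, as you do. The bookkeeping point you flag --- that $\sup_{q>p}$ in the membership criterion matches $\limsup_{h\searrow 0}$ in (1) and (3), and that the exponent $p/(p+h)\to 1$ is harmless --- is indeed fine, since the theorem implicitly assumes $f\in\cap_{q>p}B^{1/q}_{q,q}(S^1)$ (required to define $F_f$ and $\Phi_f$), so the integrals are finite for $q$ bounded away from $p$ and only the small-$h$ behaviour contributes.
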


The proof of this result is a straightforward repetition of that of \cite[Theorem 1]{EZ} with the replacement of $\log t$, $p-1$ and the use of \cite[Proposition 7]{EZ} by $\psi(t)$, $\psi(e^\frac1{h})$ and the use of Proposition~\ref{prop7}, respectively. Again, as in Proposition~\ref{prop7} we can reduce the proof to $p=1$.

\begin{prop}\label{prop8}
Let $\psi$ satisfy~\eqref{apsicond}. 
Let $H=H^*\in L^q(0,\infty)$ for all $1<q<1+\delta$ for some $\delta>0$. Let $\omega$ be an exponentiation invariant extended limit on $L^\infty(0,\infty)$ and $\hat{\omega}:=\omega \circ {\rm exp}$.

(a) For every $\alpha>1$ and sufficiently large $t>0$ one has
$\mu_H(1/t) \le t^\alpha;$

(b) One has
$$\lim_{t\to \omega} \frac1{\psi(t)} \int_0^t H(s) ds = \lim_{t\to \omega} \frac1{\psi(t)} \int_0^{\mu_H(1/t)} H(s) ds;$$

(c) One has
$$\lim_{r\to \hat{\omega}} \frac{\|H\|_{1+1/r}}{\psi(\mathrm{e}^{r})}= \lim_{t\to \omega} \frac1{\psi(t)} \int_0^t H(s) ds.$$
\end{prop}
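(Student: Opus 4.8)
The plan is to prove the three statements of Proposition~\ref{prop8} in the stated order, since (c) will rely on (a) and (b). The key technical device is the substitution relating the integral $\frac{1}{\psi(t)}\int_0^t H(s)\,\mathrm{d}s$ to the $L^{1+1/r}$-norm, passing through the singular value function; the role of $\omega$ being exponentiation invariant (equivalently, $\hat\omega = \omega\circ\exp$ being a dilation-invariant limit at $\infty$ after the substitution $t = \mathrm{e}^r$) is what lets the various error terms be absorbed.

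For part (a): since $H\in L^q(0,\infty)$ for $1<q<1+\delta$, for each such $q$ we have $\int_0^\infty |H(s)|^q\,\mathrm{d}s<\infty$, and by the Chebyshev-type inequality $\mu_H(u)^q\cdot u \le \|H\|_q^q$, i.e.\ $\mu_H(u)\le \|H\|_q\, u^{-1/q}$. Setting $u = 1/t$ gives $\mu_H(1/t)\le \|H\|_q\, t^{1/q}$, and since $1/q<1<\alpha$ this is $\le t^\alpha$ for $t$ large. This is the easy step.

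For part (b): write $\frac{1}{\psi(t)}\int_0^t H(s)\,\mathrm{d}s = \frac{1}{\psi(t)}\int_0^{\mu_H(1/t)} H(s)\,\mathrm{d}s + \frac{1}{\psi(t)}\int_{\mu_H(1/t)}^{t} H(s)\,\mathrm{d}s$ (with the obvious sign convention if $\mu_H(1/t)>t$, which by (a) does not happen for large $t$). On the interval $[\mu_H(1/t), t]$ the decreasing rearrangement $\mu_H$ satisfies $\mu_H(s)\le \mu_H(\mu_H(1/t))$; more usefully, for $s\ge \mu_H(1/t)$ one has $\mu_H(s)\le 1/t$ by the definition of $\mu_H$ as a generalized inverse. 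Hence the tail integral is bounded by $\frac{1}{\psi(t)}\cdot (t-\mu_H(1/t))\cdot\frac1t \le \frac{1}{\psi(t)}$, which tends to $0$ since $\psi(\infty)=\infty$; applying $\omega$ (an extended limit, so it kills null sequences) removes this term. The main obstacle here is being careful that $\mu_H(s)\le 1/t$ for $s\ge\mu_H(1/t)$ uses the right convention for $\mu_H$ and possibly an approximation if there are flat pieces, but this is standard rearrangement bookkeeping.

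For part (c): substitute $t = \mathrm{e}^r$ so that $\lim_{t\to\omega} = \lim_{r\to\hat\omega}$ by definition of $\hat\omega$. Starting from (b), the point is to compare $\frac{1}{\psi(\mathrm{e}^r)}\int_0^{\mu_H(\mathrm{e}^{-r})} H(s)\,\mathrm{d}s$ with $\frac{\|H\|_{1+1/r}}{\psi(\mathrm{e}^r)}$. On the interval $[0,\mu_H(\mathrm{e}^{-r})]$, by H\"older with exponents $1+1/r$ and $1+r$, $\int_0^{\mu_H(\mathrm{e}^{-r})} H \le \|H\|_{1+1/r}\cdot \mu_H(\mathrm{e}^{-r})^{1/(1+r)}$, and by (a), $\mu_H(\mathrm{e}^{-r})\le \mathrm{e}^{\alpha r}$, so $\mu_H(\mathrm{e}^{-r})^{1/(1+r)} \le \mathrm{e}^{\alpha r/(1+r)}\to \mathrm{e}^{\alpha}$; since this holds for every $\alpha>1$, after dividing by $\psi(\mathrm{e}^r)$ and taking $\lim_{r\to\hat\omega}$ the factor contributes at most $1$ in the limit (here one uses exponentiation-invariance of $\omega$ to discard the bounded multiplicative fluctuation — this is exactly where it matters, mirroring \cite[Proposition 7]{EZ}). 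For the reverse inequality, restrict: $\|H\|_{1+1/r}^{1+1/r} = \int_0^\infty \mu_H(s)^{1+1/r}\,\mathrm{d}s \ge \int_0^{\mu_H(\mathrm{e}^{-r})}\mu_H(s)^{1/r} H(s)\,\mathrm{d}s \ge \mathrm{e}^{-1}\int_0^{\mu_H(\mathrm{e}^{-r})} H(s)\,\mathrm{d}s$ using $\mu_H(s)\ge \mu_H(\mu_H(\mathrm{e}^{-r}))$ ... here one should instead bound below using $\mu_H(s)\ge \mathrm{e}^{-r}$ on a suitable sub-interval, or more cleanly run the argument of \cite[Proposition 7]{EZ} / Lemma~\ref{prop7} directly. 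Combining both directions and invoking exponentiation-invariance of $\omega$ to neutralize the $\mathrm{e}^{\pm 1}$ and $\mathrm{e}^{\alpha}$ constants gives the claimed equality. The hard part will be the clean handling of the two-sided comparison between the truncated integral and the $L^{1+1/r}$-norm, together with the verification that the exponentiation invariance of $\omega$ genuinely absorbs the constants — this is the conceptual core and is where the hypothesis \eqref{apsicond} on $\psi$ and the structure of $\hat\omega$ enter.
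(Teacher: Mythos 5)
Your Chebyshev approach to (a) is a legitimate alternative to the paper's argument (the paper works with $c_H := \sup_t\frac1{\psi(t)}\int_0^tH$, which is finite only under the tacit assumption $H\in\mathcal{M}_\psi$, and then uses slow variation of $\psi$), but as written it bounds the wrong object: in Proposition~\ref{prop8} the quantity $\mu_H(1/t)$ denotes a \emph{time} — the measure of the super-level set $\{H>1/t\}$, as the paper's own use of $H(\mu_H(1/t))=1/t$ and the integration range $\int_0^{\mu_H(1/t)}$ make clear — not a value of the decreasing rearrangement. The Chebyshev inequality for that object reads $(1/t)^q\,\mu_H(1/t)\le\|H\|_q^q$, giving $\mu_H(1/t)\le\|H\|_q^q\,t^q$ and forcing the choice $1<q<\min(\alpha,1+\delta)$ — not $\mu_H(1/t)\le\|H\|_q\,t^{1/q}$ with the vacuous constraint ``$1/q<\alpha$'' you invoke. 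The conclusion survives, but your derivation has an exponent error.

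The serious gap is in (b). You assert that (a) rules out $\mu_H(1/t)>t$ for large $t$; it does not. The bound $\mu_H(1/t)\le t^\alpha$ for every $\alpha>1$ (with a threshold in $t$ depending on $\alpha$) is strictly weaker than $\mu_H(1/t)\le t$, and $\mu_H(1/t)\ge t$ is perfectly consistent with the hypotheses. In that regime your pointwise estimate $|\,$error$\,|\le1/\psi(t)$ fails, and the best (a) gives is of order $t^{\alpha-1}/\psi(t)$, which tends to infinity because $\psi$ is slowly varying. Notice that your argument for (b) never uses the exponentiation invariance of $\omega$ or condition~\eqref{apsicond} — a clear signal that something is missing, since without them the statement is false in general. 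The paper's proof is one-sided: $\int_0^tH\le\int_0^{\mu_H(1/t)}H+1\le\int_0^{t^\alpha}H+1$, whence after dividing by $\psi(t)$ and applying $\omega$ one gets $L\le M\le A_\psi(\alpha)L$, and letting $\alpha\searrow1$ with $A_\psi(\alpha)\to1$ (by~\cite[Lemma 1.3]{GS}) squeezes $M=L$. Finally, for (c) your H\"older-based comparison can at best reproduce the two-sided inequality with a constant $\mathrm{e}$, as in Lemma~\ref{prop7}, not the asserted \emph{equality}: exponentiation invariance removes a factor $A_\psi(\alpha)$ but not a numerical factor such as $\mathrm{e}^\alpha$. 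The missing ingredient for equality is the generalized Karamata theorem, \cite[Proposition 3.2]{GS}, which is exactly what the paper substitutes for the classical Karamata theorem of~\cite[Proposition 8(c)]{EZ}; this Abelian/Tauberian input cannot be replaced by H\"older estimates.
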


\begin{proof}

(a) Denote for brevity $a:=\mu_H(1/t).$ For sufficiently large $t>0$ we have
$$c_H := \sup_{t>2} \frac1{\psi(t)} \int_0^t H(s) ds \ge \frac1{\psi(a)} \int_0^a H(s) ds \ge \frac{a H(a)}{\psi(a)}= \frac{a}{t \psi(a)},$$
since $H$ is nonincreasing and $H(\mu_H(1/t))=1/t$. Since the function $\psi$ is slowly varying, it follows that for every $0<\varepsilon <1$ there exists $C>0$ such that $\psi(t)\le C t^\varepsilon$ for all $t>0$. Hence,
$$c_H  \ge \frac{a}{t C a^\varepsilon}=\frac{a^{1-\varepsilon}}{C t}.$$

Therefore,
$$\mu_H(1/t) \le (C c_H t)^\frac{1}{1-\varepsilon}.$$

Since this inequality holds for every $0<\varepsilon <1$, it follows that for every $\alpha>1$ and sufficiently large $t>0$ one has
$\mu_H(1/t) \le t^\alpha.$

(b) For sufficiently large $t>0$ one has
$$\int_0^t H(s) ds \le \int_0^{\mu_H(1/t)} H(s) ds +1 \le \int_0^{t^\alpha} H(s) ds+1,$$
where the first inequality was proved in~\cite[Proposition 8]{EZ} and the second one was proved above.

Dividing by $\psi(t)$ and applying extended limits, yields
\begin{equation}\label{eq2}
\lim_{t\to \omega} \frac1{\psi(t)} \int_0^t H(s) ds\le\lim_{t\to \omega} \frac1{\psi(t)} \int_0^{\mu_H(1/t)} H(s) ds \le\lim_{t\to \omega} \frac1{\psi(t)} \int_0^{t^\alpha} H(s) ds.
\end{equation}

Since $\psi$ satisfies~\eqref{apsicond}, it follows from the property of extended limits that
\begin{align*}
\lim_{t\to \omega} \frac1{\psi(t^\alpha)} \int_0^{t^\alpha} H(s) ds&\le \lim_{t\to \omega} \frac{\psi(t^\alpha)}{\psi(t)} \frac1{\psi(t^\alpha)} \int_0^{t^\alpha} H(s) ds\\ 
&=A_\psi(\alpha) \lim_{t\to \omega}  \frac1{\psi(t^\alpha)} \int_0^{t^\alpha} H(s) ds.
\end{align*}

Since $\omega$ is exponentiation invariant, it follows that
\begin{equation}\label{eq3}
\lim_{t\to \omega} \frac1{\psi(t^\alpha)} \int_0^{t^\alpha} H(s) ds\le A_\psi(\alpha) \lim_{t\to \omega}  \frac1{\psi(t)} \int_0^{t} H(s) ds.
\end{equation}

Combining~\eqref{eq2} and~\eqref{eq3}, we obtain that
$$\lim_{t\to \omega} \frac1{\psi(t)} \int_0^t H(s) ds\le\lim_{t\to \omega} \frac1{\psi(t)} \int_0^{\mu_H(1/t)} H(s) ds \le A_\psi(\alpha) \lim_{t\to \omega}  \frac1{\psi(t)} \int_0^{t} H(s) ds$$
holds for every $\alpha>1$. It follows from \cite[Lemma 1.3]{GS} that $A_\psi(\alpha)\to 1$ as $\alpha \searrow 1$. This proves part (b).

(c) The proof of part (c) is a straightforward repetition of \cite[Proposition 8 (c)]{EZ} with the only difference that instead of the classical Karamata theorem one has to use its generalisation proved in \cite[Proposition 3.2]{GS}.
\end{proof}

\begin{thm}
\label{Thm2} 
Let $p\geq 1$, $\psi$ be a function as in Corollary \ref{equivalencescor} and $\omega$ an exponentiation invariant extended limit on $L^\infty(0,\infty)$. Assume that $f\in \cap_{q>p} B^{1/q}_{q,q}(S^1)$ is holomorphic.
 The following quantities are equivalent:
\begin{enumerate}
\item $$\lim_{h\to\tilde{\omega}} \left(\frac{1}{\psi(\mathrm{e}^{\frac{1}{h}})} \int_{\textbf{D}} |f''(z)|^{p+h} (1-|z|^2)^{2p+2h-2} dz\right)^{\frac{p}{p+h}} = \lim_{t\to \omega} \frac1{\psi(t)} \int_0^t F_f(s)^p ds;$$
\item $$\lim_{h\to\tilde{\omega}} \left(\frac{1}{\psi(\mathrm{e}^{\frac{1}{h}})} \int_{\textbf{T}\times \textbf{N}} |f *W_n)(e^{i\theta})|^{p+h} d\nu(\theta, n)\right)^{\frac{p}{p+h}}  = \lim_{t\to \omega} \frac1{\psi(t)} \int_0^t \Phi_f(s)^p ds;$$
\item ${\rm Tr}_{\omega,\psi} |H_{\overline{f}}|^p.$\\
\end{enumerate}
Here $\tilde{\omega}$ is defined as in Equation \eqref{omegatildedef}.
\end{thm}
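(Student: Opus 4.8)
The plan is to deduce Theorem \ref{Thm2} from the results already assembled, in essentially the same way that \cite[Theorem 2]{EZ} was deduced from \cite[Proposition 8]{EZ}, but using the regularly-varying analogues established above. As with Theorem \ref{Thm1}, I would first reduce to the case $p=1$: the function $\psi^{1/p}$ satisfies \eqref{apsicond} and \eqref{Phi_as1} (this was already checked in the proof of Lemma \ref{prop7}), and since everything in sight involves $p$:th powers, replacing $f''$-data by its $p$:th power and $\psi$ by $\psi^p$ reduces all four displayed expressions to their $p=1$ versions. So from now on $p=1$, and I work with the nonincreasing functions $F_f, \Phi_f \in \cap_{q>1} L^q(0,\infty)$.

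The core of the argument is the chain of equalities
$$\mathrm{Tr}_{\omega,\psi}(|H_{\bar f}|)=\lim_{t\to\omega}\frac{1}{\psi(t)}\int_0^t\mu_{H_{\bar f}}(s)\,\mathrm{d}s=\lim_{t\to\omega}\frac{1}{\psi(t)}\int_0^t F_f(s)\,\mathrm{d}s=\lim_{t\to\omega}\frac{1}{\psi(t)}\int_0^t\Phi_f(s)\,\mathrm{d}s,$$
together with the extrapolation identification of each such Cesàro-$\omega$-limit with the corresponding $\tilde\omega$-limit of rescaled $L^{1+h}$-norms. For the first step, the identification of the Dixmier trace with the $\omega$-limit of the Cesàro mean is the very definition of $\mathrm{Tr}_{\omega,\psi}$ recalled in Subsection \ref{opidelalsos}. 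For the middle step I would invoke Peller's theorem (Theorem \ref{pellerhem}) to know $H_{\bar f}\in\mathcal L^q$ iff $f\in B^{1/q}_{q,q}$ with uniform constants on $[1,q_0]$, and then argue that the Cesàro-$\omega$-limits of $\mu_{H_{\bar f}}$, of $F_f$ (the rearrangement of the disc-data), and of $\Phi_f$ (the rearrangement of the Littlewood--Paley data) all agree: the two Besov expressions are equal as genuine $L^q$-norms of rearrangements, and comparison with $\mu_{H_{\bar f}}$ follows because the norm-equivalences are two-sided with constants tending to $1$ — or more robustly, because Corollary \ref{equivalencescor}(A) together with the Remark following it already gives $\mathrm{Tr}_{\omega,\psi}(|H_{\bar f}|)$ sandwiched between universal multiples of $\lim_{q-1\to\tilde\omega}\psi(\mathrm e^{(q-1)^{-1}})^{-1}\|f\|_{B^{1/q}_{q,q},*}^q$, and one upgrades ``$\sim$'' to ``$=$'' exactly where the constants $c_0,c_1$ can be taken asymptotically equal. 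The final, and most substantive, step is to pass from the $\omega$-limit of the Cesàro mean $\frac1{\psi(t)}\int_0^t F_f(s)\,\mathrm{d}s$ to the $\tilde\omega$-limit of $\psi(\mathrm e^{1/h})^{-1}\|F_f\|_{1+h}^{1+h}$; this is precisely Proposition \ref{prop8}(c) applied to $H=F_f$ (and to $H=\Phi_f$), whose proof in turn rests on parts (a),(b) and on the regularly-varying Karamata theorem \cite[Proposition 3.2]{GS}. Finally I would translate $\|F_f\|_{1+h}^{1+h}$ back into the disc integral $\int_{\mathbb D}|f''|^{1+h}(1-|z|^2)^{2h}\,\mathrm{d}m(z)$ and $\|\Phi_f\|_{1+h}^{1+h}$ back into $\int_{S^1\times\N}|f*W_n|^{1+h}\,\mathrm{d}\nu$, which is immediate from the definitions of $F_f$ and $\Phi_f$ as decreasing rearrangements.

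The main obstacle is bookkeeping rather than conceptual: one must make sure that all the norm-equivalences (Peller's, the $B^{1/q}_{q,q},\mathbb D$ vs.\ $B^{1/q}_{q,q}(S^1)$ comparison, and the Littlewood--Paley equivalence) hold with constants \emph{uniform in $q$ on an interval $[1,q_0]$} and, where an exact equality rather than an equivalence is claimed, that the relevant constants can be normalized to $1$; this is why the statement only asserts equivalence of the three quantities and feeds through the ``$\sim$'' of Corollary \ref{equivbesovgeneerl} except along the rearrangement identifications where equality is genuine. A secondary subtlety is that Proposition \ref{prop8} requires the singular value function to be well defined and the operator to lie in all $\mathcal L^{1+h}$ for small $h$, which is guaranteed here by the hypothesis $f\in\cap_{q>p}B^{1/q}_{q,q}(S^1)$ via Peller's theorem. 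I expect the write-up to be, as the paper itself says of Theorem \ref{Thm1}, ``a straightforward repetition'' of the Engli\v{s}--Zhang argument with $\log t$, $p-1$, and \cite[Proposition 7, 8]{EZ} replaced by $\psi(t)$, $\psi(\mathrm e^{1/h})$, and Lemma \ref{prop7} together with Proposition \ref{prop8}.
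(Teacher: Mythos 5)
Your proposal follows essentially the same route as the paper, which simply says the proof is a ``straightforward repetition'' of \cite[Theorem 2]{EZ} with $\log t$, $1/r$ and \cite[Proposition~8]{EZ} replaced by $\psi(t)$, $\psi(\mathrm e^{1/h})$ and Proposition~\ref{prop8}: reduce to $p=1$, use the rearrangement identities $\|F_f\|_{L^{1+h}(0,\infty)}^{1+h}=\int_{\mathbb D}|f''|^{1+h}(1-|z|^2)^{2h}\,\mathrm{d}m$ and $\|\Phi_f\|_{L^{1+h}}^{1+h}=\int_{S^1\times\N}|W_n*f|^{1+h}\,\mathrm{d}\nu$, apply Proposition~\ref{prop8}(c) to $H=F_f$ and $H=\Phi_f$ to obtain the genuine equalities in statements (1) and (2), and then use Peller's theorem with constants uniform near $p$ (via Corollary~\ref{equivalencescor}) to obtain the three-way equivalence with $\mathrm{Tr}_{\omega,\psi}(|H_{\bar f}|^p)$.

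One caveat on your exposition rather than the substance: the opening ``chain of equalities'' between $\int_0^t\mu_{H_{\bar f}}$, $\int_0^t F_f$ and $\int_0^t\Phi_f$ is misstated --- those Ces\`aro $\omega$-limits are only comparable (through Peller and the $B^{1/q}_{q,q,\mathbb D}$-vs-$B^{1/q}_{q,q}(S^1)$ equivalences, each with constants uniform on $[p,p_0]$ but \emph{not} tending to $1$), not equal. You do correct yourself in the last paragraph, where you note that the theorem only asserts equivalence between (1), (2) and (3), while the genuine equalities appear only along the rearrangement identifications plus Proposition~\ref{prop8}(c). Also worth making explicit is the elementary but necessary observation that the exponent $\tfrac{p}{p+h}$ and the $\psi(\mathrm e^{1/h})^{h/(p+h)}$ discrepancy between the displayed quantity in (1) and the raw $\|F_f\|_{1+h}/\psi(\mathrm e^{1/h})$ of Proposition~\ref{prop8}(c) tend to $1$ because $\psi$ is slowly varying, so that they are absorbed by the extended limit. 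With that cleanup your argument is the paper's argument.
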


The proof of this result is a straightforward repetition of that of \cite[Theorem 2]{EZ} with the replacement of $\log t$, $1/r$ and the use of \cite[Proposition 8]{EZ} by $\psi(t)$, $\psi(\mathrm{e}^{1/h})$ and the use of Proposition~\ref{prop8}, respectively.

Let us place the result Theorem \ref{Thm2} in context. Let $B^{1/q+}_{q,q}(S^1)$ denote the subspace of $B^{1/q}_{q,q}(S^1)$ consisting of holomorphic functions. By the results above, we can define two continuous linear mappings
\begin{align*}
T_{LP}:B^{1/q+}(S^1)&\to L^q(S^1\times \N,\nu), \quad T_{LP}f(z,n):=W_n*f(z),\quad\mbox{and}\\
T_{\mathbb{D}}:B^{1/q+}(S^1)&\to L^q(\mathbb{D},\mu), \quad T_{\mathbb{D}}f(z):=(1-|z|^2)^2f''(z).
\end{align*}
We define the spaces $\mathcal{M}_\psi^{(p)}(S^1\times \N,\nu)\subseteq \cap_{q>p} L^q(S^1\times \N,\nu)$ and $\mathcal{M}_\psi^{(p)}(\mathbb{D},\mu)\subseteq \cap_{q>p} L^q(\mathbb{D},\mu)$ from the families $(L^q(S^1\times \N,\nu))_{q>p}$ and $(L^q(\mathbb{D},\mu)_{q>p}$, respectively, by means of extrapolation. For any exponentiation invariant extended limit $\omega$, we can define Dixmier traces $\mathrm{tr}_{\omega,\psi}:\mathcal{M}_\psi(S^1\times \N,\nu)\to \C$ and $\mathrm{tr}_{\omega,\psi}:\mathcal{M}_\psi(\mathbb{D},\mu)\to \C$. We write $\mathrm{tr}_{\omega,\psi}$ to emphasize that the Dixmier trace is defined on a commutative von Neumann algebra. Applying Corollary \ref{equivalencescor}, we can reformulate Theorem \ref{Thm2} as the statement that 
$${\rm Tr}_{\omega,\psi} |H_{\overline{f}}|^p\sim \mathrm{tr}_{\omega,\psi}(|T_{LP}f|^p)\sim\mathrm{tr}_{\omega,\psi}(|T_{\mathbb{D}}f|^p) .$$

\section{The special case $p=2,4,6$}
\label{p246sec}

A beautiful result of Janson-Upmeier-Wallst\'en \cite{JUW} computes the operator trace of $|H_{\overline{f}}|^p$ for $p=2,4,6$ in terms of a particular Besov norm. Indeed, \cite[Theorem 1]{JUW} states that for $p=2,4,6$ and $f$ holomorphic in $\mathbb{D}$ we have that 
\begin{equation}
\label{p246}
\mathrm{Tr}(|H_{\overline{f}}|^p)=c_p\int_{S^1\times S^1} \frac{|f(z)-f(w)|^{p}}{|z-w|^2}\mathrm{d}V(z,w),
\end{equation}
where $c_2=1$, $c_4=\frac{1}{2}$ and $c_6=\frac{1}{6}$. In fact, \cite[Theorem 1]{JUW} states that $p=2,4,6$ are the only possible values for which an identity of this type could hold true.

\begin{dfn}
\label{SIdefn}
For $p>1$ we define 
$$\|f\|_{B^{1/p}_{p,p},SI}:=\left(\int_{S^1\times S^1} \frac{|f(z)-f(w)|^p}{|z-w|^2}\mathrm{d}V(z,w)\right)^{1/p}.$$
\end{dfn}

The reader should note that Equation \eqref{p246} is equivalent to the identity 
$$\|H_{\overline{f}}\|_{\mathcal{L}^p(L^2(S^1))}^p=c_p\|f\|_{B^{1/p}_{p,p},SI}^p, \quad\mbox{for $p=2,4,6$}.$$ 
The following norm equivalence is found in \cite[Appendix 2.6]{peller}.

\begin{prop}
\label{sivsdnorm}
For any $p_0\geq q_0>1$ and there is a constant $C>0$ such that 
$$C^{-1}\|f\|_{B^{1/p}_{p,p}}\leq \|f\|_{B^{1/p}_{p,p},SI}\leq C\|f\|_{B^{1/p}_{p,p}}, \quad \forall p\in [q_0,p_0].$$
\end{prop}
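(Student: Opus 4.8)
The plan is to factor the claimed equivalence through the classical description of $B^{1/p}_{p,p}(S^1)$ by first-order differences. Parametrising $z=\mathrm{e}^{i\theta}$, $w=\mathrm{e}^{i(\theta+t)}$ with $t\in(-\pi,\pi]$ and using $|\mathrm{e}^{i\theta}-\mathrm{e}^{i(\theta+t)}|=2|\sin(t/2)|$ together with the elementary two-sided bound $\tfrac{2}{\pi}|t|\le 2|\sin(t/2)|\le|t|$, Fubini's theorem gives
$$\|f\|_{B^{1/p}_{p,p},SI}^p=\int_{S^1\times S^1}\frac{|f(z)-f(w)|^p}{|z-w|^2}\,\mathrm{d}V(z,w)\;\sim\;\int_{-\pi}^{\pi}\frac{\|f(\mathrm{e}^{i(\cdot+t)})-f(\mathrm{e}^{i\cdot})\|_{L^p(S^1)}^p}{|t|^2}\,\mathrm{d}t,$$
where the implied constants lie in $[1,(\pi/2)^2]$ and, crucially, \emph{do not depend on $p$}: the weight $|t|^{-2}$ carries no power of $p$. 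The right-hand side is exactly the first-difference Besov seminorm of smoothness $s=1/p$, since $1+sp=2$; and as $p$ ranges over $[q_0,p_0]$ the exponent $s=1/p$ stays in the compact subinterval $[1/p_0,1/q_0]$ of $(0,1)$, bounded away from both endpoints. (The constant term $\hat f(0)$ is invisible to $\|\cdot\|_{B^{1/p}_{p,p},SI}$, so one should either restrict to $f$ with $\hat f(0)=0$ or, as in Theorem \ref{pellerhem}, absorb it into a finite-rank correction; I ignore it below.)

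It then remains to compare the difference seminorm with the Littlewood--Paley norm $\|f\|_{B^{1/p}_{p,p}}^p=\sum_{n\ge 1}2^{n}\|\Phi_nf\|_{L^p(S^1)}^p$. This is the classical characterisation of the diagonal Besov spaces, carried out in \cite[Appendix 2.6]{peller}. For the estimate $\gtrsim$ one uses $\widehat{W}_n(0)=0$ for $n\ge 1$ to write $\Phi_nf(\theta)=-\int W_n(t)\,\bigl(f(\mathrm{e}^{i(\theta-t)})-f(\mathrm{e}^{i\theta})\bigr)\,\mathrm{d}t$, the uniform bounds $\|W_n\|_{L^1(S^1)}\lesssim 1$ and the concentration of $W_n$ at scale $2^{-n}$, and then standard discrete-convolution estimates to sum $\sum_n 2^{n}\bigl(\int|W_n(t)|\,\|\Delta_tf\|_p\,\mathrm{d}t\bigr)^{p}\lesssim\int|t|^{-2}\|\Delta_tf\|_p^p\,\mathrm{d}t$. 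For the estimate $\lesssim$ one decomposes $\Delta_tf=\sum_n\Delta_t\Phi_nf$, applies Bernstein's inequality in the form $\|\Delta_t\Phi_nf\|_p\lesssim\min(1,2^n|t|)\,\|\Phi_nf\|_p$, a weighted H\"older inequality in $n$, and integrates the resulting geometric series in $t$; here $0<s<1$ is precisely what makes the integrals $\int_0^{2^{-n}}|t|^{p-1-sp}\mathrm{d}t$ and $\int_{2^{-n}}^{\pi}|t|^{-1-sp}\mathrm{d}t$ converge with value $\sim 2^{nsp}$.

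The genuine point, beyond the classical fixed-$p$ equivalence, is the \emph{uniformity} of the constant over $p\in[q_0,p_0]$, and this is where the work lies. Inspecting the argument above, every constant produced is a continuous function of $p$ on $(1,\infty)$: it is built from the conjugate exponent $p'=p/(p-1)$, from geometric sums such as $\sum_n 2^{-n\varepsilon p'}=(1-2^{-\varepsilon p'})^{-1}$, from the factors $\tfrac1{sp}$ and $\tfrac1{p(1-s)}$ (bounded once $s=1/p\in[1/p_0,1/q_0]$), from the uniform bound $\|W_n\|_{L^1}\lesssim 1$, and from the $L^p$ Littlewood--Paley constants, which are themselves uniformly bounded on compact subintervals of $(1,\infty)$. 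Since a continuous function on the compact set $[q_0,p_0]$ is bounded, this produces the desired $C$. Alternatively, and perhaps more cleanly, both norm families arise from real interpolation, so the equivalence on the interval follows from the equivalence at the endpoints --- exactly in the spirit of the remark following Theorem \ref{pellerhem}. I therefore expect the only real obstacle to be bookkeeping: the two norms are classically equivalent for each fixed $p$, and one must merely check that no constant degenerates as $p$ moves through $[q_0,p_0]$.
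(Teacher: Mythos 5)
The paper offers no proof of this proposition: it simply cites \cite[Appendix 2.6]{peller}. Your sketch is a correct reconstruction of the argument that that citation points to — pass from the singular integral to the first-order difference seminorm via $|z-w|=2|\sin(t/2)|\sim|t|$ and Fubini, then invoke the classical Littlewood--Paley vs.\ modulus-of-continuity equivalence — so you are essentially taking the same route, not a different one. Two small things worth flagging. First, your conversion $\|f\|_{B^{1/p}_{p,p},SI}^p\sim\int_{-\pi}^{\pi}|t|^{-2}\|\Delta_tf\|_p^p\,\mathrm{d}t$ is exactly the step that makes the Janson--Upmeier--Wallst\'en integral a \emph{Besov} quantity, and it is genuinely $p$-free, which is the clean part of the uniformity; you were right to isolate it. Second, the uniformity of the Littlewood--Paley/difference equivalence over $p\in[q_0,p_0]$ is the only real content beyond the fixed-$p$ statement, and your continuity-plus-compactness argument handles it; the ``interpolation of the whole family'' alternative you float is vaguer (one would have to interpolate the two norm scales simultaneously and track the interpolation constants), so it is better left as a remark than relied upon. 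Your observation that $\|\cdot\|_{B^{1/p}_{p,p},SI}$ annihilates constants whereas $\|\cdot\|_{B^{1/p}_{p,p}}$ retains $\Phi_0f=\hat f(0)$ is also correct — strictly speaking the stated two-sided bound fails for constant $f$, and the proposition should be read modulo constants or for $f$ with $\hat f(0)=0$, as the later applications in the paper implicitly assume.
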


The result of Janson-Upmeier-Wallst\'en together with Theorem \ref{pellerhem} and Proposition \ref{sivsdnorm} allow us to deduce the following proposition.

\begin{prop}
\label{equiforjuw}
There are constants $0<r< R<\infty$ and measurable functions $c_0,c_1:[3/2,8]\to [r,R]$ such that 
$$c_0(p)\|f\|_{B^{1/p}_{p,p},SI}\leq \|H_{\overline{f}}\|_{\mathcal{L}^p(L^2(S^1))}\leq c_1(p)\|f\|_{B^{1/p}_{p,p},SI}.$$
Moreover, we can choose $c_0$ and $c_1$ so that
$$\lim_{h\to 0} c_0(p+h)^{\frac{1}{p}}=\lim_{h\to 0} c_1(p+h)^{\frac{1}{p}}=c_p\quad\mbox{for $p=2,4,6$}.$$
\end{prop}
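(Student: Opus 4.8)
The plan is to deduce the statement by combining the three ingredients already in place: Peller's Theorem~\ref{pellerhem} (taken with $p_0=8$), the norm comparison Proposition~\ref{sivsdnorm} (with $q_0=3/2$, $p_0=8$), and the Janson-Upmeier-Wallst\'en identity \eqref{p246}. A preliminary observation is that both sides of the asserted inequality are insensitive to the constant term of $f$: the $SI$-norm depends only on the differences $f(z)-f(w)$, and $H_{\bar f}=H_{\overline{f-f(0)}}$ since $H_1=(1-P)P=0$, so we may assume $f(0)=0$ throughout. Chaining Peller's estimate with Proposition~\ref{sivsdnorm} on the common interval $[3/2,8]$ produces one universal pair $0<r\le R<\infty$ with $r\,\|f\|_{B^{1/q}_{q,q},SI}\le \|H_{\bar f}\|_{\mathcal L^q(L^2(S^1))}\le R\,\|f\|_{B^{1/q}_{q,q},SI}$ for all holomorphic $f$ and all $q\in[3/2,8]$; this already gives the first assertion with the crude choice $c_0\equiv r$, $c_1\equiv R$.

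For the refined choice I would pass to the best constants. Set, for $q\in[3/2,8]$,
$$\overline c(q):=\sup_{f\neq 0,\ f(0)=0}\frac{\|H_{\bar f}\|_{\mathcal L^q}}{\|f\|_{B^{1/q}_{q,q},SI}},\qquad \underline c(q):=\inf_{f\neq 0,\ f(0)=0}\frac{\|H_{\bar f}\|_{\mathcal L^q}}{\|f\|_{B^{1/q}_{q,q},SI}},$$
the extrema running over holomorphic $f$. By the previous paragraph $r\le\underline c(q)\le\overline c(q)\le R$, and since \eqref{p246} for $p\in\{2,4,6\}$ reads $\|H_{\bar f}\|_{\mathcal L^p}=c_p^{1/p}\|f\|_{B^{1/p}_{p,p},SI}$ for \emph{every} $f$, we obtain $\underline c(p)=\overline c(p)=c_p^{1/p}$ for $p=2,4,6$. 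Taking $c_0:=\underline c$ and $c_1:=\overline c$ then gives the two-sided inequality for all holomorphic $f$. What remains is to check that $\underline c,\overline c$ are measurable and continuous at $p=2,4,6$; once this is known, $c_0(p+h)$ and $c_1(p+h)$ tend to $c_p^{1/p}$ as $h\to 0$, which is the ``moreover'' part (after raising to the appropriate power one reads off the displayed limit $c_p$).

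The continuity is the heart of the matter. Polynomials with $f(0)=0$ are dense in each $B^{1/q}_{q,q}(S^1)$, and, by the equivalence above, $f\mapsto \|H_{\bar f}\|_{\mathcal L^q}/\|f\|_{B^{1/q}_{q,q},SI}$ is continuous on nonzero symbols; hence the $\sup$ and $\inf$ defining $\overline c(q)$ and $\underline c(q)$ are unchanged if restricted to polynomials. For a fixed polynomial $f$ the Hankel operator $H_{\bar f}$ has finite rank, so $q\mapsto\|H_{\bar f}\|_{\mathcal L^q}^q=\sum_k s_k(H_{\bar f})^q$ and $q\mapsto\|f\|_{B^{1/q}_{q,q},SI}^q=\int_{S^1\times S^1}|f(z)-f(w)|^q|z-w|^{-2}\,\mathrm dV$ are real-analytic in $q$, and their ratio is continuous in $q$. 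Consequently $\overline c$ is lower semicontinuous and $\underline c$ is upper semicontinuous, so both are measurable; the content is the opposite semicontinuities at $p\in\{2,4,6\}$. For these I would take an extremal sequence $(f_n)$ with $q_n\to p$, $\|f_n\|_{B^{1/q_n}_{q_n,q_n},SI}=1$, and $\|H_{\overline{f_n}}\|_{\mathcal L^{q_n}}\to\limsup_{q\to p}\overline c(q)$ (respectively $\liminf_{q\to p}\underline c(q)$); the uniform equivalence embeds $\{f_n\}$ into a bounded subset of the fixed reflexive space $B^{1/8}_{8,8}(S^1)$, so along a subsequence $f_n\rightharpoonup f$ weakly there and, by the Rellich-type compact embedding, $f_n\to f$ in $L^2(S^1)$, whence $f_n(z)-f_n(w)\to f(z)-f(w)$ in $L^2(S^1\times S^1)$; passing to the limit in the Schatten norm and in the singular integral, and comparing with the value $c_p^{1/p}$ of the ratio at $q=p$, yields $\limsup_{q\to p}\overline c(q)\le c_p^{1/p}\le\liminf_{q\to p}\underline c(q)$.

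The one genuinely non-formal point is this passage to the limit in the singular integral: because $|z-w|^{-2}\,\mathrm dV$ is not integrable near the diagonal, one must bound $\int_{|z-w|\le\delta}|f_n(z)-f_n(w)|^{q_n}|z-w|^{-2}\,\mathrm dV$ uniformly in $n$, i.e.\ rule out that $SI$-mass of the extremal sequence escapes into arbitrarily high Littlewood-Paley frequencies in the limit. Here Proposition~\ref{sivsdnorm} re-enters in localized form, identifying the near-diagonal part of the $SI$-integral with a high-frequency tail of the Littlewood-Paley square function; together with boundedness of $(f_n)$ in the fixed Besov space this gives the required uniformity. The escaping parts are in any case harmless: for $f_n$ proportional to $z^{N_n}$ with $N_n\to\infty$ a direct computation of the singular values of $H_{\overline{z^{N}}}$ and of the integral in \eqref{p246} shows that the corresponding ratios already converge to $c_p^{1/p}$, so such profiles cannot push $\overline c$ above, or $\underline c$ below, its value at $p$. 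Granting this uniformity, the semicontinuity estimates close, $\overline c$ and $\underline c$ are continuous at $p=2,4,6$, and the proposition follows.
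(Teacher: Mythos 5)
Your framing is the right one: since any admissible pair $c_0\le\underline c\le\overline c\le c_1$ with $c_0(p+h),c_1(p+h)\to c_p^{1/p}$ forces both extremal constants $\underline c(q)=\inf_f\|H_{\bar f}\|_{\mathcal{L}^q}/\|f\|_{B^{1/q}_{q,q},SI}$ and $\overline c(q)=\sup_f\|H_{\bar f}\|_{\mathcal{L}^q}/\|f\|_{B^{1/q}_{q,q},SI}$ to converge to $c_p^{1/p}$ as $q\to p$ for $p\in\{2,4,6\}$, the content of the ``moreover'' clause is exactly the two-sided continuity of $\underline c,\overline c$ at those points, and your reduction to the ``wrong-direction'' semicontinuities is correct. (Incidentally you are right that the exponent in the statement should be $p$ rather than $1/p$: the JUW identity gives $c_0(p)=c_1(p)=c_p^{1/p}$, so $c_0(p+h)^{p}\to c_p$.) Note also that the paper supplies no argument for this proposition beyond the sentence preceding it citing Peller, JUW and Proposition~\ref{sivsdnorm}, so there is no detailed proof in the paper to compare against; the hard part is left entirely to the reader, and your proposal is an honest attempt to supply it.

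The gap is exactly where you flag it, and it is not repaired by the remarks that follow. From $f_n\rightharpoonup f$ in $B^{1/8}_{8,8}$ together with $f_n\to f$ in $L^2$ one cannot conclude $\|H_{\overline{f_n}}\|_{\mathcal{L}^{q_n}}^{q_n}\to\|H_{\bar f}\|_{\mathcal{L}^p}^p$ or $\|f_n\|_{B^{1/q_n}_{q_n,q_n},SI}^{q_n}\to\|f\|_{B^{1/p}_{p,p},SI}^p$. Schatten norms are only \emph{lower} semicontinuous under weak convergence of operators, whereas for $\overline c$ you need an upper bound; moreover strong $L^2$ convergence of the symbols does not even give operator-norm convergence of the Hankel operators, since $\|H_{\overline{f_n-f}}\|$ is controlled by $\|f_n-f\|_{BMO}$, not by $\|f_n-f\|_{L^2}$, so the tail of the singular-value sum is genuinely uncontrolled. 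The singular integral has the analogous defect near the diagonal. The proposed repair via a concentration-compactness dichotomy (compactness $\Rightarrow$ pass to the limit; escape to high frequency $\Rightarrow$ ratio $\to c_p^{1/p}$ as for $z^{N_n}$) is a plausible program, but you would have to actually produce the profile decomposition, show that \emph{every} escaping profile attains the ratio $c_p^{1/p}$ asymptotically (not only the monomials), and control the cross terms; none of this is carried out, and appealing to a ``localized'' Proposition~\ref{sivsdnorm} is a gesture, not an estimate. As written, the required continuity of $\underline c$ and $\overline c$ at $p=2,4,6$ is not established.
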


For $p>1$ and the scale of spaces $(B^{1/q}_{q,q}(S^1))_{q\in [p,p+1]}$ we let $B_{p,\psi}(S^1)$ denote the associated extrapolation space. Using Corollary \ref{equivalencescor}, we deduce the following theorem from Proposition \ref{equiforjuw}.

\begin{thm}
For $p=2,4,6$, and a holomorphic $f\in B_{p,\psi}(S^1)$, we have that 
$$\mathrm{Tr}_{\omega,\psi}(|H_f|^p)= c_p\lim_{h\to  \tilde{\omega}} \frac{1}{\psi(\mathrm{e}^{1/h})}\int_{S^1\times S^1} \frac{|f(z)-f(w)|^{p+h}}{|z-w|^2}\mathrm{d}V(z,w),$$
where $c_2=1$, $c_4=\frac{1}{2}$ and $c_6=\frac{1}{6}$.
\end{thm}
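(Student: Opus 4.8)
The plan is to read the statement off directly from part~A of Corollary~\ref{equivalencescor}, feeding Proposition~\ref{equiforjuw} into it as the decisive input. Fix $p\in\{2,4,6\}$. I would apply Corollary~\ref{equivalencescor} in the Schatten setting, so that $\mathcal{L}^{p+h}=\mathcal{L}^{p+h}(L^2(S^1))$ as in Theorem~\ref{pellerhem}, with the scale of Banach spaces $X_h$ being the space of holomorphic functions in $B^{1/(p+h)}_{p+h,p+h}(S^1)$ equipped with the norm $\|\cdot\|_{B^{1/(p+h)}_{p+h,p+h},SI}$ of Definition~\ref{SIdefn}, for $h\in[0,1]$, and with the mapping $T$ being $f\mapsto H_{\overline{f}}$. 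By Theorem~\ref{pellerhem} together with Proposition~\ref{sivsdnorm} this restricts to a bounded operator $X_h\to\mathcal{L}^{p+h}$ for every $h\in[0,1]$; the continuous inclusions $X_h\subseteq X_{h'}$ for $h<h'$ are the critical-index Besov embeddings $B^{1/q_1}_{q_1,q_1}(S^1)\hookrightarrow B^{1/q_2}_{q_2,q_2}(S^1)$ for $q_1\le q_2$; and $B_{p,\psi}(S^1)$ is by construction the associated extrapolation space, so that the hypothesis $f\in B_{p,\psi}(S^1)$ guarantees $H_{\overline{f}}\in\mathcal{M}_\psi^{(p)}$ and hence the finiteness of all the quantities below. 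Finally, as $h$ ranges over $[0,1]$ the exponent $p+h$ stays inside $[3/2,8]$ --- this is exactly where the restriction $p\in\{2,4,6\}$ enters --- so Proposition~\ref{equiforjuw}, after reindexing $h\mapsto c_i(p+h)$, supplies the measurable functions $c_0,c_1$ with values in a fixed interval $[r,R]\subset(0,\infty)$ together with the two-sided estimate $c_0(p+h)\|f\|_{X_h}\le\|H_{\overline{f}}\|_{\mathcal{L}^{p+h}}\le c_1(p+h)\|f\|_{X_h}$. This is precisely the data Corollary~\ref{equivalencescor} asks for.

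I would then invoke the normalization assertion of Proposition~\ref{equiforjuw}: since $c_0(p+h)/c_1(p+h)\to1$ as $h\to0$, the ``in particular'' clause of part~A of Corollary~\ref{equivalencescor} applies and produces the exact identity
\[
\mathrm{Tr}_{\omega,\psi}(|H_{\overline{f}}|^p)=\lim_{h\to\tilde{\omega}}\frac{\gamma(h)}{\psi(\mathrm{e}^{1/h})}\,\|f\|_{B^{1/(p+h)}_{p+h,p+h},SI}^{\,p+h},
\]
where $\gamma(h)$ is the prefactor Corollary~\ref{equivalencescor} builds out of $c_0(p+h)$. The role of Proposition~\ref{equiforjuw} --- and through it of the Janson--Upmeier--Wallst\'en identity~\eqref{p246}, which pins down $\|H_{\overline{f}}\|_{\mathcal{L}^p}$ in terms of $\|f\|_{B^{1/p}_{p,p},SI}$ at the exponent $p$ itself --- is precisely that $\gamma(h)\to c_p$ as $h\to0$.

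The remaining step is to pull this slowly varying factor out of the extended limit. Because $f\in B_{p,\psi}(S^1)$, the function $h\mapsto\psi(\mathrm{e}^{1/h})^{-1}\|f\|_{B^{1/(p+h)}_{p+h,p+h},SI}^{\,p+h}$ is bounded on a neighbourhood of $h=0$; multiplying it by $\gamma(h)-c_p$, which tends to $0$, yields a function tending to $0$ as $h\to0$, hence one annihilated by the extended limit $\tilde{\omega}$ at $0$. Therefore
\[
\mathrm{Tr}_{\omega,\psi}(|H_{\overline{f}}|^p)=c_p\lim_{h\to\tilde{\omega}}\frac{1}{\psi(\mathrm{e}^{1/h})}\,\|f\|_{B^{1/(p+h)}_{p+h,p+h},SI}^{\,p+h}=c_p\lim_{h\to\tilde{\omega}}\frac{1}{\psi(\mathrm{e}^{1/h})}\int_{S^1\times S^1}\frac{|f(z)-f(w)|^{p+h}}{|z-w|^2}\,\mathrm{d}V(z,w),
\]
the last equality being Definition~\ref{SIdefn}. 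I expect the only non-bookkeeping step to be this extraction of the limiting constant --- or more precisely the verification, contained in Proposition~\ref{equiforjuw}, that $\gamma(h)\to c_p$, which is a continuity/normalization statement about the comparison constants $c_0,c_1$ at the endpoint $h=0$; everything else reduces to checking that the hypotheses of Corollary~\ref{equivalencescor} are met.
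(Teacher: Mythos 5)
Your proposal is correct and follows exactly the route the paper sketches (the paper gives no proof beyond ``Using Corollary~\ref{equivalencescor}, we deduce the following theorem from Proposition~\ref{equiforjuw}''): set up the data of Corollary~\ref{equivalencescor} with $T:f\mapsto H_{\overline{f}}$, the Besov spaces in the $\|\cdot\|_{B^{1/q}_{q,q},SI}$ norm, and the comparison constants from Proposition~\ref{equiforjuw}, then invoke the ``in particular'' clause of part~A, and finally pull the slowly varying prefactor $\gamma(h)\to c_p$ out of $\tilde\omega$, which you justify correctly by boundedness of $h\mapsto\psi(\mathrm{e}^{1/h})^{-1}\|f\|^{p+h}_{B^{1/(p+h)}_{p+h,p+h},SI}$ near $h=0$. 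One small remark: your reasoning gives $c_0(p+h)\to c_p^{1/p}$ (so that $\gamma(h)=c_0(p+h)^p\to c_p$), which is what the Janson--Upmeier--Wallst\'en identity $\|H_{\overline{f}}\|_{\mathcal{L}^p}^p=c_p\|f\|_{B^{1/p}_{p,p},SI}^p$ forces; the exponent $1/p$ rather than $p$ in the displayed limit of Proposition~\ref{equiforjuw} appears to be a typo in the paper, and your derivation silently uses the corrected version, which is the right thing to do.
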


\begin{rem}
The special case $p=2$ and $f\in C^{1/2}(S^1)$ was considered in \cite{GG}, where explicit formulas for $\mathrm{Tr}_\omega(|H_{\overline{f}}|^2)$ was given in terms of the Fourier series of $f$.
\end{rem}

\section{Non-measurability}
\label{Non-meas}

The estimates for Dixmier traces will allow us to construct an abundance of non-measurable Hankel operators by means of lacunary Fourier series. Our approach is based on results from \cite[Section 6]{EZ}. For $p\in [1,\infty)$ and $c\in \ell^\infty(\N)$ we define the function $\gamma_{p,c}$ on $S^1$ by 
$$\gamma_{p,c}(z):=\sum_{j=0}^\infty 2^{-j/p}c_j z^{2^j}.$$
The function $\gamma_{p,c}$ is holomorphic in $\mathbb{D}$. We can compute that 
$$\Phi(t)=2^{-j/p} c_j, \quad t\in [2^j-1,2^{j+1}-1).$$
Therefore, $\|\gamma_{p,c}\|_{B^{1/p}_{p,p}}\sim \|\Phi\|_{L^p(0,\infty)}\sim \|c\|_{\ell^p(\N)}$. Moreover, we can as in \cite[Page 351]{EZ} compute that for $t\in [2^j-1,2^{j+1}-1)$
\begin{equation}
\label{equistuff}
\frac{\sum_{k=0}^{j-1} |c_k|^p}{\psi(2^j-1)}\lesssim \frac{1}{\psi(t)}\int_0^t \Phi(t)^p\mathrm{d}t\lesssim\frac{\sum_{k=0}^{j} |c_k|^p}{\psi(2^{j+1}-1)}.
\end{equation}

Define the function $\tilde{\psi}(t):=\psi(2^t-1)$. This is again an increasing function with $\tilde{\psi}(0)=0$ and $\lim_{t\to \infty}\tilde{\psi}(t)=\infty$. If $\psi$ satisfies \eqref{apsicond}, then $\tilde{\psi}$ has regular variation of index $k_\psi:=\log A_\psi(e)$. We write $\mathfrak{m}_{\tilde{\psi}}^{(p)}(\N):=\mathcal{M}_{\tilde{\psi}}^{(p)}(\ell^\infty(\N))$. The inequalities \eqref{equistuff} imply that 
$$\|c\|_{\mathfrak{m}_{\tilde{\psi}}^{(p)}}^p\sim \sup_{t>0}\frac{1}{\psi(t)}\int_0^t \Phi(t)^p\mathrm{d}t\sim \|\gamma_{p,c}\|_{B_{p,\psi}}^p,$$ 
so the map $c\mapsto \gamma_{p,c}$ defines a continuous mapping 
$$\gamma:\mathfrak{m}_{\tilde{\psi}}^{(p)}(\N)\to B_{p,\psi}.$$

It follows from Theorem \ref{Thm2} and the inequalities \eqref{equistuff} that for any exponentiation invariant extended limit $\omega$ we have
\begin{align*}
\mathrm{Tr}_{\omega,\psi}(|H_{\overline{\gamma_{p,c}}}|^p) &\sim \lim_{t\to \omega} \frac{1}{\psi(t)}\int_0^t \Phi(t)^p\mathrm{d}t \sim \lim_{t\to \omega} \frac{\sum_{k=0}^{\lfloor \log_2 t \rfloor} |c_k|^p}{\psi(2^{\lfloor \log_2 t \rfloor+1}-1)} \\
&= \lim_{t\to \omega\circ \log_2} \frac{\sum_{k=0}^{\lfloor  t \rfloor} |c_k|^p}{\psi(2^{\lfloor t \rfloor+1}-1)}=\lim_{t\to \omega\circ \log_2} \frac{\sum_{k=0}^{\lfloor  t \rfloor} |c_k|^p}{\tilde\psi( t )} .
\end{align*}
Denote
\begin{align}
\label{small_DT}
\mathrm{tr}_{\omega\circ \log_2,\tilde{\psi}}(x):= \lim_{t\to\omega\circ \log_2} \frac{\sum_{k=0}^{\lfloor  t \rfloor} x_k}{\tilde\psi( t )}, \quad x \in \mathfrak{m}_{\tilde{\psi}}(\N)_+.
\end{align}
Here we note that $\mathrm{tr}_{\omega\circ \log_2,\tilde{\psi}}$ extends to a singular linear functional on $\mathfrak{m}_{\tilde{\psi}}(\N)$ because it is the Dixmier trace $\mathrm{tr}_{\omega,\psi}$ on $\mathfrak{m}_{\psi}(\N)$ pulled back along the isometric order preserving embedding $\mathfrak{m}_{\tilde{\psi}}(\N)\hookrightarrow \mathfrak{m}_{\psi}(\N)$ defined by $b=(b_n)_{n\in \N}\mapsto (b_{\log_2(n)}\chi_{2^\N}(n))_{n\in \N}$. Here $2^\N=\{1,2,4,8,16,\ldots\}$ denotes the dyadic natural numbers. 
It should be pointed out that the ideal $\mathfrak{m}_{\tilde{\psi}}(\N)$ supports Dixmier traces defined directly from $\tilde{\psi}$ if and only if $A_\psi(e)=1$ (in which case $\tilde{\psi}$ has regular variation of index $k_\psi=0$).

Summing up, there are constants $\alpha_0,\alpha_1>0$ such that for any exponentiation invariant extended limit $\omega$, and $c\in \mathfrak{m}_{\tilde{\psi}}^{(p)}$
\begin{equation}
\label{estofrtrTr}
\alpha_0 \mathrm{tr}_{\omega\circ \log_2,\tilde{\psi}}(|c|^p)\leq \mathrm{Tr}_{\omega,\psi}(|H_{\overline{\gamma_{p,c}}}|^p)\leq \alpha_1 \mathrm{tr}_{\omega\circ \log_2,\tilde{\psi}}(|c|^p).
\end{equation}

For a function $g\in L^\infty(0,\infty)$ define the function $\bar{g}\in L^\infty(0,\infty)$ by the formula 
$$\bar{g}(t):=\int_{\lfloor t \rfloor }^{\lfloor t \rfloor+1}g(s)\mathrm{d} s.$$ 
Set $C:=-\liminf_{t\to \infty} g(t)$ and define
\begin{equation}
\label{cnfromg}
c_n:=\left( |\bar{g}(n)+C|\cdot \tilde \psi'(n)\right)^{1/p}, \ n\ge0.
\end{equation}
It is easy to see that $c=|c|\in \mathfrak{m}_{\tilde{\psi}}^{(p)}(\N)$.

\begin{lem}
\label{integracom}
Assume that $g\in L^\infty(0,\infty)$ for some $\beta>0$ satisfies that 
\begin{equation}
\label{conditionongorfor}
g(t)-\bar{g}( t)= O(t^{-\beta}), \quad \mbox{as $t\to \infty$}.
\end{equation}
For $c=(c_n)_{n\in \N}\in\mathfrak{m}^{(p)}_{\tilde{\psi}}(\N)$ defined as in Equation \eqref{cnfromg}, it holds that 
$$\mathrm{tr}_{\omega\circ \log_2,\tilde{\psi}}(|c|^p)=\lim_{t\to\omega\circ \log_2} \frac{1}{\tilde{\psi}(t)}\int_0^t g(s)\cdot \tilde \psi'(s) \mathrm{d}s+C,$$
where $C=-\liminf_{t\to \infty} g(t)$.
\end{lem}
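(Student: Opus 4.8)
The plan is to unwind the definition of $\mathrm{tr}_{\omega\circ\log_2,\tilde\psi}(|c|^p)$ using the explicit form \eqref{cnfromg} of $c_n$, and then compare the resulting discrete sum with the integral $\int_0^t g(s)\tilde\psi'(s)\,\mathrm{d}s$. First I would write, for $c_n=(|\bar g(n)+C|\cdot\tilde\psi'(n))^{1/p}$,
$$\sum_{k=0}^{\lfloor t\rfloor}|c_k|^p=\sum_{k=0}^{\lfloor t\rfloor}(\bar g(k)+C)\tilde\psi'(k),$$
using that $\bar g(k)+C\ge 0$ for $k$ large (by the choice $C=-\liminf g$ — this needs a small argument, since $\bar g$ is an average of $g$ over a unit interval, its liminf is at least $\liminf g=-C$, so $\bar g+C\ge 0$ eventually; the finitely many early terms where this could fail do not affect the singular functional, as they contribute a term vanishing after division by $\tilde\psi(t)\to\infty$). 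So modulo a negligible term, the numerator in $\mathrm{tr}_{\omega\circ\log_2,\tilde\psi}(|c|^p)$ is $\sum_{k=0}^{\lfloor t\rfloor}(\bar g(k)+C)\tilde\psi'(k)$.

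Next I would split off the constant: $\sum_{k=0}^{\lfloor t\rfloor}C\tilde\psi'(k)$ should be compared to $C\tilde\psi(t)$, which after division by $\tilde\psi(t)$ gives $C$ in the limit — this is again a Riemann-sum/Karamata-type estimate using that $\tilde\psi$ has regular variation of index $k_\psi$ and $\tilde\psi'$ is (essentially) monotone-ish so that $\sum_{k\le \lfloor t\rfloor}\tilde\psi'(k)=\int_0^{\lfloor t\rfloor}\tilde\psi'(s)\,\mathrm{d}s+O(\tilde\psi'(\lfloor t\rfloor))$, and $\tilde\psi'(t)/\tilde\psi(t)\to 0$. It then remains to show
$$\lim_{t\to\omega\circ\log_2}\frac{1}{\tilde\psi(t)}\left(\sum_{k=0}^{\lfloor t\rfloor}\bar g(k)\tilde\psi'(k)-\int_0^t g(s)\tilde\psi'(s)\,\mathrm{d}s\right)=0.$$

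For this last step I would compare both expressions to $\int_0^{\lfloor t\rfloor+1}\bar g(s)\tilde\psi'(s)\,\mathrm{d}s$ in two stages. On the one hand, $\sum_{k=0}^{\lfloor t\rfloor}\bar g(k)\tilde\psi'(k)-\int_0^{\lfloor t\rfloor+1}\bar g(s)\tilde\psi'(s)\,\mathrm{d}s=\sum_{k=0}^{\lfloor t\rfloor}\bar g(k)\big(\tilde\psi'(k)-\int_k^{k+1}\tilde\psi'(s)\,\mathrm{d}s\big)$, and since $g$ (hence $\bar g$) is bounded and $\tilde\psi'$ is slowly-varying-ish with $\tilde\psi'(k)-\tilde\psi'(k+1)$ summable-against-nothing (more precisely, the telescoping/variation of $\tilde\psi'$ over $[0,t]$ is controlled by $\tilde\psi'(0)$ plus lower-order terms, and in any case $o(\tilde\psi(t))$), this difference is $o(\tilde\psi(t))$. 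On the other hand, $\int_0^t g(s)\tilde\psi'(s)\,\mathrm{d}s-\int_0^{\lfloor t\rfloor+1}\bar g(s)\tilde\psi'(s)\,\mathrm{d}s$ splits into a boundary term $\int_t^{\lfloor t\rfloor+1}$, which is $O(\tilde\psi'(t))=o(\tilde\psi(t))$, plus $\int_0^{\lfloor t\rfloor+1}(g(s)-\bar g(s))\tilde\psi'(s)\,\mathrm{d}s$. This is exactly where hypothesis \eqref{conditionongorfor} enters: $|g(s)-\bar g(s)|=O(s^{-\beta})$, so $\big|\int_0^{T}(g-\bar g)\tilde\psi'(s)\,\mathrm{d}s\big|\lesssim \int_0^T s^{-\beta}\tilde\psi'(s)\,\mathrm{d}s$, and since $\tilde\psi$ is slowly varying (when $k_\psi=0$) or regularly varying of positive index $k_\psi$, a Karamata argument shows $\int_0^T s^{-\beta}\tilde\psi'(s)\,\mathrm{d}s=o(\tilde\psi(T))$ — the weight $s^{-\beta}$ decays faster than any power correction absorbed into $\tilde\psi$. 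Combining the three estimates and dividing through by $\tilde\psi(t)$, then applying $\omega\circ\log_2$, gives the claim.

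The main obstacle I anticipate is the last Karamata-type estimate $\int_0^T s^{-\beta}\tilde\psi'(s)\,\mathrm{d}s=o(\tilde\psi(T))$: one must be careful because $\tilde\psi'$ need not be monotone and $\tilde\psi$ may have positive regular-variation index $k_\psi$, so one cannot naively bound $\tilde\psi'(s)$ by $\tilde\psi(s)/s$ up to constants without invoking the smoothness reduction (allowed by \cite[Theorem 1.8.2]{RegVar}) and the monotone density theorem for regular variation; the decay $s^{-\beta}$ does however give a genuine gain, since $\int^T s^{-\beta}\cdot s^{k_\psi-1}L(s)\,\mathrm{d}s$ with $L$ slowly varying is $O(T^{k_\psi-\beta}L(T))=o(\tilde\psi(T))$ by Karamata. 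A secondary bookkeeping point is justifying that $\bar g+C\ge 0$ eventually and that the finitely many exceptional terms, together with all the $O(\tilde\psi'(t))$ and Riemann-sum error terms, are uniformly $o(\tilde\psi(t))$ and hence killed by the extended limit $\omega\circ\log_2$ at $\infty$.
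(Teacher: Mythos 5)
Your proposal takes essentially the same route as the paper's proof: (i) pass from $\sum|c_k|^p$ to $\sum(\bar{g}(k)+C)\tilde{\psi}'(k)$ by dropping the absolute value (exploiting $C=-\liminf g$), (ii) replace the sum by the integral $\int_0^t(\bar{g}+C)\tilde{\psi}'$, (iii) replace $\bar{g}$ by $g$ using $|g-\bar{g}|=O(t^{-\beta})$ together with a Karamata estimate via \cite[Theorem 1.5.11]{RegVar}, and (iv) extract the constant $C$ — these are precisely the steps in the paper, which (in its standing context $k_\psi\neq 0$) simply chooses $\beta<k_\psi$ to make the Karamata estimate clean. One small imprecision to flag: $C=-\liminf g$ only gives $\liminf(\bar{g}+C)\geq 0$, not $\bar{g}+C\geq 0$ eventually; what actually closes step (i), and what the paper records, is the weaker but correct statement $|\bar{g}+C|-(\bar{g}+C)=o(1)$, which suffices because an $o(1)$ factor against $\tilde{\psi}'$ integrates to $o(\tilde{\psi}(t))$ and is then killed by the extended limit.
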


\begin{proof}
By the definition of liminf, we have that $g+C-|g+C|=o(1)$. It follows that the function
$$\frac{1}{\tilde{\psi}(t)}\int_0^t \sum_{j=0}^\infty|\bar{g}(j)+C|\cdot \tilde \psi'(j) \chi_{(j,j+1]}(s) \mathrm{d}s - \frac{1}{\tilde{\psi}(t)}\int_0^t \sum_{j=0}^\infty(\bar{g}(j)+C)\cdot \tilde \psi'(j) \chi_{(j,j+1]}(s)\mathrm{d}s$$
is $o(1)$ as $\to\infty$.
We therefore have
\begin{align*}
\mathrm{tr}_{\omega\circ \log_2,\tilde{\psi}}(|c|^p)&=\lim_{t\to\omega\circ \log_2} \frac{1}{\tilde{\psi}(t)}\int_0^t \sum_{j=0}^\infty|c_j|^p \chi_{(j,j+1]}(s) \mathrm{d}s\\
&=\lim_{t\to\omega\circ \log_2} \frac{1}{\tilde{\psi}(t)}\int_0^t \sum_{j=0}^\infty| \bar{g}(j)+C|\cdot \tilde \psi'(j) \chi_{(j,j+1]}(s) \mathrm{d}s\\
&=\lim_{t\to\omega\circ \log_2}\frac{1}{\tilde{\psi}(t)}\int_0^t \sum_{j=0}^\infty(\bar{g}(j)+C)\cdot \tilde \psi'(j) \chi_{(j,j+1]}(s)\mathrm{d}s.
\end{align*}

The function $\tilde{\psi}$ has regular variation, so \cite[Theorem 1.5.11]{RegVar} implies that $\frac{\tilde \psi'(t)}{\tilde \psi(t)}=o(1)$ 
as $t\to \infty$. In particular, 
$$\frac{1}{\tilde{\psi}(t)}\int_0^t \sum_{j=0}^\infty(\bar{g}(j)+C)\cdot \tilde \psi'(j) \chi_{(j,j+1]}(s) \mathrm{d}s-\frac{1}{\tilde{\psi}(t)}\int_0^t (\bar{g}(s)+C)\cdot \tilde \psi'(s) \mathrm{d}s=o(1).$$

Consider
\begin{align*}
&\left|\frac{1}{\tilde{\psi}(t)}\int_0^t (\bar{g}(s)+C)\cdot \tilde \psi'(s) \mathrm{d}s - \frac{1}{\tilde{\psi}(t)}\int_0^t (g(s)+C)\cdot \tilde \psi'(s) \mathrm{d}s\right|\\
&\leq \frac{1}{\tilde{\psi}(t)}\int_0^t |g(s)-\bar{g}(s)| \tilde \psi'(s)\mathrm{d}s. 
\end{align*}
Since $|g(t)-\bar{g}(t)|\leq \rho t^{-\beta}$ for $t\ge t_0$ for some $t_0>0$ and constant $\rho$, it follows that
\begin{align*}
&\left|\frac{1}{\tilde{\psi}(t)}\int_0^t (\bar{g}(s)+C)\cdot \tilde \psi'(s) \mathrm{d}s - \frac{1}{\tilde{\psi}(t)}\int_0^t (g(s)+C)\cdot \tilde \psi'(s) \mathrm{d}s\right|\\
&\leq \frac{2\|g\|_{L^\infty}\tilde{\psi}(t_0)}{\tilde{\psi}(t)}+\frac{\rho}{\tilde{\psi}(t)}\int_{t_0}^ts^{-\beta}\tilde \psi'(s)\mathrm{d}s.
\end{align*}
Since $\tilde{\psi}$ has regular variation of index $k_\psi$, it follows that $\tilde{\psi}'$ has regular variation of index $k_\psi - 1$ and by \cite[Theorem 1.5.11]{RegVar} we have
\begin{equation}\label{eq111}
\lim_{t\to\infty}\frac{\int_{t_0}^ts^{-\beta}\tilde \psi'(s)\mathrm{d}s}{t^{1-\beta}\tilde \psi'(t)}=\frac{1}{k_\psi-\beta}.
\end{equation}

Of course, $\beta$ can be chosen to be less than $k_\psi$. Hence,
\begin{align*}
&\left|\frac{1}{\tilde{\psi}(t)}\int_0^t (\bar{g}(s)+C)\cdot \tilde \psi'(s) \mathrm{d}s - \frac{1}{\tilde{\psi}(t)}\int_0^t (g(s)+C)\cdot \tilde \psi'(s) \mathrm{d}s\right|\\
&=o(1)+O\left( \frac{t^{1-\beta}\tilde \psi'(t)}{\tilde{\psi}(t)}\right).
\end{align*}

Since $k_\psi\neq 0$, it follows from~\eqref{eq111} that the latter estimate is, in fact, $o(1)$ and 
we conclude that condition \eqref{conditionongorfor} on $g$ ensures that
$$\frac{1}{\tilde{\psi}(t)}\int_0^t (\bar{g}(s)+C)\cdot \tilde \psi'(s) \mathrm{d}s - \frac{1}{\tilde{\psi}(t)}\int_0^t (g(s)+C)\cdot \tilde \psi'(s) \mathrm{d}s=o(1),$$
as $t\to \infty$.

Using the properties of extended limits, we obtain
\begin{align*}
\mathrm{tr}_{\omega\circ \log_2,\tilde{\psi}}(|c|^p)&=\lim_{t\to\omega\circ \log_2} \frac{1}{\tilde{\psi}(t)}\int_0^t (g(s)+C)\cdot \tilde \psi'(s) \mathrm{d}s\\
&=\lim_{t\to\omega\circ \log_2} \frac{1}{\tilde{\psi}(t)}\int_0^t g(s)\tilde \psi'(s) \mathrm{d}s+C.
\end{align*}
\end{proof}

Let $\mathrm{Lip}[0,\infty)$ denote the space of Lipschitz continuous functions on $[0,\infty)$. We define the space 
$$\mathcal{W}:=\left\{h\in L^\infty(0,\infty)\cap \mathrm{Lip}[0,\infty): \; h'(t)=O\left(\frac{1}{t}\right), \; \mbox{as $t\to \infty$}\right\}.$$ 

\begin{prop}
\label{solvingceseq}
Let $\psi: [0,\infty)\to [0,\infty)$ be a smooth increasing concave function satisfying the conditions \eqref{apsicond} and \eqref{Phi_as1} and moreover that $\psi(0)=0$, $\lim_{t\to\infty} \psi(t)=\infty$. Assume that $A_\psi(\e)\neq 1$ (see \eqref{apsicond}). Then $h\in \mathcal{W}$ if and only if $h\in L^\infty(0,\infty)$ and there exists a function $g\in L^\infty(0,\infty)$ such that 
\begin{equation}
\label{cphitildeeq}
h(t)=\frac{1}{\tilde{\psi}(t)}\int_0^t g(s)\tilde \psi'(s) \mathrm{d}s \quad\mbox{for a.e. $t$}.
\end{equation}
If $h\in \mathcal{W}$ there is a unique solution $g\in L^\infty(0,\infty)$ to Equation \eqref{cphitildeeq}.
\end{prop}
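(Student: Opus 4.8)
The plan is to prove the two directions separately, treating the equation $h(t)\tilde\psi(t)=\int_0^t g(s)\tilde\psi'(s)\,\mathrm ds$ as an ODE in disguise. Write $\Phi(t):=h(t)\tilde\psi(t)=\int_0^t g(s)\tilde\psi'(s)\,\mathrm ds$. Since the right-hand side is an absolutely continuous function (because $g\in L^\infty$ and $\tilde\psi'\in L^1_{\mathrm{loc}}$, as $\tilde\psi$ is smooth increasing), differentiating gives $\Phi'(t)=g(t)\tilde\psi'(t)$ for a.e.\ $t$, i.e.
\begin{equation*}
g(t)=\frac{\Phi'(t)}{\tilde\psi'(t)}=\frac{h'(t)\tilde\psi(t)+h(t)\tilde\psi'(t)}{\tilde\psi'(t)}=h(t)+h'(t)\,\frac{\tilde\psi(t)}{\tilde\psi'(t)}.
\end{equation*}
This formula both \emph{defines} the unique candidate $g$ (establishing uniqueness once we know $h$ is differentiable a.e., which holds on $\mathcal W$ since Lipschitz functions are a.e.\ differentiable) and is the engine for the whole argument. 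For the ``only if'' direction: given $h\in\mathcal W$, set $g$ by the displayed formula; one must check $g\in L^\infty(0,\infty)$ and that $\Phi(t)=\int_0^t g\tilde\psi'$, i.e.\ that $\Phi(0)=0$ and $\Phi$ is recovered by integrating $g\tilde\psi'$. The boundedness of $g$ is where the hypothesis $A_\psi(\e)\neq 1$ enters: $\tilde\psi$ has regular variation of index $k_\psi=\log A_\psi(\e)\neq 0$, so by the Karamata-type estimate already invoked in the excerpt (\cite[Theorem 1.5.11]{RegVar}, used in the proof of Lemma \ref{integracom}), $\tilde\psi(t)/(t\tilde\psi'(t))\to 1/k_\psi$ as $t\to\infty$; combined with $h'(t)=O(1/t)$ this forces $h'(t)\tilde\psi(t)/\tilde\psi'(t)=O(1)$, and since $h\in L^\infty$ we get $g\in L^\infty$. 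Near $t=0$ one uses smoothness of $\tilde\psi$ together with $\tilde\psi(0)=0$; note $\tilde\psi'(0)>0$ since $\tilde\psi(t)=\psi(2^t-1)\approx \psi'(0)\cdot\log 2\cdot t$ for small $t$, so there is no singularity there (one may need to argue slightly more carefully that $\tilde\psi'$ stays bounded below on $(0,T]$, but this is routine). Finally $\Phi(0)=h(0)\tilde\psi(0)=0$, so the fundamental theorem of calculus gives $\int_0^t g\tilde\psi'=\int_0^t\Phi'=\Phi(t)$, which is exactly \eqref{cphitildeeq}.

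For the ``if'' direction: suppose $h\in L^\infty$ and $g\in L^\infty$ satisfy \eqref{cphitildeeq}. Then $\Phi(t)=h(t)\tilde\psi(t)$ agrees a.e.\ with the absolutely continuous function $\int_0^t g\tilde\psi'$, hence $h$ is (a.e.\ equal to) an absolutely continuous, in particular locally Lipschitz, function on $[0,\infty)$; to upgrade to globally Lipschitz we use the formula $h'=g\tilde\psi'/\tilde\psi-h\tilde\psi'/\tilde\psi$, which is valid a.e., and the regular-variation estimate $\tilde\psi'(t)/\tilde\psi(t)\to 0$ (again \cite[Theorem 1.5.11]{RegVar}, exactly as used in Lemma \ref{integracom}): since $g,h\in L^\infty$, the right-hand side is bounded, so $h'\in L^\infty$ and $h$ is globally Lipschitz, i.e.\ $h\in\mathrm{Lip}[0,\infty)$. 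Moreover $h'(t)=(g(t)-h(t))\tilde\psi'(t)/\tilde\psi(t)$, and here the hypothesis $A_\psi(\e)\neq 1$ is used again: $\tilde\psi'(t)/\tilde\psi(t)=O(1/t)$ because $\tilde\psi$ has regular variation of positive index $k_\psi$, so by Karamata $t\tilde\psi'(t)/\tilde\psi(t)\to k_\psi$; hence $h'(t)=O(1/t)$ and $h\in\mathcal W$. For the uniqueness claim: if $g_1,g_2$ both solve \eqref{cphitildeeq} for the same $h$, then $\int_0^t(g_1-g_2)\tilde\psi'=0$ for all $t$, and differentiating gives $(g_1-g_2)\tilde\psi'=0$ a.e.; since $\tilde\psi$ is smooth and strictly increasing, $\tilde\psi'>0$ a.e., forcing $g_1=g_2$ a.e.

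The main obstacle I anticipate is the careful handling of the boundedness of $g$ (resp.\ $h'$) \emph{uniformly on all of $[0,\infty)$}, not just for large $t$: the regular-variation asymptotics control the quotient $\tilde\psi(t)/\tilde\psi'(t)$ only as $t\to\infty$, so one separately needs that $\tilde\psi'$ is bounded away from $0$ and $\infty$ on each compact $[0,T]$ — this is where smoothness of $\psi$ (assumed, and justified earlier via \cite[Theorem 1.8.2]{RegVar}) and the behaviour $\tilde\psi(t)\sim(\log 2)\psi'(0^+)t$ near $0$ do the work; one should also double-check that $\psi'(0^+)>0$, which follows from $\psi$ being concave, increasing and nonconstant. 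Everything else is a bookkeeping exercise on top of the single identity $g=h+h'\tilde\psi/\tilde\psi'$ and its inverse $h'=(g-h)\tilde\psi'/\tilde\psi$, both already implicit in the computations of Lemma \ref{integracom}.
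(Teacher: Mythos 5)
Your approach is exactly the paper's: differentiate to get the pointwise identity $g = h + h'\,\tilde\psi/\tilde\psi'$, note that $\tilde\psi$ has regular variation of index $k_\psi=\log A_\psi(\e)$, and use \cite[Theorem 1.5.11]{RegVar} to get $\tilde\psi'(t)/\tilde\psi(t)=k_\psi/t+o(1/t)$, so that the condition $h'=O(\tilde\psi'/\tilde\psi)$ for large $t$ becomes $h'(t)=O(1/t)$ precisely when $k_\psi\neq0$. The ``only if'' direction ($h\in\mathcal W\Rightarrow g\in L^\infty$) is handled correctly, including near the origin: there $\tilde\psi/\tilde\psi'\to0$, so $h'\tilde\psi/\tilde\psi'$ is bounded whenever $h'$ is. The uniqueness argument is also fine.

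However, in the ``if'' direction your handling of the near-origin behaviour is not right, and the issue you flag at the end is not resolved the way you claim. You write $h'=(g-h)\,\tilde\psi'/\tilde\psi$ and assert the right-hand side is bounded ``since $g,h\in L^\infty$'' and $\tilde\psi'/\tilde\psi\to0$; but that asymptotic holds only as $t\to\infty$. Near $t=0$ one has $\tilde\psi(t)\sim (\log2)\psi'(0^+)\,t$, which gives $\tilde\psi'(t)/\tilde\psi(t)\sim 1/t\to\infty$ — the estimate you invoke goes the \emph{wrong} way for $h'$. Indeed for $g\in L^\infty$ with no continuity near $0$, $(g-h)(t)=\tilde\psi(t)^{-1}\int_0^t(g(t)-g(s))\tilde\psi'(s)\,ds$ need not be $o(1)$ as $t\to0$, so $h'$ can be unbounded near $0$ and $h$ need not be globally Lipschitz; the step ``absolutely continuous, in particular locally Lipschitz'' is also a logical slip (the implication is false in general; here local Lipschitz on $(0,\infty)$ does hold, but by the explicit derivative formula, not by absolute continuity alone). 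To be fair, the paper's own proof of this proposition passes over the same point silently; the converse implication near $t=0$ is a genuine technicality in the statement, though it is harmless for the rest of the paper since only the $h\in\mathcal W\Rightarrow g\in L^\infty$ direction is used (in Proposition \ref{gsatisfiesmean} and Lemma \ref{cnonstructionofh}). If you want to make the ``if'' direction airtight, you would either need to modify $\mathcal W$ (or the equivalence) to concern only $t\ge1$, or impose additional regularity on $g$ near $0$.
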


As remarked above, it poses no restriction to assume that $\psi$ is smooth by \cite[Theorem 1.8.2]{RegVar}. 

\begin{proof}
Uniqueness of solutions is clear. If $h$ solves Equation \eqref{cphitildeeq} then 
$$g(t) =\frac{(\tilde{\psi}\cdot h)'(t)}{\tilde \psi'(t)}= h(t)+ \frac{\tilde{\psi}(t)\cdot h'(t)}{\tilde \psi'(t)}$$
If $g\in L^\infty$, we conclude that  Equation \eqref{cphitildeeq}  has a solution $h\in L^\infty$ if and only if $h\in \mathrm{Lip}[0,\infty)$ and
$$h'(t)=O\left(\frac{\tilde \psi'(t)}{\tilde \psi(t)}\right).$$

Note that $A_\psi(\alpha)\neq 1$ for some $\alpha$ if and only if $A_\psi(\alpha)\neq 1$ for all $\alpha$. Moreover, $\tilde{\psi}$ has regular variation of index $k_\psi:=\log A_\psi(\e)$. By \cite[Theorem 1.5.11]{RegVar}, we have 
$$\frac{\tilde \psi'(t)}{\tilde \psi(t)}= \frac{k_\psi}{t}+o\left(\frac{1}{t}\right), \quad\mbox{as $t\to \infty$.}$$
In particular, if $k_\psi\neq 0$ then $h\in L^\infty(0,\infty)\cap \mathrm{Lip}[0,\infty)$ satisfies that $h'(t)=O\left(\frac{\tilde \psi'(t)}{\tilde \psi(t)}\right)$ if and only if $h\in \mathcal{W}$. 
\end{proof}

Let $C^{1,1}[1,\infty)$ denote the space of functions $h\in C^1[0,\infty)$ such $h'\in \mathrm{Lip}[0,\infty)$. For $\beta>0$, we define the space 
$$\mathcal{W}_\beta:=\left\{h\in\mathcal{W}\cap C^{1,1}[0,\infty): \; h''(t)=O(t^{-1-\beta}), \; \mbox{as $t\to \infty$}\right\}.$$ 

\begin{prop}
\label{gsatisfiesmean}
Let $h\in L^\infty[0,\infty)$, $\beta\in [0,1]$ and $\psi$ a function as in Proposition \ref{solvingceseq}. The equation \eqref{cphitildeeq} has a solution $g\in \mathrm{Lip}[0,\infty)$ with $g'(t)=O(t^{-\beta})$ if and only if $h\in \mathcal{W}_\beta$. In particular, if $h\in \mathcal{W}_\beta$ and $g$ solves the equation \eqref{cphitildeeq} then $g$ fulfils Condition \eqref{conditionongorfor}.
\end{prop}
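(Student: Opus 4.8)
The plan is to combine the explicit description of the solution of \eqref{cphitildeeq} extracted in the proof of Proposition \ref{solvingceseq} with the regular-variation asymptotics of $\tilde{\psi}$. Write $u:=\tilde{\psi}/\tilde{\psi}'$. As recalled there, a bounded function $h$ belongs to $\mathcal{W}$ precisely when \eqref{cphitildeeq} admits a solution $g\in L^\infty$; this solution is always unique, given by $g=h+u\,h'$ (equivalently $h'=(g-h)/u$), and for $h\in\mathcal{W}$ it is automatically bounded since $u\,h'=O(t)\cdot O(1/t)$. Since $h\in\mathcal{W}_\beta$ means by definition that $h\in\mathcal{W}\cap C^{1,1}[0,\infty)$ with $h''(t)=O(t^{-1-\beta})$, the whole statement — including the final ("in particular") assertion — reduces to differentiating the identity $g=h+uh'$ once and matching up $O$-bounds.

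First I would pin down the behaviour of $u$. Because $A_\psi(\e)\neq1$ we have $k_\psi=\log A_\psi(\e)>0$; moreover $\tilde{\psi}$ has regular variation of index $k_\psi$ and $\tilde{\psi}'$ of index $k_\psi-1$. Using the smooth variation theorem (\cite[Theorem 1.8.2]{RegVar}, already invoked to assume $\psi$ smooth) I would further assume $\tilde{\psi}$ smoothly varying, so that $t\tilde{\psi}'(t)/\tilde{\psi}(t)\to k_\psi$ and $t\tilde{\psi}''(t)/\tilde{\psi}'(t)\to k_\psi-1$. It follows that $u(t)\sim t/k_\psi$ — in particular $u(t)=O(t)$ and $u(t)\ge ct$ for $t$ large — and that $u'(t)=1-\tilde{\psi}(t)\tilde{\psi}''(t)/\tilde{\psi}'(t)^2\to 1/k_\psi$ is bounded. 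Differentiating $g=h+uh'$ gives the identity driving both directions:
$$g'=(1+u')\,h'+u\,h''.$$

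If $h\in\mathcal{W}_\beta$ then $h\in\mathcal{W}$, so by Proposition \ref{solvingceseq} the (locally Lipschitz) solution $g=h+uh'$ exists, and for a.e.\ $t$ the identity with $h'(t)=O(1/t)$, $h''(t)=O(t^{-1-\beta})$, $u'=O(1)$, $u(t)=O(t)$ gives, using $\beta\le1$, $g'(t)=O(t^{-1})+O(t^{-\beta})=O(t^{-\beta})$; this bound (with $\beta\ge0$) forces $g\in\mathrm{Lip}[0,\infty)$. Conversely, if \eqref{cphitildeeq} has a bounded Lipschitz solution $g$ with $g'(t)=O(t^{-\beta})$, then Proposition \ref{solvingceseq} gives $h\in\mathcal{W}$ and $g=h+uh'$, whence $h'=(g-h)/u=O(1/t)$, and solving the identity for $h''$ yields $h''=\big(g'-(1+u')h'\big)/u=O(t^{-1-\beta})$, again using $\beta\le1$ and $u(t)\ge ct$; since then $h'=(g-h)/u$ is Lipschitz (a Lipschitz function divided by $u$, which is positive, Lipschitz and bounded away from $0$ for $t\ge1$, the behaviour near $0$ being harmless), $h\in C^{1,1}[0,\infty)$, i.e.\ $h\in\mathcal{W}_\beta$. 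For the last assertion, once $h\in\mathcal{W}_\beta$ the associated solution $g$ is Lipschitz with $g'(t)=O(t^{-\beta})$, so for $t\in[n,n+1)$ we get $|g(t)-\bar g(t)|\le\int_n^{n+1}|g(t)-g(s)|\,\mathrm{d}s\le\sup_{[n,n+1]}|g'|=O(t^{-\beta})$, which is exactly Condition \eqref{conditionongorfor}.

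The only substantive point is the asymptotic control of $u$ and — especially — of $u'$, equivalently the estimate $\tilde{\psi}''(t)=O(\tilde{\psi}'(t)/t)$, for which I would rely on the smooth variation theorem; everything else is routine $O$-calculus. The one place to be careful is that the backward implication uses boundedness of the Lipschitz solution $g$, which is why the statement is to be read within the $L^\infty$-framework of Proposition \ref{solvingceseq} (when $\beta=1$ this boundedness is in fact automatic: $\varphi:=g-h=uh'$ satisfies the linear ODE $\varphi'+\varphi/u=g'$, whose solution obeys $|\varphi(t)|\lesssim t^{-k_\psi}\int^{t}|g'(s)|\,s^{k_\psi}\,\mathrm{d}s=O(1)$).
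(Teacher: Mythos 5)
Your proof is correct and follows essentially the same route as the paper: you differentiate the closed-form solution $g=h+uh'$ with $u=\tilde{\psi}/\tilde{\psi}'$, use regular variation (in the form $u(t)=O(t)$, $u'(t)=O(1)$, $\tilde{\psi}''/\tilde{\psi}'=O(1/t)$) to run the $O$-calculus in both directions, and finish with the mean value theorem for the last assertion. The only differences are cosmetic (you package the coefficients as $u$ and $u'$, whereas the paper expands them out) and that you state more carefully the need to stay in the $L^\infty$-framework of Proposition \ref{solvingceseq} for the backward implication, a point the paper leaves implicit.
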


\begin{proof}
We compute that 
$$g'(t)=2h'(t)+\frac{\tilde{\psi}(t)}{\tilde{\psi}'(t)}h''(t)-\frac{\tilde{\psi}(t)\tilde{\psi}''(t)}{\tilde{\psi}'(t)^2}h'(t).$$
Since \eqref{cphitildeeq} has a solution, $h'(t)=O(t^{-1})$ by Proposition \ref{solvingceseq}. Moreover, by the same argument as in Proposition \ref{solvingceseq}, $\frac{\tilde{\psi}(t)}{\tilde{\psi}'(t)}=O(t)$ whenever $k_\psi\neq 0$ and since $\tilde{\psi}'$ has regular variation, we can also conclude that $\frac{\tilde{\psi}''(t)}{\tilde{\psi}'(t)}=O(t^{-1})$. In particular, 
$$g'(t)=\frac{\tilde{\psi}(t)}{\tilde{\psi}'(t)}h''(t)+O(t^{-1})=O(t)h''(t)+O(t^{-1}).$$
It follows that $g'(t)=O(t^{-\beta})$ if and only if $h\in \mathcal{W}_\beta$.

Finally, the mean value theorem for integrals guarantees that for some $\xi\in [\lfloor t\rfloor, \lfloor t\rfloor+1]$, $\bar{g}(t)=g(\xi)$. The mean value theorem for derivatives guarantees that if $g$ satisfies $g'(t)=O(t^{-\beta})$ then
$$|\bar{g}(t)-g(t)|\leq \sup_{s\in [\lfloor t\rfloor,\lfloor t\rfloor+1]}|g'(s)|=O(t^{-\beta}).$$
\end{proof}

For $b> 1$, we write $\exp_b(x):=b^x$ with the obvious notation $\exp=\exp_\e$. For any translation invariant extended limit $\eta$ on $L^\infty$ we define the extended limit $\omega$ by
$$\omega(f):=\eta(f\circ \exp \circ \exp_2), \ f \in L^\infty.$$

Recall the notation $(P_\alpha f)(t)=f(t^\alpha)$ for $\alpha\ge1$. We also consider the operator $T_l : L^\infty \to L^\infty$, $(T_lf)(t)=f(t+l)$ defined for $l>0$. We say that $\eta$ is translation invariant if $\eta\circ T_l=\eta$ for all $l>0$.
For every $\alpha\geq 1$ we have
\begin{align}
\label{expinvariance}
\omega(P_\alpha f)&=\eta((P_\alpha f)\circ \exp \circ \exp_2)=\eta(\sigma^\alpha(f\circ \exp) \circ \exp_2)\\
\nonumber
&=\eta(T_{\log_2 \alpha}(f\circ \exp \circ \exp)).
\end{align}
Hence, $\omega$ is exponentiation invariant if and only if $\eta$ is translation invariant.

We define the space
$$\mathcal{E}:=\{h\in L^\infty(0,\infty) \ : \ h(t+l)-h(t)=o(1), \ t\to\infty, \; \forall l>0\}.$$
The reader should note that we have the inclusion $\mathcal{W}\subseteq \mathcal{E}$. Moreover, $E\subseteq L^\infty(0,\infty)$ is by definition a closed subspace.

\begin{prop}
\label{proponlimif}
For any $h\in \mathcal{E}$ there are translation invariant extended limits $\eta_1$ and $\eta_2$ such that 
$$\lim_{t\to\eta_1} h(t) = \liminf_{t\to \infty} h(t), \quad\mbox{and}\quad \lim_{t\to\eta_2} h(t) = \limsup_{t\to \infty} h(t).$$
\end{prop}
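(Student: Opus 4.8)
The plan is to construct $\eta_1$ (the case of $\eta_2$ is symmetric, replacing $h$ by $-h$ or $\limsup$ by $\liminf$) by combining two steps: first pick out a sequence along which $h$ realizes its $\liminf$, then upgrade the ordinary limit along that sequence to a translation invariant extended limit. Set $m:=\liminf_{t\to\infty}h(t)$ and choose a sequence $t_n\to\infty$ with $t_{n+1}>t_n+1$ and $h(t_n)\to m$. Because $h\in\mathcal{E}$, for each fixed $l>0$ we have $h(t+l)-h(t)=o(1)$, so in fact $h(t_n+l)\to m$ for every $l\ge 0$; more is true, we will want uniformity, and here I would first reduce to the case where the $o(1)$ convergence in the definition of $\mathcal{E}$ is controlled, e.g. by passing to the closed subspace structure of $\mathcal{E}$ or by noting that membership in $\mathcal{E}$ gives $h(t_n+s)\to m$ uniformly for $s$ in any compact interval $[0,L]$ (this uniformity follows from $\mathcal{E}\subseteq C_b$-type arguments after regularization, or directly since $t\mapsto h(t+s)-h(t)$ tends to $0$ and one can use a diagonal/Dini-type argument over $s$).

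The core construction is then to define a state $\eta_1$ on $L^\infty(0,\infty)$ as a limit of averages over translates based at the points $t_n$. Concretely, for $f\in L^\infty(0,\infty)$ and $L>0$ consider the averaged quantities $A_{n,L}(f):=\frac1L\int_0^L f(t_n+s)\,\mathrm{d}s$ (or a weighted/discrete variant), take an extended limit $\xi$ on the index $n$ and then an extended limit as $L\to\infty$; by an invariant-mean argument (Markov–Kakutani, or the standard construction of translation invariant means on $\R$), one arranges that the resulting functional $\eta_1$ is a state, vanishes on functions tending to $0$ at infinity — hence is an extended limit at $\infty$ — and satisfies $\eta_1\circ T_l=\eta_1$ for all $l>0$. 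The point of base-pointing the averages at the $t_n$ is that, applied to $h$ itself, the uniformity from the previous paragraph gives $A_{n,L}(h)\to m$ as $n\to\infty$ for every $L$, so $\eta_1(h)=m=\liminf_{t\to\infty}h(t)$, which is exactly the required equality.

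The main obstacle I expect is reconciling two competing demands: $\eta_1$ must be genuinely translation invariant (which forces an averaging procedure and a compactness/fixed-point argument to produce the invariant mean), yet it must simultaneously be ``concentrated'' near the $\liminf$-sequence $\{t_n\}$ so that $\eta_1(h)$ lands precisely at $m$ rather than at some intermediate value between $\liminf$ and $\limsup$. The resolution uses precisely the hypothesis $h\in\mathcal{E}$: it guarantees that translating within a bounded window does not move the value of $h$ away from $m$ along the chosen sequence, so the averaging that enforces invariance does no damage. One has to be slightly careful that the two limiting procedures (in $n$ and in $L$) can be taken so that the invariance holds exactly for \emph{all} $l>0$ simultaneously and not merely for a countable dense set; this is handled by the standard fact that a mean invariant under all rational translations, or under a generating one-parameter family, is automatically invariant under the whole group, or alternatively by applying Markov–Kakutani directly to the commuting family $\{T_l^*\}_{l>0}$ acting on the weak-$*$ compact convex set of extended limits $\eta$ with $\eta(h)=m$, which is nonempty by the sequence construction and $T_l^*$-invariant by membership in $\mathcal{E}$.
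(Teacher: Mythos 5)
Your argument is correct, and the Markov--Kakutani formulation at the end of your proposal is in fact the cleanest way to carry it out: the set $K:=\{\eta\in L^\infty(0,\infty)^* :\eta \text{ an extended limit},\ \eta(h)=m\}$ (with $m=\liminf_{t\to\infty}h$) is weak-$*$ compact and convex, nonempty (take weak-$*$ cluster points of $f\mapsto\int_0^1 f(t_n+s)\,\mathrm{d}s$ and use dominated convergence together with $h\in\mathcal{E}$ to see $\eta(h)=m$), and invariant under the commuting family $\{T_l^*\}_{l>0}$ precisely because $T_l h-h\to 0$ for $h\in\mathcal{E}$; Markov--Kakutani then hands you $\eta_1$. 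This is a genuinely different route from the paper. The paper first applies the ordinary Hahn--Banach theorem inside the subspace $\mathcal{E}$ to produce a singular state $\eta_1'\in\mathcal{E}^*$ with $\eta_1'(h)=\liminf h$, observes that translations act trivially on $\mathcal{E}$ modulo functions vanishing at infinity (so that $\eta_1'$ is automatically translation invariant on $\mathcal{E}$), and then invokes the \emph{invariant} Hahn--Banach theorem (Edwards) to extend $\eta_1'$ to a translation-invariant extended limit on all of $L^\infty(0,\infty)$. Both proofs ultimately rest on a fixed-point/amenability input; yours collapses the two steps (achieve the extremal value, enforce invariance) into a single application of Markov--Kakutani on a single compact convex set, while the paper's is more modular and leans on the invariant Hahn--Banach theorem as a black box. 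One small remark on your first, more hands-on variant with the averages $A_{n,L}$: the ``uniformity'' worry in your opening paragraph is unnecessary --- since $h$ is bounded, pointwise convergence $h(t_n+s)\to m$ for each fixed $s$ together with dominated convergence already gives $A_{n,L}(h)\to m$ for each fixed $L$, so no Dini-type argument is needed.
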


\begin{proof}
By the Hahn-Banach theorem we can find singular states $\eta_1',\eta_2'\in \mathcal{E}^*$ such that 
$$\eta_1'(h) = \liminf_{t\to \infty} h(t), \quad\mbox{and}\quad \eta_2'(h) = \limsup_{t\to \infty} h(t).$$
The action by translations preserves $\mathcal{E}$ and acts trivially modulo the closure of the compactly supported elements of $L^\infty(0,\infty)$. Therefore, the invariant Hahn-Banach theorem (see e.g. \cite[Theorem 3.3.1]{Edwards}) implies that $\eta_1',\eta_2'\in \mathcal{E}^*$ extend to translation invariant extended limits $\eta_1,\eta_2\in L^\infty(0,\infty)^*$ with $\eta_1'=\eta_1|_{\mathcal{E}}$ and $\eta_2'=\eta_2|_{\mathcal{E}}$. The proposition follows.
\end{proof}

Let us summarize the outcome of the above results on Dixmier traces. 

\begin{prop}
\label{smalldixcomp}
Let $p\in [1,\infty)$ and let $\psi: [0,\infty)\to [0,\infty)$ be an increasing concave function satisfying the conditions \eqref{apsicond} and \eqref{Phi_as1} and moreover that
$\psi(0)=0$, $\lim_{t\to\infty} \psi(t)=\infty$. Assume that $k_\psi\neq 0$. 

Let $\beta\in (0,1]$ and assume that $h\in \mathcal{W}_\beta$ satisfies that $h\circ \exp\in \mathcal{E}$ and take $c=(c_n)_{n\in\N}$ given as in \eqref{cnfromg} by
$$c_n:=(|\bar{g}(n)+C|\cdot \tilde{\psi}'(n))^{1/p},$$
where $g$ solves \eqref{cphitildeeq} and $C:=-\liminf_{t\to \infty} g(t)$. Then there are exponentiation invariant extended limits $\omega_1,\omega_2\in L^\infty(0,\infty)^*$ such that 
$$\mathrm{tr}_{\omega_j\circ \log_2,\tilde{\psi}}(|c|^p)=\begin{cases}
\liminf_{t\to \infty} h(t)-\liminf_{t\to \infty} g(t), \quad &j=1,\\
&\\
\limsup_{t\to \infty} h(t)-\liminf_{t\to \infty} g(t), \quad &j=2.\end{cases}$$

\end{prop}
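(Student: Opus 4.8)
The plan is to feed the solution $g$ of Equation \eqref{cphitildeeq} into Lemma \ref{integracom} and then choose the exponentiation invariant extended limit so that the limit of $h$ which appears realizes precisely the $\liminf$ (resp.\ $\limsup$) of $h$ at infinity.

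First I would record that all the auxiliary hypotheses are in force. Since $k_\psi=\log A_\psi(\e)\neq 0$, Proposition \ref{solvingceseq} produces a unique $g\in L^\infty(0,\infty)$ solving \eqref{cphitildeeq}; since moreover $h\in\mathcal{W}_\beta$ with $\beta\in(0,1]$, Proposition \ref{gsatisfiesmean} gives $g\in\mathrm{Lip}[0,\infty)$ with $g'(t)=O(t^{-\beta})$, so $g$ satisfies Condition \eqref{conditionongorfor}; finally $c=|c|\in\mathfrak{m}^{(p)}_{\tilde\psi}(\N)$ as recorded after \eqref{cnfromg}. With these in hand, Lemma \ref{integracom} applies and yields, for \emph{every} exponentiation invariant extended limit $\omega$,
$$\mathrm{tr}_{\omega\circ\log_2,\tilde\psi}(|c|^p)=\lim_{t\to\omega\circ\log_2}\frac{1}{\tilde\psi(t)}\int_0^t g(s)\,\tilde\psi'(s)\,\mathrm{d}s+C=\lim_{t\to\omega\circ\log_2}h(t)-\liminf_{t\to\infty}g(t),$$
where the last equality uses \eqref{cphitildeeq} to identify the integral average with $h(t)$ and the definition $C=-\liminf g$. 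Thus the entire statement reduces to exhibiting exponentiation invariant extended limits $\omega_1,\omega_2$ with $\lim_{t\to\omega_1\circ\log_2}h(t)=\liminf_{t\to\infty}h(t)$ and $\lim_{t\to\omega_2\circ\log_2}h(t)=\limsup_{t\to\infty}h(t)$.

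To build these I would pass through the correspondence $\eta\mapsto\omega$, $\omega(f):=\eta(f\circ\exp\circ\exp_2)$, which by \eqref{expinvariance} sends a translation invariant $\eta$ to an exponentiation invariant $\omega$. Unwinding, $\lim_{t\to\omega\circ\log_2}h(t)=\eta\big(h\circ\log_2\circ\exp\circ\exp_2\big)$, and since $\log_2\circ\exp\circ\exp_2(t)=2^t/\ln 2=2^{t+l}$ with $l:=-\log_2\ln 2>0$, this function equals $T_l(h\circ\exp_2)$; translation invariance of $\eta$ then gives $\lim_{t\to\omega\circ\log_2}h(t)=\eta(h\circ\exp_2)$. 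Now $h\circ\exp_2(t)=(h\circ\exp)(t\ln 2)$ is a multiplicative rescaling of the argument of $h\circ\exp$, and $\mathcal{E}$ is stable under such rescalings, so the hypothesis $h\circ\exp\in\mathcal{E}$ forces $h\circ\exp_2\in\mathcal{E}$; moreover $t\mapsto 2^t$ being a diffeomorphism of $\R$ onto $(0,\infty)$ gives $\liminf_{t\to\infty}(h\circ\exp_2)(t)=\liminf_{t\to\infty}h(t)$ and likewise for $\limsup$. Applying Proposition \ref{proponlimif} to $h\circ\exp_2$ furnishes translation invariant $\eta_1,\eta_2$ with $\eta_1(h\circ\exp_2)=\liminf h$ and $\eta_2(h\circ\exp_2)=\limsup h$; setting $\omega_j(f):=\eta_j(f\circ\exp\circ\exp_2)$ and substituting into the displayed identity of the previous paragraph finishes the argument.

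The hard part is not conceptual but the careful bookkeeping of the three substitutions $\log_2,\exp,\exp_2$ together with the rescaling by $\ln 2$: one must check that the residual shift $l=-\log_2\ln 2$ is genuinely positive, so that translation invariance of $\eta$ legitimately applies, and that $\mathcal{E}$ — defined by additive shifts of the argument — is preserved under the multiplicative change of variable converting $h\circ\exp$ into $h\circ\exp_2$. The only place where the hypothesis $k_\psi\neq 0$ is truly used is in invoking Propositions \ref{solvingceseq}, \ref{gsatisfiesmean} and Lemma \ref{integracom}, each of which needs $\tilde\psi$ to be regularly varying of nonzero index in order to control $g$.
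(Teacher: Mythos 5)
Your proposal is correct and follows essentially the same strategy as the paper: feed the solution $g$ of \eqref{cphitildeeq} into Lemma \ref{integracom} (using Propositions \ref{solvingceseq} and \ref{gsatisfiesmean} to justify the hypotheses), and then manufacture the required $\omega_j$ from translation invariant $\eta_j$ via $\omega_j(f)=\eta_j(f\circ\exp\circ\exp_2)$ and Proposition \ref{proponlimif}. In fact your bookkeeping is slightly more careful than the paper's at one spot: the paper applies Proposition \ref{proponlimif} to $h\circ\exp$ and then asserts $\omega_j\circ\log_2=\eta_j\circ\exp$, but as you correctly compute, precomposing with $\log_2\circ\exp\circ\exp_2$ gives $t\mapsto 2^{t+l}$, $l=-\log_2\ln 2>0$, so modulo translation invariance one obtains $\eta_j\circ\exp_2$ rather than $\eta_j\circ\exp$. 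Your fix — observe that $\mathcal{E}$ is preserved under multiplicative rescaling of the argument, hence $h\circ\exp_2\in\mathcal{E}$, and apply Proposition \ref{proponlimif} to $h\circ\exp_2$ — is exactly the right way to repair this imprecision (an alternative repair would be to define $\omega_j:=\eta_j\circ\exp_2\circ\exp$ in the opposite order, which makes $\omega_j\circ\log_2=\eta_j\circ\exp$ hold exactly).
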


\begin{proof}
We note that $g$ exists by Proposition \ref{solvingceseq}. Since $h\in \mathcal{W}_\beta$ for a $\beta>0$, $g$ satisfies that $\bar{g}(t)-g(t)=O(t^{-\beta})$ by Proposition \ref{gsatisfiesmean}. By Lemma \ref{integracom}, for any exponentiation invariant extended limit $\omega$, 
\begin{equation}
\label{transdmdmda}
\mathrm{tr}_{\omega\circ \log_2,\tilde{\psi}}(|c|^p)=\lim_{t\to\omega\circ \log_2} h(t)+C.
\end{equation}
Since $h\circ \exp\in \mathcal{E}$ we can take translation invariant extended limits $\eta_1$ and $\eta_2$ as in Proposition \ref{proponlimif} such that 
\begin{align}
\label{transdmdmdatwo}
\lim_{t\to\eta_1\circ \exp} h(t) &= \liminf_{t\to \infty} h(\e^t)=\liminf_{t\to \infty} h(t), \quad\mbox{and}\\
\nonumber
\lim_{t\to\eta_2\circ \exp} h(t) &= \limsup_{t\to \infty} h(\e^t)=\limsup_{t\to \infty} h(t).
\end{align}
Define the extended limits $\omega_j:=\eta_j\circ \exp\circ\exp_2$, $j=1,2$, which are exponentiation invariant because $\eta_1$ and $\eta_2$ are translation invariant. We conclude the proposition from combining the two statements \eqref{transdmdmda} and \eqref{transdmdmdatwo} with the fact that $\omega_j\circ \log_2=\eta_j\circ \exp$.
\end{proof}

\begin{lem}
\label{cnonstructionofh}
Let $\psi: [0,\infty)\to [0,\infty)$ be an increasing concave function satisfying the conditions \eqref{apsicond} and \eqref{Phi_as1} and such that
$\psi(0)=0$, $\lim_{t\to\infty} \psi(t)=\infty$, $k_\psi\neq 0$.
For every $h_0\in C^{1,1}[0,\infty)$ such that $h_0, h_0', h_0''\in L^\infty(0,\infty)$, the function 
$$h(t):=h_0(\log(1+\log (1+t))), \ t>0$$
belongs to $\mathcal{W}_1$ and satisfy the following:
\begin{itemize}
\item $h\circ \exp\in \mathcal{E}$,
\item $\liminf_{t\to\infty} h(t)=\liminf_{t\to \infty} g(t)$. 
\end{itemize}
Here $g$ denotes the solution to \eqref{cphitildeeq}. 

Moreover, the function $h_0$ is not convergent as $t\to\infty$
if and only if 
$$\limsup_{t\to\infty} h(t)>\liminf_{t\to \infty} g(t).$$
\end{lem}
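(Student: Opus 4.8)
The plan is to write $h=h_0\circ\phi$ where $\phi(t):=\log(1+\log(1+t))$, a smooth, strictly increasing function on $[0,\infty)$ with $\phi(0)=0$ and $\phi(t)\to\infty$. First I would record the asymptotics obtained by differentiating, namely $\phi'(t)=\frac{1}{(1+t)(1+\log(1+t))}$, which is $O(1/(t\log t))$, and $\phi''(t)=O(1/(t^2\log t))$ as $t\to\infty$. Feeding these into the chain rule and using that $h_0,h_0',h_0''$ are bounded, one reads off that $h\in L^\infty(0,\infty)$, that $h'(t)=h_0'(\phi(t))\phi'(t)=O(1/t)$ with $h'$ bounded on all of $[0,\infty)$ (so $h$ is globally Lipschitz and hence lies in $\mathcal{W}$), that $h\in C^{1,1}[0,\infty)$, and that $h''(t)=h_0''(\phi(t))\phi'(t)^2+h_0'(\phi(t))\phi''(t)=O(t^{-2})$. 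This last estimate is precisely $h''(t)=O(t^{-1-\beta})$ with $\beta=1$, which gives $h\in\mathcal{W}_1$.

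Next I would verify that $h\circ\exp\in\mathcal{E}$. Setting $\tilde\phi(t):=\phi(\e^t)$, one has $\tilde\phi'(t)=\e^t\phi'(\e^t)=\frac{\e^t}{(1+\e^t)(1+\log(1+\e^t))}\to 0$ as $t\to\infty$. Then for each fixed $l>0$, the mean value theorem applied first to the Lipschitz function $h_0$ and then to $\tilde\phi$ yields $|(h\circ\exp)(t+l)-(h\circ\exp)(t)|\le\|h_0'\|_\infty\,|\tilde\phi(t+l)-\tilde\phi(t)|\le l\,\|h_0'\|_\infty\sup_{s\in[t,t+l]}|\tilde\phi'(s)|$, which tends to $0$; this is the defining property of $\mathcal{E}$.

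For the second bullet I would invoke Proposition \ref{solvingceseq}: since $h\in\mathcal{W}$, equation \eqref{cphitildeeq} has the unique $L^\infty$ solution $g(t)=h(t)+\tilde\psi(t)h'(t)/\tilde\psi'(t)$. Because $A_\psi(\e)\neq 1$, that is $k_\psi\neq 0$, the regularly varying function $\tilde\psi$ satisfies $\tilde\psi'(t)/\tilde\psi(t)=k_\psi/t+o(1/t)$ by \cite[Theorem 1.5.11]{RegVar}, so $\tilde\psi(t)/\tilde\psi'(t)=O(t)$. Substituting $h'(t)=h_0'(\phi(t))\phi'(t)$ and the bound $\phi'(t)=O(1/(t\log t))$, the factor $t$ coming from $\tilde\psi/\tilde\psi'$ cancels the $(1+t)$ in the denominator of $\phi'$, leaving $\tilde\psi(t)h'(t)/\tilde\psi'(t)=O(1/\log t)\to 0$. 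Hence $g(t)-h(t)\to 0$, which gives at once $\liminf_{t\to\infty}g(t)=\liminf_{t\to\infty}h(t)$ and $\limsup_{t\to\infty}g(t)=\limsup_{t\to\infty}h(t)$; the first is the asserted identity. I expect this comparison to be the crux of the proof: it is precisely why an iterated logarithm, rather than a single one, appears in the definition of $h$, since with a single logarithm $\tilde\psi h'/\tilde\psi'$ is only $O(1)$ and the limit inferiors need not coincide; it is also the one place where $k_\psi\neq 0$ is used (through $\tilde\psi/\tilde\psi'=O(t)$), while the remaining steps are routine estimates.

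Finally, for the equivalence I would use that $\phi$ is continuous, strictly increasing and tends to $\infty$, so that for every $T$ the image $\phi([T,\infty))=[\phi(T),\infty)$ is a neighbourhood of $+\infty$; consequently $\limsup_{t\to\infty}h(t)=\limsup_{s\to\infty}h_0(s)$ and $\liminf_{t\to\infty}h(t)=\liminf_{s\to\infty}h_0(s)$. Combining this with the second bullet, the inequality $\limsup_{t\to\infty}h(t)>\liminf_{t\to\infty}g(t)$ is equivalent to $\limsup_{s\to\infty}h_0(s)>\liminf_{s\to\infty}h_0(s)$, and since $h_0$ is bounded this holds if and only if $h_0$ fails to converge as $t\to\infty$.
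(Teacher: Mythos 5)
Your proposal is correct and follows essentially the same line of argument as the paper: the chain rule estimates give $h\in\mathcal{W}_1$, $g(t)-h(t)=\tilde\psi(t)h'(t)/\tilde\psi'(t)\to 0$ thanks to the iterated logarithm and $k_\psi\neq 0$, and the final equivalence follows since $\phi$ is an increasing bijection onto $[0,\infty)$. The only minor difference is that the paper deduces $h\circ\exp\in\mathcal{E}$ by showing $(h\circ\exp)'=O(1/t)$ and using $\mathcal{W}\subseteq\mathcal{E}$, whereas you verify the defining property of $\mathcal{E}$ directly via the mean value theorem -- the same underlying estimate, packaged slightly differently.
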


\begin{proof}
Since 

$$h'(t)=h_0'(\log(1+\log (1+t)))\frac{1}{\log (1+t)}\frac{1}{1+t},$$
we have that $h\in \mathcal{W}$. Moreover, 
\begin{align*}
h''(t)&=h_0''(\log(1+\log (1+t)))\frac{1}{(\log (1+t))^2}\frac{1}{(1+t)^2}\\
&-h_0'(\log(1+\log (1+t))\frac{1}{(1+t)^2}\left(\frac{1}{(\log (1+t))^2}+\frac{1}{\log (1+t)}\right)=O(t^{-2}),
\end{align*}
so $h\in \mathcal{W}_1$.

Since $$(h\circ \exp)'(t)=h_0'(\log(1+\log (1+\e^t)))\frac{1}{1+\log (1+\e^t)}\frac{\e^t}{1+\e^t}=O(\frac{1}{t}),$$ it follows that $h\circ \exp\in \mathcal{W}\subseteq \mathcal{E}$. Solving equation \eqref{cphitildeeq} for $g$, we obtain
\begin{align*}
g(t) &=\frac{(\tilde{\psi}\cdot h)'(t)}{\tilde \psi'(t)}=h(t)+\frac{\tilde{\psi}(t)}{\tilde \psi'(t)}h'(t).
\end{align*}
Since $k_\psi\neq 0$, we have $\frac{\tilde{\psi}(t)}{\tilde \psi'(t)}=O(t)$. Thus, the fact that $h'(t)=o(t^{-1})$ implies that $g(t)=h(t)+o(1)$. Therefore 
$$\liminf_{t\to\infty} g(t)=\liminf_{t\to\infty} h(t)=\liminf_{t\to \infty} h_0(t).$$ 
It is clear that
$$\limsup_{t\to\infty} h(t)=\limsup_{t\to\infty} h_0(t).$$
We can conclude that the function $h_0$ is not convergent as $t\to\infty$ if and only if $\limsup_{t\to\infty} h(t)>\liminf_{t\to \infty} g(t)$.
\end{proof}

\begin{thm}
\label{nonmeasthm}
Let $p\in [1,\infty)$, $\psi: [0,\infty)\to [0,\infty)$ be as in Proposition \ref{smalldixcomp} and assume that $h_0\in C^{1,1}[0,\infty)$ is such that $h_0, h_0', h_0''\in L^\infty(0,\infty)$ and the function $h_0$ is not convergent as $t\to\infty$. 
Define the holomorphic function 
$$f(z):=\sum_{n=0}^\infty 2^{-n/p}c_n z^{2^n},$$
where $c=(c_n)_{n\in\N}$ is given as in \eqref{cnfromg} from the solution $g$ to \eqref{cphitildeeq} for $h(t):=h_0(\log(1+\log(1+t)))$. Then $f\in B_{p,\psi}(S^1)$ and moreover $|H_f|^p\in \mathcal{M}_\psi$ is non-measurable. More precisely, $f$ satisfies that there are exponentiation invariant extended limits $\omega_1$ and $\omega_2$ such that 
$$\mathrm{Tr}_{\omega_1,\psi}(|H_{\overline{f}}|^p)=0\quad \mbox{and}\quad \mathrm{Tr}_{\omega_2,\psi}(|H_{\overline{f}}|^p)>0.$$
\end{thm}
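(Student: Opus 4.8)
The plan is to assemble the machinery developed in this section into a direct verification of the three claims: membership $f\in B_{p,\psi}(S^1)$, existence of the two distinguished exponentiation invariant extended limits with different Dixmier trace values, and the resulting non-measurability.

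\textbf{Step 1: Membership and reduction to the sequence side.} First I would invoke Lemma \ref{cnonstructionofh}: since $h_0\in C^{1,1}[0,\infty)$ with $h_0,h_0',h_0''$ bounded, the function $h(t)=h_0(\log(1+\log(1+t)))$ lies in $\mathcal{W}_1$, satisfies $h\circ\exp\in\mathcal{E}$, and the solution $g$ to \eqref{cphitildeeq} (which exists by Proposition \ref{solvingceseq} as $k_\psi\neq 0$ and $h\in\mathcal{W}\subseteq L^\infty$) obeys $\liminf_{t\to\infty}g(t)=\liminf_{t\to\infty}h(t)$. With $c_n=(|\bar g(n)+C|\tilde\psi'(n))^{1/p}$ and $C=-\liminf_{t\to\infty}g(t)$, we have $c=|c|\in\mathfrak{m}^{(p)}_{\tilde\psi}(\N)$ as noted right after \eqref{cnfromg}. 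The map $\gamma:\mathfrak{m}^{(p)}_{\tilde\psi}(\N)\to B_{p,\psi}(S^1)$ is continuous, hence $f=\gamma_{p,c}\in B_{p,\psi}(S^1)$; since $B_{p,\psi}\subseteq \cap_{q>p}B^{1/q}_{q,q}(S^1)$, the Dixmier trace estimates of Corollary \ref{equivbesovgeneerl} (equivalently \eqref{estofrtrTr}) apply, giving $\mathrm{Tr}_{\omega,\psi}(|H_{\overline f}|^p)\sim \mathrm{tr}_{\omega\circ\log_2,\tilde\psi}(|c|^p)$ for every exponentiation invariant extended limit $\omega$, with two-sided universal constants $\alpha_0,\alpha_1>0$.

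\textbf{Step 2: Producing the two extended limits.} Here I would apply Proposition \ref{smalldixcomp} with $\beta=1$: because $h\in\mathcal{W}_1$ and $h\circ\exp\in\mathcal{E}$, there exist exponentiation invariant extended limits $\omega_1,\omega_2\in L^\infty(0,\infty)^*$ with
\[
\mathrm{tr}_{\omega_j\circ\log_2,\tilde\psi}(|c|^p)=\begin{cases}\liminf_{t\to\infty}h(t)-\liminf_{t\to\infty}g(t), & j=1,\\[2pt]\limsup_{t\to\infty}h(t)-\liminf_{t\to\infty}g(t), & j=2.\end{cases}
\]
By Lemma \ref{cnonstructionofh}, $\liminf_{t\to\infty}h(t)=\liminf_{t\to\infty}g(t)$, so the $j=1$ value is $0$; and since $h_0$ fails to converge, $\limsup_{t\to\infty}h(t)>\liminf_{t\to\infty}g(t)$, so the $j=2$ value is strictly positive. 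Feeding these into \eqref{estofrtrTr} yields $\alpha_0\cdot 0\le \mathrm{Tr}_{\omega_1,\psi}(|H_{\overline f}|^p)\le\alpha_1\cdot 0$, hence $\mathrm{Tr}_{\omega_1,\psi}(|H_{\overline f}|^p)=0$, while $\mathrm{Tr}_{\omega_2,\psi}(|H_{\overline f}|^p)\ge\alpha_0(\limsup h-\liminf g)>0$.

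\textbf{Step 3: Conclusion.} Since two Dixmier traces associated to exponentiation invariant extended limits take different values (indeed $0$ versus a positive number) on the positive operator $|H_{\overline f}|^p\in\mathcal{M}_\psi$, the operator is by definition non-measurable, and $|H_{\overline f}|^p\in\mathcal{M}_\psi$ follows from $f\in B_{p,\psi}(S^1)$ via Corollary \ref{equivbesovgeneerl} (or Theorem \ref{Thm1}). The main obstacle is not in this final assembly — which is essentially bookkeeping — but in having correctly set up the chain of intermediate results; in particular one should double-check that the hypotheses of Proposition \ref{smalldixcomp} are met verbatim (the requirement $h\circ\exp\in\mathcal{E}$, and $h\in\mathcal{W}_\beta$ for some $\beta\in(0,1]$), both of which are supplied precisely by Lemma \ref{cnonstructionofh}, and that the two-sided nature of the estimate \eqref{estofrtrTr} is genuinely needed only to upgrade ``$\mathrm{tr}=0$'' to ``$\mathrm{Tr}=0$'' and ``$\mathrm{tr}>0$'' to ``$\mathrm{Tr}>0$'', which it does.
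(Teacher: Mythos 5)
Your proof is correct and follows essentially the same route as the paper: invoke Lemma \ref{cnonstructionofh} to verify the hypotheses of Proposition \ref{smalldixcomp}, produce $\omega_1,\omega_2$ from that proposition with the two stated values of $\mathrm{tr}_{\omega_j\circ\log_2,\tilde\psi}(|c|^p)$, then transfer to $\mathrm{Tr}_{\omega_j,\psi}(|H_{\overline f}|^p)$ via positivity of the Dixmier trace together with the two-sided estimate \eqref{estofrtrTr}. Your Step 1 is somewhat more explicit than the paper's about why $f\in B_{p,\psi}(S^1)$ (continuity of $\gamma$ and $c\in\mathfrak{m}^{(p)}_{\tilde\psi}(\N)$), but this had already been established in the surrounding text and is the same reasoning.
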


\begin{proof}
By Lemma \ref{cnonstructionofh} $h\in \mathcal{W}_1$ satisfies that $h\circ \exp\in \mathcal{E}$  and $g$ satisfies that $\liminf_{t\to\infty} h(t)=\liminf_{t\to \infty} g(t)$ and $\limsup_{t\to\infty} h(t)>\liminf_{t\to \infty} g(t)$. It follows from Proposition \ref{smalldixcomp} that there are exponentiation invariant extended limits $\omega_1,\omega_2\in L^\infty(0,\infty)^*$ such that
$$\mathrm{tr}_{\omega_1\circ \log_2,\tilde{\psi}}(|c|^p)=0\quad\mbox{and}\quad \mathrm{tr}_{\omega_2\circ \log_2,\tilde{\psi}}(|c|^p)>0.$$

By positivity of the Dixmier trace and \eqref{estofrtrTr}, we have that 
\begin{align*}
0\leq &\mathrm{Tr}_{\omega_1,\psi}(|H_{\overline{f}}|^p)\leq \alpha_1\mathrm{tr}_{\omega_1\circ \log_2,\tilde{\psi}}(|c|^p)=0\quad\mbox{and}\\
&\mathrm{Tr}_{\omega_2,\psi}(|H_{\overline{f}}|^p)\geq \alpha_0\mathrm{tr}_{\omega_2\circ \log_2,\tilde{\psi}}(|c|^p)>0.
\end{align*}
\end{proof}

\begin{rem}
Theorem \ref{nonmeasthm} extends \cite[Theorem 4]{EZ} to general $p$ and general $\psi$ with $k_\psi\neq 0$. Our proof is longer. The length is not just due to the reason that we are in a more general setting. The reason for the length of the proof is two-fold. Firstly, we wanted to better understand the mechanism that creates non-measurability in terms of functions $h$ as in Proposition \ref{smalldixcomp}. Secondly, we wanted to improve the construction of the two exponentiation invariant extended limits $\omega_0$ and $\omega_1$ that realizes the non-measurability as is done in Proposition \ref{smalldixcomp}. 

The construction in the proof of \cite[Theorem 4]{EZ} starts from an extended limit $\eta\in \ell^\infty(\N)^*$ and is used to construct two different extended limits $\omega_1$ and $\omega_2$ on $L^\infty(0,\infty)$. The process of going from sequences to function is delicate when it comes to extended limits. In \cite{EZ}, starting from a translation invariant extended limit $\eta\in \ell^\infty(\N)^*$ and the mapping $b_j:\N\to \R_+$, $b_j(n):=a^{(2k+j)\pi}$ for an $a>1$ and $j=1,2$, Engli\v{s}-Zhang \cite{EZ} defined extended limits
$$\omega_j(f):=\eta(((M(f\circ \exp))\circ b_j), \quad \mbox{for $f\in L^\infty(0,\infty)$}.$$
Here $M$ denotes the logarithmic Cesaro mean. Since $\eta$ is only invariant for translations by natural numbers, a computation as in Equation \eqref{expinvariance} shows that $\omega_j$ need only satisfy $\omega_j\circ P_\alpha=\omega_j$ for $\alpha$ in the multiplicative subgroup of $\R_+$ generated by $a^2$. To our knowledge, one needs full exponentiation invariance in order for a relation as in Theorem \ref{allthethingsGSsaid} part i) to hold. It is unclear to us how the conclusion \cite[Theorem 4]{EZ} is reached from only knowing invariance with respect to $P_{a^2}$. Proposition \ref{smalldixcomp} above circumvents this problem. 
\end{rem}

\end{document}